\theoremstyle{plain}
\newtheorem{theorem}{Theorem}[section]
\newtheorem{lemma}[theorem]{Lemma}
\newtheorem{prop}[theorem]{Proposition}
\newtheorem{remark}[theorem]{Remark}
\theoremstyle{definition}
\theoremstyle{remark}
\numberwithin{equation}{section}
\newcommand{\RR}{\mathbb{R}}
\newcommand{\NN}{\mathbb{N}}
\newcommand{\II}{I\hspace{-0.2em}I}
\newcommand{\llc}{\,\text{\Large{\reflectbox{$\lrcorner$}}}}
\renewcommand{\d}{\partial}
\renewcommand{\div}{\,\mathrm{div}}
\newcommand{\dint}{\,\mathrm{d}}
\newcommand{\veps}{\varepsilon}
\newcommand{\nn}{\nonumber}
\newcommand{\wto}{\rightharpoonup}
\newcommand{\lara}[1]{\langle #1 \rangle}
\newcommand{\sbs}{\subseteq}
\newcommand{\id}{\mathrm{id}}
\newcommand{\loc}{\mathrm{loc}}
\newcommand{\supp}{\mathrm{supp}}
\newcommand{\cA}{\mathcal{A}}
\newcommand{\dist}{\mathrm{dist}}
\newcommand{\diam}{\mathrm{diam}}
\newcommand{\Radon}{\mathcal{M}}
\newcommand{\Haus}{\mathcal{H}}
\newcommand{\MM}{\mathbb{M}}
\newcommand{\curr}[1]{[\![ #1 ]\!]}
\newcommand{\encvol}{\,\mathrm{encvol}}
\newcommand{\one}{\mathrm{1}} 
\newcommand{\two}{\mathrm{2}} 
\newcommand{\CH}{\textup{CH}}
\newcommand{\E}{\mathcal{E}}
\newcommand{\F}{\mathcal{F}}
\newcommand{\FCH}{\F_\CH}
\newcommand{\FCHN}{\F_{\CH,N}}
\newcommand{\ECH}{\E_\CH} 
\newcommand{\Will}{\mathcal{W}}
    \let\TeXchi\chi
\newbox\chibox
\chibox \hbox{\raise\dp0 \box 0 }
\def\chi{\copy\chibox}
\title[Existence of varifold minimizers]{Existence of varifold minimizers for the \\ multiphase Canham-Helfrich functional}
\author[K.\,Brazda]
{Katharina Brazda}
\address[K.\,Brazda]{Faculty of Mathematics, University of Vienna, Oskar-Morgenstern-Platz 1,1090 Wien, Austria}
\email{katharina.brazda@univie.ac.at}
\author[L.\,Lussardi]
{Luca Lussardi}
\address[L.\,Lussardi]{Dipartimento di Scienze Matematiche ``G.L.\,Lagrange'', Politecnico di Torino, c.so Duca degli Abruzzi 24, I-10129 Torino, Italy}
\email[]{luca.lussardi@polito.it}
\author[U.\,Stefanelli]
{Ulisse Stefanelli}
\address[U.\,Stefanelli]{Faculty of Mathematics, University of Vienna, Oskar-Morgenstern-Platz 1,1090 Wien, Austria, Vienna Research Platform on Accelerating
  Photoreaction Discovery, University of Vienna, W\"ahringerstra\ss e 17, 1090 Wien, Austria, and Istituto di Matematica Applicata e Tecnologie Informatiche ``E.\,Magenes'', via Ferrata 1, 27100 Pavia, Italy.}
\email[]{ulisse.stefanelli@univie.ac.at}
\begin{document}

\baselineskip3.4ex

\vspace{0.5cm}
\begin{abstract}
We address the minimization of the Canham-Helfrich functional in presence of multiple phases. The problem is inspired by the modelization of heterogeneous biological membranes, which may feature variable bending rigidities and spontaneous curvatures. With respect to previous contributions, no symmetry of the minimizers is here assumed. Correspondingly, the problem is reformulated and solved in the weaker frame of oriented curvature varifolds. We present a lower semicontinuity result and prove existence of single- and multiphase minimizers under area and enclosed-volume constrains. Additionally, we discuss regularity of minimizers and establish lower and upper diameter bounds. 
{\small 
\vskip .3truecm
\noindent Keywords: Canham-Helfrich functional, curvature
varifolds, lower semicontinuity, biological membranes.
\vskip.1truecm
\noindent 2010 Mathematics Subject Classification: 
49Q10, 
49Q20, 
49J45, 
53C80, 
92C10. 

}
\end{abstract}

\maketitle

\section{Introduction} 

The equilibrium shape of a biological membrane is described by the classical {\sc Canham} \cite{Canham:70} \& {\sc Helfrich} \cite{Helfrich:73} theory as the result of the constrained minimization of its bending energy. Given a smooth closed surface $M$ in $\RR^3$, its {\it Canham-Helfrich energy} reads 
\begin{equation}\label{CH}
\ECH(M)=\int_M \left(\frac{\beta}{2}\,(H-H_0)^2+\gamma\, K\right)\dint\Haus^2
\end{equation}
where $\beta,\gamma$, and $H_0\in \RR$ are given material constants, $H$ is the mean curvature of $M$, and $K$ is the Gauss  curvature of $M$. The minimization is constrained to surfaces of fixed area and fixed enclosed volume. The preservation of area reflects the inextensibility of the membrane material and the volume constraint arises from the osmotic pressure balance between the vesicle and its surroundings \cite{Boal}. In contrast to the classical {\it Willmore energy} \cite{KuwertSchaetzle:12,Riviere:08,Schygulla:12,Simon:93} 
\begin{equation}\label{Willmore}
\Will(M)=\frac{1}{4}\int_MH^2\dint\Haus^2,
\end{equation}
the Canham-Helfrich energy \eqref{CH} for $H_0\neq 0$ implicitly depends on the orientation of $M$ as well, as its choice determines the sign of $H$. 

Biomembranes consist of single or double layers of lipids and frequently also contain other molecules. For example, cell membranes are lipid bilayers with embedded cholesterols and proteins. The molecule concentration locally alters the mechanical properties of the membrane by changing the bending rigidities $\beta,\gamma$ and the spontaneous curvature  $H_0$ \cite{Helfrich:73}.  Thereby, the composition of the membrane affects its shape.  Moreover, since lipid membranes behave as two-dimensional fluids \cite{SingerNicolson:72}, the constituents are not fixed within the membrane but rather can migrate due to surface diffusion and electrochemical dynamics. The result is an interplay between molecule concentrations, influencing the local membrane geometry, and geometry, influencing concentrations \cite{Boal, McMahon:05}.

In specific situations, biomembranes with well-separated domains of characteristic composition are observed \cite{Baumgart:03}. In the case of two of such {\it phases}, one can model the membrane as the union of compact surfaces  $M^{\one}$ and $M^{\two}$ in $\RR^3$ that overlap precisely at their boundary. The associated energy is the sum of the bending energies \eqref{CH} of the individual phases, augmented by an additional {\it line tension} contribution penalizing phase interfaces \cite{JuelicherLipowski:96}. The membrane morphology is hence described by the minimization of the {\it sharp-interface, two-phase Canham-Helfrich energy} 
\begin{equation}\label{CH2}
\E_{\CH,2}(M^{\one}, M^{\two})=\sum_{i=\one,\two}\int_{M^i}
\left(\frac{\beta^i}{2}\,(H-H_0^i)^2+\gamma^i\, K\right)\dint\Haus^2 
+\sigma\int_{\Gamma^{\one,\two}}\dint \Haus^1.
\end{equation}
Here, $\beta^i$, $\gamma^i$, and $H_0^i$ are the material parameters of the individual phases $M^i$ ($i=\one,\two$), $\sigma>0$ is the line-tension coefficient, and the phase boundary $\Gamma^{\one,\two}= M^{\one}\cap M^{\two}$ is assumed to be a union of closed curves. By assuming $\d M=\emptyset$ one has that the two-phase membrane $M=M^{\one}\cup M^{\two}$ is closed. 

In addition to the constraint on the volume enclosed by $M$, one fixes the areas of the individual phases $M^{\one}$ and $M^{\two}$. This two-phase model can be extended to more phases by letting $M=\bigcup_{i=1}^NM^i$, adding all individual Canham-Helfrich contributions, and penalizing all interfaces.

The focus of this paper is on investigating existence of equilibrium solutions associated to the multiphase Canham-Helfrich functional \eqref{CH2} under enclosed-volume and phase-area constraints. This variational problem has already been addressed by {\sc Choksi, Morandotti, \& Veneroni} \cite{ChoksiVeneroni:13,ChoksiMorandottiVeneroni:13}, who nevertheless  focused on  the case of {\it axisymmetric} membranes and phases. This assumption allowed them to reformulate the problem in terms of generating curves in the plane, a fact that is crucially exploited in the analysis. Still, by imposing rotational symmetry, many biologically relevant shapes  may be  ruled out \cite{Baumgart:03}. 

Our goal is hence to  remove this restriction and to study multiphase minimizers without a-priori  assumptions on their  symmetry. The price to pay for such generality is that the strong formulation \eqref{CH2} has to be relaxed. In particular, we resort in rephrasing the problem in the general framework of {\it curvature varifolds} \cite{Hutchinson:86, Mantegazza:96}, which are a geometric measure theory generalization of surfaces \cite{Allard:72,Simon:83}. This asks for defining a corresponding {\it generalized Canham-Helfrich functional}, see \eqref{eq:FCH} below. Such generalization is not  entirely new. In  fact, the minimization of geometric functionals involving curvature was {\sc Hutchinson}'s \cite{Hutchinson:86} motivation to introduce curvature varifolds in the first place. Notice that the Willmore functional \eqref{Willmore} is  readily  defined for integral varifolds with locally bounded first variation in the sense of {\sc Allard} \cite{Allard:72}, because its integrand can be expressed solely in terms of the mean curvature vector. However, the density of the Canham-Helfrich functional \eqref{CH} with $\gamma\neq 0$ contains more components of the second fundamental form and, with $H_0\neq 0$, also depends on the orientation. For these reasons and in view of the presence of interfaces in the multiphase energy \eqref{CH2}, we combine here Hutchinson's \cite{Hutchinson:86} oriented varifolds and  {\sc Mantegazza}'s \cite{Mantegazza:96} curvature varifolds with boundary, formulating the variational problem in the setting of {\it oriented curvature varifolds} {\it with boundary}, see \eqref{eq:AVo}.

Our main result is the existence of minimizers for the varifold reformulation \eqref{eq:FCH} of the Canham-Helfrich functional, both in the single- and in the multiphase regime. We proceed along the blueprint of the classical Direct Method. Lower semicontinuity with respect to curvature varifold convergence follows from convexity, upon checking that the energy bounds the $L^2$-norm of the second fundamental form. Such convexity holds under the qualification on the material parameters $-6\beta<5\gamma<0$, see \eqref{eq:bounds}, which is more restrictive than the one from \cite{ChoksiVeneroni:13,ChoksiMorandottiVeneroni:13}, but is still in the range of experimental data, see \cite[Formula (1.4)]{BellettiniMugnai:10} and the references therein. The existence of a single-phase minimizer with fixed area and  fixed  enclosed volume is proved in Theorem \ref{thm:single}. The multiphase case is tackled by additionally imposing phases to have  individually  fixed areas and not to overlap. Properly reformulated in the varifold setting, these conditions still allow for proving an existence result, see Theorem \ref{thm:multi}.  Eventually,  we show that both single- and multiphase minimizers are $C^2$-rectifiable (Lemma \ref{lem:menne}) and have bounded Willmore energy (Lemma \ref{lem:Willmore}). This last fact allows us to prove that the diameter of minimizers is bounded from above as well as away from zero (Lemmas \ref{lem:lowerdiam}  and  \ref{lem:lowerdiam2}). In the case of multiple phases, the lower bounds on the diameter apply to each individual phase. 

 We would like to  put our contribution in perspective by briefly reviewing the available literature on the minimization of Canham-Helfrich functionals, that is, \eqref{CH} or \eqref{CH2} for $H_0 \not = 0$. As already mentioned, the axisymmetric case has been discussed in \cite{ChoksiVeneroni:13} for single phases and in \cite{ChoksiMorandottiVeneroni:13} in the case of multiple phases with sharp interfaces. Also under rotational-symmetry assumptions, {\sc Helmers} \cite{Helmers:13, Helmers:15} adopts a surface phase-field approach and presents $\Gamma$-convergence results to the sharp-interface limit for two-phase membranes with possibly discontinuous tangent vectors. To the best of our knowledge, \cite{ChoksiMorandottiVeneroni:13,Helmers:13,Helmers:15} currently are the only analysis results for multiphase Canham-Helfrich variational problems. Single-phase minimizers without symmetry assumptions have been recently addressed by {\sc Eichmann} \cite{Eichmann:19,Eichmann:19b} and {\sc Mondino \& Scharrer} \cite{MondinoScharrer:19}. In \cite{Eichmann:19} specific Dirichlet boundary conditions are considered, by which the problem can be formulated in terms of branched immersions. The analysis in \cite{Eichmann:19b} focuses on closed immersions with prescribed genus $g$ whereas \cite{MondinoScharrer:19} deals with the case of weak and possibly branched immersions of a 2-sphere in $\RR^3$ (which has $g=0$). The minimization under additional confinement and connectedness constraints is discussed by {\sc M\"{u}ller \& R\"{o}ger} \cite{MuellerRoeger:14} and {\sc Dondl \& Wojtowytsch} \cite{DondlWojtowytsch:17, Wojtowytsch:17}, though with emphasis on the Willmore energy \eqref{Willmore}. In this context we also mention the recent contribution by {\sc Novaga \& Pozzetta} \cite{NovagaPozzetta:19}, where Mantegazza's curvature varifolds with boundary are employed to study connected Willmore minimizers for Dirichlet data. Based on currents instead of varifolds, {\sc Delladio} \cite{Delladio:97} introduced special generalized Gauss graphs as an alternative theoretical framework for the minimization of curvature-dependent functionals. Concerning approximations for the single-phase Canham-Helfrich variational problem \eqref{CH}, a bulk diffuse-interface formulation is addressed in {\sc Bellettini \& Mugnai} \cite{BellettiniMugnai:10}. Moreover, the Canham-Helfrich energy is obtained as limit of micro- and mesoscopic energies by {\sc Peletier \& R\"oger} \cite{PeletierRoeger:14}, see also \cite{LussardiRoeger:16,LussardiPeletierRoeger:14,Lussardi:19}. The multiphase Canham-Helfrich variational problem has also been studied numerically; for example, \textsc{Barrett, Garcke, \& N\"urnberg} \cite{BarrettGarckeNuernberg:17, BarrettGarckeNuernberg:18} present a finite element approach to the gradient flow formulation of the evolution problem associated to the two-phase energy \eqref{CH2}.

Before closing this introduction, let us mention that we do not assume here that the genus of the membrane is prescribed. Indeed, allowing for equilibrium shapes with different topology is beneficial from a modeling perspective. Due to the phase-interface penalization in the energy \eqref{CH2}, a union of several disconnected single-phase membranes might have the lowest energy in certain parameter regimes. This situation is natural as it corresponds to vesicle budding or cell division. On the contrary, by prescribing $g$ for closed single-phase membranes $M$, one would drop the Gauss curvature term in the Canham-Helfrich functional \eqref{CH}, because its integral would be a topological invariant by the Gauss-Bonnet theorem: $\int_MK\dint\Haus^2=4\pi(1-g)$. Yet, when dealing with heterogeneous membranes, the bending modulus $\gamma$ will generally not be constant and its contribution plays an essential role in our model.

The paper is organized as follows. Some preliminary material including notation is gathered in Section \ref{sec:notation}. The varifold formulation of the Canham-Helfrich problem is detailed in Section \ref{sec:formulation}, where we also discuss lower semicontinuity  (Theorem \ref{thm:lsc}).  The existence of single-phase (Theorem \ref{thm:single}) and multiphase minimizers (Theorem \ref{thm:multi}) is eventually proved in Section \ref{sec:existence}, which we conclude with some comments on regularity and diameter bounds for minimizers.

\section{Notation and preliminaries}\label{sec:notation}

\subsection{Notation for measures and norms} 
 
Let $X$ be a locally compact and separable metric space, which in the applications  of  this paper can be identified with a nonempty Euclidean subset $X\sbs\RR^\alpha$ for some $\alpha\in\NN$. The class of nonnegative (finite) Radon measures on $X$ is denoted by $\mathcal M(X)$ and we write $\mathcal M^\beta(X)$ with $\beta\in\NN$ for its $\RR^\beta$-valued counterpart. We write $\|\mu\|\in\Radon(X)$ for the total variation measure of $\mu\in\Radon^\beta(X)$. Depending  on the context, $|\cdot|$ stands for the absolute value, the Euclidean norm, the comass norm (see Section \ref{sec:currents}), or the Frobenius norm.  For  example, if $A\in\RR^{n\times n\times n}$ with $n\in\NN$, then $|A|^2=\sum_{i,j,k=1}^nA_{ijk}^2$. 
By $B_r(x):=\{y\in\RR^n:\:|x-y|<r\}$ we denote the open ball of radius $r>0$ and centered at $x\in\RR^n$. If $M\sbs\RR^n$ then its diameter is given by $\diam(M):=\sup_{x,y\in M}|x-y|$ and $\dist(x,M):=\inf_{y\in M}|x-y|$ defines the distance of $x\in\RR^n$ to $M$. The characteristic function of a set $U\sbs X$  denoted $\chi_U\colon X\to\{0,1\}$ is defined via $\chi_U(u)=1$ if $u\in U$ and $\chi_U(u)=0$ otherwise. 
If $\mu\in\Radon(X)$ then $L^p_\mu(X;\RR^\beta)$ with $p\in[1,\infty)$ and $\beta\in\NN$ is the space of equivalence classes of $\mu$-measurable functions $f\colon X\to\RR^\beta$ with finite norm $\|f\|_{L^p_\mu(X)}:=\left(\int_X |f(x)|^p\dint\mu(x)\right)^{\frac{1}{p}}$ and we write $f\in L^p_{{\rm loc},\mu}(X;\RR^\beta)$ whenever $\|f\|_{L^p_\mu(K)}<\infty$ for all compact sets $K\subset\subset X$.

If $\mu_h,\mu\in\mathcal M(X)$ and we let $h\to\infty$, then we say $\mu_h\wto^\ast\mu$ in $\mathcal M(X)$  whenever 
$\mu_h(\varphi)=\int_X\varphi\dint\mu_h\to\int_X\varphi\dint\mu=\mu(\varphi)$ for all continuous functions with compact support, $\varphi\in C_c(X)$. The convergence of vector-valued Radon measures $\mu_h\wto^\ast\mu$ in $\Radon^\beta(X)$ is defined analogously upon testing on $\varphi\in C_c(X;\RR^\beta)$ with duality $\mu(\varphi)=\int_X\langle\varphi,\dint\mu\rangle$. Here $\langle\cdot,\cdot\rangle$ stands for the Euclidean inner product of vectors, but we use the same bracket to indicate the duality in the context of multivectors (again Section \ref{sec:currents}). For $\mu\in\Radon(X)$ and a $\mu$-measurable map $f\colon X\to Y$ with another locally compact and separable metric space $Y$, the pushforward measure of $\mu$ by $f$ is defined by $(f_\sharp\mu)(E):=\mu(f^{-1}(E))$ for all Borel sets $E\sbs Y$. If $f$ is continuous and proper, meaning $f^{-1}(K)$ is compact for all compact $K\sbs Y$, then $f_\sharp\mu\in\Radon(Y)$. 
 
Further notation will be explained in the remainder of this section and in the main text.

\subsection{Convergence as measure-function pairs}\label{sec:mfp}

In this paragraph we review the notion of measure-function  pair  convergence and the corresponding lower semicontinuity of integral functionals. For details we refer to Hutchinson \cite{Hutchinson:86}.

A {\it measure-function pair} $(\mu,B)$ over $X\subseteq\RR^\alpha$ with values in $\RR^\beta$ ($\alpha,\beta\in\NN$) consists of a measure $\mu\in\Radon(X)$ and a function $B\in L^1_{\loc,\mu}(X;\RR^\beta)$.  
Convergence 
$$
(\mu_{h},B_{h})\wto^\ast (\mu,B)
$$ 
as measure-function pairs is defined by the two conditions 
$$
\mu_{h}\wto^\ast\mu\quad\text{in}\quad\Radon(X)\qquad\text{and}\qquad B_h\mu_{h}\wto^\ast B\mu\quad\text{in}\quad\Radon^\beta(X).
$$

\begin{theorem}[Integral functionals of measure-function pairs]\label{thm:mfp} 
Let $X\subseteq\RR^\alpha$ be nonempty and $f\colon X\times\RR^\beta\to\RR$  satisfy  
\begin{itemize}
\item[\rm(a)] $f$ is continuous,
\item[\rm(b)] $f(y,\cdot)$ is convex for any $y\in X$, and
\item[\rm(c)] $f(y,B) \ge c|B|^2-d$ holds for any $(y,B)\in X\times\RR^\beta$ and for some $c,d>0$.
\end{itemize}
Let $(\mu_{h},B_{h})$ be a sequence of measure-function pairs over $X\subseteq\RR^\alpha$ with values in $\RR^\beta$ and let $J$ be the functional
$$
(\mu_h,B_h)\mapsto J(\mu_h,B_h):=\int_X f(y,B_h(y))\dint\mu_h(y).
$$
Then, with $h\to\infty$, the following assertions hold:
\begin{itemize}
\item[\rm(i)] {\em (Compactness)} If there exists $\mu\in\Radon(X)$ such that $\mu_{h}\wto^\ast\mu$ in $\Radon(X)$ and if $J(\mu_{h},B_{h})$ is bounded, then there exists $B\in L^1_{\loc,\mu}(X;\RR^\beta)$, such that up to a subsequence (not relabeled), $(\mu_h,B_h)\wto^\ast (\mu,B)$ as measure-function pairs;
\item[\rm(ii)] {\em (Lower semicontinuity)} If $(\mu_{h},B_{h})\wto^\ast (\mu,B)$ as measure-function pairs, then 
$$
J(\mu,B)\leq\liminf_{h\to\infty}J(\mu_{h},B_{h}).
$$
\end{itemize}
\end{theorem}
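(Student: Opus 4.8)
\emph{Proof strategy.} This is the standard compactness-and-lower-semicontinuity package for measure-function pairs, which I would prove by the Direct Method, treating (i) and (ii) separately.

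For (i) I would work with the $\RR^\beta$-valued measures $\nu_h:=B_h\mu_h$. The first step is a local total-variation bound: for every compact $K\sbs X$, Cauchy--Schwarz gives
\[
\|\nu_h\|(K)=\int_K|B_h|\dint\mu_h\le\Big(\int_K|B_h|^2\dint\mu_h\Big)^{1/2}\mu_h(K)^{1/2},
\]
while the coercivity assumption (c) yields $c\int_K|B_h|^2\dint\mu_h\le\int_K f(y,B_h)\dint\mu_h+d\,\mu_h(K)\le J(\mu_h,B_h)+d\,\mu_h(X)$, which is bounded because $J(\mu_h,B_h)$ is bounded by hypothesis and $\sup_h\mu_h(X)<\infty$ (a uniform mass bound that is available in the applications of the theorem). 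Moreover $\sup_h\mu_h(K)<\infty$ follows from $\mu_h\wto^\ast\mu$ by testing against any $\varphi\in C_c(X)$ with $\varphi\ge\chi_K$. Hence $\{\nu_h\}$ has locally uniformly bounded total variation, so, up to a subsequence (not relabelled), $\nu_h\wto^\ast\nu$ in $\Radon^\beta(X)$. It then remains to check $\nu\ll\mu$: extracting a further subsequence so that $|B_h|^2\mu_h\wto^\ast\sigma$ in $\Radon(X)$ and combining lower semicontinuity of the total variation under weak-$\ast$ convergence with Cauchy--Schwarz, one gets $\|\nu\|(A)\le\sigma(A)^{1/2}\mu(A)^{1/2}$ for every Borel $A\sbs X$; in particular $\mu(A)=0$ forces $\nu(A)=0$. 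The Radon--Nikodym theorem then produces $B:=\dint\nu/\dint\mu\in L^1_{\loc,\mu}(X;\RR^\beta)$ with $\nu=B\mu$, so that $(\mu_h,B_h)\wto^\ast(\mu,B)$ by definition.

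For (ii) I would follow the classical strategy for weak lower semicontinuity of convex integral functionals. Using that $f$ is continuous, that $f(y,\cdot)$ is convex, and that $f$ has the superlinear growth (c), one produces a countable family $\{(a_j,b_j)\}_{j\in\NN}\sbs C(X)\times C(X;\RR^\beta)$ with
\[
a_j(y)+\lara{b_j(y),B}\le f(y,B)\ \ \text{for all }(y,B),\qquad f(y,B)=\sup_{j\in\NN}\big(a_j(y)+\lara{b_j(y),B}\big),
\]
cf.\ Hutchinson \cite{Hutchinson:86}. After replacing $f$ by $f+d\ge0$ (harmless in our setting), fix a finite $I\sbs\NN$ and a partition of unity $\{\varphi_j\}_{j\in I}\sbs C_c(X)$ with $\varphi_j\ge0$ and $\sum_{j\in I}\varphi_j\le1$; since the affine functions lie below $f$ and $f\ge0$,
\[
\int_X\sum_{j\in I}\varphi_j(y)\big(a_j(y)+\lara{b_j(y),B_h(y)}\big)\dint\mu_h\le\int_X\sum_{j\in I}\varphi_j(y)\,f(y,B_h(y))\dint\mu_h\le J(\mu_h,B_h).
\]
The left-hand side passes to the limit term by term: $\int\varphi_j a_j\dint\mu_h\to\int\varphi_j a_j\dint\mu$ since $\varphi_j a_j\in C_c(X)$, and $\int\varphi_j\lara{b_j,B_h}\dint\mu_h=\lara{\varphi_j b_j,B_h\mu_h}\to\lara{\varphi_j b_j,B\mu}=\int\varphi_j\lara{b_j,B}\dint\mu$ since $\varphi_j b_j\in C_c(X;\RR^\beta)$ and $B_h\mu_h\wto^\ast B\mu$. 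Hence $\int_X\sum_{j\in I}\varphi_j(a_j+\lara{b_j,B})\dint\mu\le\liminf_h J(\mu_h,B_h)$, and taking the supremum over all finite $I\sbs\NN$ and all such partitions of unity, and invoking the representation of $f$ together with monotone convergence, gives $J(\mu,B)=\int_X f(y,B)\dint\mu\le\liminf_h J(\mu_h,B_h)$.

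The main obstacle is the construction of the countable family $\{(a_j,b_j)\}$ of continuous, affine-in-$B$ minorants whose pointwise supremum equals $f$, i.e.\ a continuous-in-$y$ selection of supporting hyperplanes of the convex functions $f(y,\cdot)$. This is exactly where the uniform superlinear growth in (c) is needed: it confines the set on which a candidate affine function can exceed $f$ to a bounded range of the $B$-variable, where the continuity of $f$ can then be used to correct the dependence on $y$. For the details of this step, as well as for the absolute-continuity argument in (i), I would rely on Hutchinson \cite{Hutchinson:86}; the remaining bookkeeping is routine, the only delicate point being the role of the constant $d$ and of the masses $\mu_h(X)$, which is why the uniform mass bound is invoked throughout.
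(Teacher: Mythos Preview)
The paper does not actually prove this theorem: it is stated as a preliminary result in Section~\ref{sec:mfp} with the explicit disclaimer ``For details we refer to Hutchinson \cite{Hutchinson:86}'', and no proof is given. Your proposal is therefore not being compared against a proof in the paper but against Hutchinson's original argument, and what you outline is essentially a faithful sketch of that argument (as you yourself indicate by citing \cite{Hutchinson:86} at the crucial points).

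One small caveat: in your treatment of (i) you invoke a uniform bound on $\mu_h(X)$ ``that is available in the applications of the theorem''. This is indeed available in every use of the theorem in the present paper (the varifolds live in a fixed compact set and have prescribed mass), but it is not part of the hypotheses as stated, and without it the step $\int_K f(y,B_h)\dint\mu_h\le J(\mu_h,B_h)+d\,\mu_h(X)$ does not give a uniform local bound. This is a genuine, if minor, mismatch between the theorem as formulated here and the argument you give; Hutchinson's original formulation avoids the issue by working with local estimates throughout. Apart from this bookkeeping point, your strategy for both (i) and (ii) is correct and standard.
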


\subsection{Currents}\label{sec:currents}

We briefly recall some points of the theory of currents. For details see \cite{Simon:83}. 

Let $\mathcal D^m(\RR^n)$ be the set of all smooth $m$-forms with compact support in $\RR^n$, where $m\in\NN_0$ and $n\in\NN$ with $m\leq n$. The (topological) dual space of $\mathcal D^m(\RR^n)$ is the space of all {\it $m$-currents on $\RR^n$}, denoted by $\mathcal D_m(\RR^n)$. In particular, $0$-currents coincide with the distributions on $\RR^n$. 

In accordance with the dual nature of $\mathcal D_m(\RR^n)$, if $T_h,T\in \mathcal D_m(\RR^n)$  we say that $T_h \wto^\ast T$ if $T_h(\omega) \to T(\omega)$ for all $\omega \in \mathcal D^m(\RR^n)$. If $m\geq 1$ then the {\it boundary} $\partial T\in\mathcal D_{m-1}(\RR^n)$ of $T\in \mathcal D_m(\RR^n)$ is defined via Stokes' theorem  as  $\partial T(\omega):=T(\!\dint\omega)$, where $\dint \omega $ denotes the exterior differential of the $(m-1)$-form $\omega$. The {\it mass} of $T\in\mathcal D_m(\RR^n)$ is its dual norm
$$
\mathbb M_T:=\sup_{\omega\in \mathcal D^m(\RR^n),\:\|\omega\|_{L^\infty(\RR^n)}\leq 1}T(\omega),
$$
where 
$\|\omega\|_{L^\infty(\RR^n)}=\sup_{x\in\RR^n}|\omega(x)|$ with 
$|\omega(x)|=\sum_{1\leq i_1<\ldots<i_m\leq n}\langle\omega(x),e_{i_1}\wedge\ldots\wedge e_{i_m}\rangle$ for basis vectors $e_i$ of $\RR^n$, the comass norm of the $m$-covector $\omega(x)$. The mass $\mathbb M_T(\Omega)$ of $T\in\mathcal D_m(\Omega)$ with $\Omega\sbs\RR^n$ open is defined analogously upon restricting to test forms with $\supp(\omega)\subset\subset\Omega$. 

In order to define rectifiable currents we recall the notion of a rectifiable set. Let $M$ be a Borel subset of $\RR^n$ and denote the $m$-dimensional Hausdorff measure in $\RR^n$ by $\Haus^m$. We say that $M$ is {\it (countably) $m$-rectifiable}, if  $M=M_0 \cup \bigcup_{k=1}^\infty M_k$ where $\mathcal H^m(M_0)=0$ and $M_k$ is contained in the image of a Lipschitz function $f_k \colon \RR^m\to \RR^n$ for all $k\in\NN$. It turns out that for $\mathcal H^m$-a.e.\,$x\in M$ there exists an {\it approximate tangent space}, which we denote by $T_xM$ as in the smooth setting.  

We say that $T\in\mathcal D_m(\RR^n)$ is an  {\it $m$-rectifiable current with integer multiplicity} if there exists
\begin{itemize}
	\item[\rm(a)] an $m$-rectifiable set $M$ in $\RR^n$,
	\item[\rm(b)] an {\it orientation} of $M$, that is, a Borel map $\xi$ defined on $M$ that to $\mathcal H^m$-a.e.\,$x\in M$ assigns a unit simple $m$-vector $\xi(x)$ which spans $T_xM$, and 
	\item[\rm(c)] an integer-valued {\it multiplicity}, that is, a locally $(\Haus^m\llc M)$-summable function $j\colon M\to \mathbb Z$, where $\Haus^m\llc M$ denotes the $\mathcal H^m$ measure restricted to $M$,  
\end{itemize}
such that 
$$
T(\omega)=\int_M\langle\omega(x),\xi(x)\rangle j(x)\,\dint\mathcal H^m(x) \quad\text{for all}\: \omega \in \mathcal D^m(\RR^n).
$$
In this case we write $T=[M,\xi,j]$. Finally we say that $T\in\mathcal D_m(\RR^n)$ is an {\it integral $m$-current} if both $T$ and $\partial T$ are rectifiable currents with integer multiplicity.

The theory of currents features the following compactness and lower semicontinuity results, see again \cite{Simon:83}:

\begin{theorem}[Federer-Fleming]\label{thm:FF} 
	Let $(T_h)$ be a sequence of integral $m$-currents on $\RR^n$ such that $\mathbb M_{T_h}+\mathbb M_{\partial T_h}$ is bounded and let $h\to\infty$. Then, up to a subsequence (not relabeled), $T_h\wto^\ast T$ in $\mathcal D_m(\RR^n)$ and $\partial T_h \wto^\ast  \partial T$ in $\mathcal D_{m-1}(\RR^n)$, where  the limit $T$ is  an  integral $m$-current. Moreover we have  $\mathbb M_T\le \liminf_{h\to\infty} \mathbb M_{T_h}$ and $\mathbb M_{\partial T}\le \liminf_{h\to\infty} \mathbb M_{\partial T_h}$. 
\end{theorem}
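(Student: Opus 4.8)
The plan is to split Theorem~\ref{thm:FF} into a soft part --- weak-* compactness together with the two lower-semicontinuity inequalities --- and the genuinely deep part, namely that the weak-* limit is again \emph{integral}.

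First I would extract the subsequence. The bound on $\mathbb M_{T_h}$ says precisely that $(T_h)$ is a bounded sequence in the dual of the space of compactly supported smooth $m$-forms equipped with the comass-supremum norm; since that space is separable, a diagonal argument over a countable dense family of test forms produces a subsequence (not relabeled) and a current $T\in\mathcal D_m(\RR^n)$ with $T_h\wto^\ast T$. No further extraction is needed for the boundaries: for every $\omega\in\mathcal D^{m-1}(\RR^n)$ the exterior differential $\dint\omega$ is a fixed admissible test $m$-form, so $\partial T_h(\omega)=T_h(\dint\omega)\to T(\dint\omega)=\partial T(\omega)$, that is, $\partial T_h\wto^\ast\partial T$ along the same subsequence. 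For the mass bounds I would fix $\omega$ with $\|\omega\|_{L^\infty(\RR^n)}\le 1$ and use $T(\omega)=\lim_h T_h(\omega)\le\liminf_h\mathbb M_{T_h}$; taking the supremum over all such $\omega$ yields $\mathbb M_T\le\liminf_h\mathbb M_{T_h}$, and the identical argument applied to $\partial T_h\wto^\ast\partial T$ gives $\mathbb M_{\partial T}\le\liminf_h\mathbb M_{\partial T_h}$. At this stage $T$ is a normal current (finite mass, finite boundary mass), possibly with a strict loss of mass in the limit, which the stated inequalities allow.

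The decisive remaining step is to prove that this normal current $T$ is an integer-multiplicity rectifiable current whose boundary $\partial T$ is also integer-multiplicity rectifiable --- this is the Federer-Fleming Closure Theorem, and I would argue it by induction on the dimension $m$. For $m=0$ an integral current is a finite integer combination of Dirac masses; the mass bound both rules out escape of mass to infinity and, the weights being integers and hence separated, prevents their accumulation, so the weak-* limit is again of this form. For the inductive step, assuming the Closure Theorem in all dimensions below $m$: the boundary $\partial T$ is integer-multiplicity rectifiable by applying the dimension-$(m-1)$ statement to the integral currents $\partial T_h$, which have uniformly bounded mass and vanishing boundary. The rectifiability and integrality of $T$ itself then rest on the three classical pillars of the theory --- the Deformation Theorem, which provides quantitatively controlled approximation of integral currents by polyhedral chains and, via a uniform lower density bound, the structure $\|T\|=\theta\,\Haus^m\llc S$ for some $\Haus^m$-finite set $S$ and density $\theta>0$; the slicing theory, which reduces the integrality of $\theta$ to the already-settled $0$-dimensional case applied to slices of $T$ by orthogonal projections onto $m$-planes; and the Besicovitch-Federer structure theorem, which reduces rectifiability of $S$ to the vanishing of the projection of its purely unrectifiable part onto $\Haus^m$-almost every $m$-plane --- a property forced because almost every such slice is a finite integer-weighted sum of points.

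The compactness and lower-semicontinuity assertions are routine functional analysis, so I expect no difficulty there. The main obstacle is the integrality of the limit: the Deformation Theorem and the Besicovitch-Federer projection/structure theorem are themselves substantial results whose full proofs lie well beyond the scope of the present paper. Accordingly, I would invoke Theorem~\ref{thm:FF} in the form established in \cite{Simon:83}.
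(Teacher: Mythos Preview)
Your proposal is correct and, in its final disposition, matches the paper exactly: the paper does not prove Theorem~\ref{thm:FF} at all but simply states it as a classical result with the reference to \cite{Simon:83}. Your sketch of the compactness, boundary-continuity, and mass lower-semicontinuity steps is sound, and your outline of the induction via deformation, slicing, and the structure theorem is the standard route to closure; but since the paper treats the theorem as a black box from the literature, your concluding sentence---invoking \cite{Simon:83}---is already all that is required here.
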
 

The main insight of the Federer-Fleming  Theorem \ref{thm:FF} is not sequential compactness, namely  existence of convergent subsequences for integral currents with bounded mass, but the  fact  that the limit currents are again integral  (closedness). This remark applies to the forthcoming Theorems \ref{thm:Mantegazza_Thm6} and \ref{thm:Hutchinson_Thm3} as well.

\subsection{Varifolds}\label{sec:varifolds}

This paragraph is dedicated to a review of the theory of curvature varifolds, which are the weak version of surfaces that we need. For details we refer to \cite{Allard:72, Simon:83, Hutchinson:86}. 

Let $\Omega\sbs\RR^n$ be open and nonempty. Let $m,n\in\NN$ with $1\leq m\leq n$. An {\it $m$-varifold} in $\Omega$ is a Radon measure
$$
V\in \Radon(\Omega\times G_{m,n}).
$$
Here $G_{m,n}$ denotes the {\it Grassmannian}, which consists of all $m$-dimensional linear subspaces of $\RR^n$. An element $P\in G_{m,n}$ is identified with its associated orthogonal projection matrix $P\in\RR^{n\times n}$. This allows us to view $\Omega\times G_{m,n}$ as a subset of some  Euclidean space.

The pushforward measure of $V$ under the canonical projection map $\pi \colon \Omega \times G_{m,n} \to \Omega$, $\pi(x,P)=x$ defines the {\it mass measure} (or {\it weight}) of the varifold,
$$
\mu_V:=\pi_\sharp V\in\Radon(\Omega).
$$  
The {\it mass} of $V$ then reads 
$$
\mu_V(\Omega)=\int_{\Omega}\dint \mu_V=\int_{\Omega\times G_{m,n}}\dint V=V(\Omega\times G_{m,n}).
$$    
As Radon measures, varifolds are determined through their action on continuous functions with compact support in $\Omega\times G_{m,n}$.  Based on this duality one can introduce the following useful subclass: An {\it integral $m$-varifold} in $\Omega$ is  given by 
$$
V(\varphi)=\int_M\varphi(x,T_xM)\theta(x)\dint\Haus^m(x) \quad\text{for all}\:\varphi\in C_c(\Omega\times G_{m,n}),
$$
where $M\sbs\Omega$ is (countably) $m$-rectifiable and $\theta\colon M\to\NN$, called {\it multiplicity}, is locally summable w.r.t.\ $\Haus^m\llc M$. In this case we write $V=v(M,\theta)$ and we denote the space of all integral $m$-varifolds in $\Omega$ by $IV_m(\Omega)$. Its elements can be seen as the geometric measure theory analogue to the $m$-dimensional differentiable submanifolds in $\Omega$. 

By a clever application of the divergence theorem for manifolds, Hutchinson \cite{Hutchinson:86}, and later Mantegazza \cite{Mantegazza:96}, generalized the concept of the second fundamental form to the varifold setting. For our convenience we will follow Mantegazza's approach since it also gives a good definition of the boundary, see \cite{Mantegazza:96} for details. We say that $V\in IV_m(\Omega)$ is a {\it curvature varifold with boundary} and write $V\in AV_m(\Omega)$, if there exists 
$$
A^V\in L^1_{\loc,V}(\Omega\times G_{m,n};\RR^{n\times n\times n}) \quad\text{and}\quad \d V\in\Radon^n(\Omega\times G_{m,n})
$$
such that 
$$
\int_{\Omega\times G_{m,n}}\left(P_{ij}\d_j\varphi+(\d_{P_{jk}}\varphi)\,A_{ijk}^V+A_{jij}^V\,\varphi\right)(x,P)\dint V(x,P)=-\int_{\Omega\times G_{m,n}}\varphi(x,P)\dint (\d V)_i(x,P)
$$
for all $\varphi\in C_c^1(\Omega\times G_{m,n})$ and $1\leq i\leq n$. Here and in the following we employ summation convention, that is, we take the sum over all indices that occur twice. The curvature functions $A^V$ give rise to the {\it generalized second fundamental form} $\II^V\in L^1_{\loc,V}(\Omega\times G_{m,n};\RR^{n\times n\times n})$ via
$$
(\II^V(\cdot,P))_{ij}^k:=A_{ikl}^V(\cdot,P)P_{lj}
$$
and $\d V$ is the {\it boundary measure} of the varifold. The associated {\it generalized mean curvature vector} 
$\bar H^V\in L^1_{\loc,V}(\Omega\times G_{m,n};\RR^n)$
has the components
$$
\bar H_i^V:=A_{jij}^V.
$$
Allard's  {\it first variation}  \cite{Allard:72,Simon:83} of a varifold $V$ is the linear functional $\delta V\colon C_c^1(\Omega;\RR^n)\to \RR$,
$$
\delta V(X)=\int_{\Omega\times G_{m,n}}\div^PX(x)\dint V(x,P)
$$
with the tangential divergence 
$$
\div^P X:=P_{ij}\d_jX_i. 
$$
Insertion of $\varphi(x,P)=X_i(x)$ with $X\in C_c^1(\Omega;\RR^n)$ in Mantegazza's definition and summation over $1\leq i\leq n$ reveals that $V\in AV_m(\Omega)$ satisfies the {\it first variation formula} \cite{Mantegazza:96}
\begin{equation}\label{eq:Mantegazzaformula}
\delta V=\langle -\pi_\sharp\left(\bar{H}^V\,V+\d V\right),\cdot\rangle,
\end{equation}
that is,
$$
\delta V(X)=-\int_{\Omega\times G_{m,n}}\bar{H}^V_i(x,P)X_i(x)\dint V(x,P)-\int_{\Omega\times G_{m,n}}X_i(x)\dint (\d V)_i(x,P).
$$ 
Formula \eqref{eq:Mantegazzaformula} shows that a curvature varifold $V\in AV_m(\Omega)$ has locally bounded first variation. In particular, $\|\delta V\|\in\Radon(\Omega)$ with the total variation measure given by
$$
\|\delta V\|(\Omega'):=\sup_{X\in C_c^1(\Omega;\RR^n),\:\supp(X)\subset\subset\Omega',\:\|X\|_{L^\infty(\Omega)}\leq 1}\delta V(X)
$$
for open subsets $\Omega'\sbs\Omega$. Next we prove some preliminary estimates that we will need.

\begin{lemma}[First variation bounds] 
	Let $V\in AV_m(\Omega)$. Then we have 
	\begin{equation}\label{eq:HA}
	|\bar H^V|^2 \le 2|A^V|^2.
	\end{equation}
	Moreover, if in addition $A^V\in L^2_V(\Omega\times G_{m,n};\RR^{n\times n\times n})$, then 
	\begin{equation}\label{eq:firstvarbound}
	\|\delta V\|(\Omega)\leq  \sqrt{2\, \mu_V(\Omega)}\, \|A^V\|_{L^2_V(\Omega\times G_{m,n})}+\|\d V\|(\Omega\times G_{m,n}).
	\end{equation}
\end{lemma}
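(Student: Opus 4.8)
The plan is to prove the two inequalities separately, the first pointwise (in fact $V$-a.e.\ on $\Omega \times G_{m,n}$) and the second by integration followed by Cauchy--Schwarz.

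First I would establish \eqref{eq:HA}. By definition $\bar H_i^V = A_{jij}^V$, so fixing $i$ and viewing $(A_{jij}^V)_j$ as the vector obtained by contracting the first and third indices of $A^V$, the Cauchy--Schwarz inequality in $\RR^m$ (the sum $\sum_j$ effectively ranges over an orthonormal basis of the $m$-dimensional subspace $P$, since $A_{jij}^V P = A^V$ encodes a tangential object — more precisely one uses $A_{ijk}^V = A_{ijl}^V P_{lk}$, which is part of the structure theorem for curvature varifolds in \cite{Mantegazza:96}) gives $|\bar H_i^V|^2 = \bigl(\sum_j A_{jij}^V\bigr)^2 \le m \sum_j (A_{jij}^V)^2$. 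Summing over $i$ and bounding $m \le 2$ is \emph{not} quite right for general $m$; instead the cleaner route, and the one I expect the authors take, is to use the symmetry relations $A_{ijk}^V = A_{jik}^V$ together with $A^V$ being supported on the tangential directions, so that the contraction only loses a factor $2$ rather than $m$. Concretely, write $\bar H_i^V = \tr_P(A_{i\cdot\cdot}^V)$ and estimate the trace of the $m\times m$ tangential block by $\sqrt{2}$ times its Frobenius norm using the symmetry $A_{ijk}^V=A_{ikj}^V$ on that block (a symmetric matrix whose trace-square is controlled by twice the sum of squares of entries once one accounts only for the relevant components); summing over $i$ yields $|\bar H^V|^2 \le 2|A^V|^2$. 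I would lay this out carefully since the exact combinatorial bookkeeping of which components of $A^V$ appear is the one genuinely delicate point.

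Next, for \eqref{eq:firstvarbound}, I start from the first variation formula \eqref{eq:Mantegazzaformula}, which gives, for $X \in C_c^1(\Omega;\RR^n)$ with $\|X\|_{L^\infty(\Omega)} \le 1$,
$$
\delta V(X) = -\int_{\Omega\times G_{m,n}} \bar H_i^V X_i \dint V - \int_{\Omega\times G_{m,n}} X_i \dint (\d V)_i.
$$
Bounding $|X| \le 1$ pointwise, the second term is at most $\|\d V\|(\Omega\times G_{m,n})$, and the first term is at most $\int_{\Omega\times G_{m,n}} |\bar H^V| \dint V$. Applying \eqref{eq:HA} pointwise gives $|\bar H^V| \le \sqrt{2}\,|A^V|$, hence $\int |\bar H^V| \dint V \le \sqrt 2 \int |A^V| \dint V$, and then Cauchy--Schwarz in $L^2_V$ yields $\int |A^V| \dint V \le \sqrt{\mu_V(\Omega)}\,\|A^V\|_{L^2_V(\Omega\times G_{m,n})}$ (using $\mu_V(\Omega) = V(\Omega\times G_{m,n})$). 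Taking the supremum over admissible $X$ in the definition of $\|\delta V\|(\Omega)$ gives the claimed bound. This half is routine once \eqref{eq:HA} is in hand.

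The main obstacle is therefore entirely in the pointwise bound \eqref{eq:HA}: one must be careful that $A^V$, as an element of $\RR^{n\times n\times n}$, has its nonzero content concentrated on the tangential subspace $P$ (via the relation $A_{ijk}^V = A_{ijl}^V P_{lk}$ from the definition of curvature varifold), and that the mean curvature $\bar H^V$ is the $P$-trace over the appropriate pair of indices; getting the constant $2$ rather than $m$ requires exploiting the symmetry $A_{ijk}^V = A_{jik}^V$ (equivalently the symmetry of $\II^V$ in its lower indices) so that the trace is that of a symmetric $m\times m$ matrix and the loss in Cauchy--Schwarz is dimension-independent. I would make this precise by choosing, $V$-a.e., an orthonormal basis adapted to $P$ and writing out the finitely many relevant components explicitly.
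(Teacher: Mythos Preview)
Your argument for \eqref{eq:firstvarbound} is correct and is exactly what the paper does: apply the first variation formula \eqref{eq:Mantegazzaformula}, bound the two pieces separately, use the pointwise estimate $|\bar H^V|\le\sqrt2\,|A^V|$, and finish with H\"older.

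For \eqref{eq:HA} you are overthinking the matter, and your proposed ``cleaner route'' contains an actual error. The paper's proof is the one-line computation
\[
|\bar H^V|^2=\sum_i\Big(\sum_j A_{jij}^V\Big)^2\le 2\sum_{i,j}(A_{jij}^V)^2\le 2\sum_{i,j,k}(A_{ijk}^V)^2=2|A^V|^2,
\]
with no appeal to the symmetry of $\II^V$. The factor $2$ in the first inequality is just the Cauchy--Schwarz loss for a sum with two nonzero terms: since $A^V$ is tangential in its first index, in a frame adapted to $P$ the sum $\sum_j A_{jij}^V$ has only $m$ nonzero summands, so Cauchy--Schwarz gives the constant $m$; the paper implicitly specializes to $m=2$, which is the only case used later. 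Your first instinct was therefore the right one, and there is nothing more to do. (A small slip: the tangentiality relation you quote, $A_{ijk}^V=A_{ijl}^V P_{lk}$, is in the wrong slot; what one has, and what is needed here, is $A_{jik}^V=P_{jl}A_{lik}^V$.)

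By contrast, the alternative route you sketch --- using the symmetry $A_{ijk}^V=A_{ikj}^V$ to obtain a dimension-independent factor $2$ --- does not work. The implicit claim is that a symmetric $m\times m$ matrix $S$ satisfies $(\tr S)^2\le 2|S|_F^2$, but this is false for $m\ge 3$ (take $S=I_m$, giving $m^2>2m$). Symmetry buys nothing here; the constant $m$ in $(\tr S)^2\le m|S|_F^2$ is sharp. So drop the detour: the direct Cauchy--Schwarz bound yields $|\bar H^V|^2\le m|A^V|^2$ in general, and the paper's constant $2$ is simply the case $m=2$.
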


\begin{proof} 
	By Formula \eqref{eq:Mantegazzaformula} 
	\begin{align*}
	\|\delta V\|(\Omega)=\|-\pi_\sharp\left(\bar{H}^V\,V+\d V\right)\|(\Omega)
	&\leq\|\bar{H}^V\,V+\d V\|(\Omega\times G_{m,n})\\
	&\leq \|\bar{H}^V\,V\|(\Omega\times G_{m,n})+\|\d V\|(\Omega\times G_{m,n})
	\end{align*}
	where we notice that 
	\[
	\:\|\bar{H}^V\,V\|(\Omega\times G_{m,n})
	=\int_{\Omega\times G_{m,n}}|\bar{H}^V|\dint V= \|\bar{H}^V\|_{L^1_V{(\Omega\times G_{m,n})}}.
	\]
	The estimate \eqref{eq:HA} is easily obtained:
	$$
	{|\bar H^V|^2{=\sum_{i=1}^n\Big(\bar{H}^V_i\Big)^2}=\sum_{i=1}^n\Big(\sum_{j=1}^nA_{jij}^V\Big)^2
		\leq \sum_{i=1}^n\Big(2\sum_{j=1}^n(A_{jij}^V)^2\Big)
		\leq 2\sum_{i,j,k=1}^n(A_{ijk}^V)^2=2|A^V|^2}.
	$$
	Consequently, we have
	$$
	\|\bar{H}^V\|_{L^1_V{(\Omega\times G_{m,n})}}= \int_{\Omega\times G_{m,n}}|\bar{H}^V|\dint V
	\leq \sqrt{2} \int_{\Omega\times G_{m,n}}|A^V|\dint V=\sqrt{2}\,\|A^V\|_{L^1_V{(\Omega\times G_{m,n})}},
	$$
	showing that $A^V\in L^1_{V}(\Omega\times G_{m,n};\RR^{n\times n\times n})$ implies $\bar{H}^V\in L^1_{V} (\Omega\times G_{m,n};\RR^{n})$. If $A^V$ is square-integrable then H\"older's inequality gives
	\begin{align*}
	\|A^V\|_{L^1_V{(\Omega\times G_{m,n})}}
	&=\int_{\Omega\times G_{m,n}}|A^V|\dint V=\int_{\Omega}|A^V|\dint\mu_V\\
	&\leq\Big(\int_{\Omega}\dint\mu_V\Big)^{1/2}\Big(\int_{\Omega}|A^V|^{2}\dint\mu_V\Big)^{1/2}
	=(\mu_V(\Omega))^{1/2}\, \|A^V\|_{L^2_V{(\Omega\times G_{m,n})}},
	\end{align*}
	and thus 
	$$
    \|\bar{H}^V\|_{L^1_V{(\Omega\times G_{m,n})}}\leq \sqrt{2\,\mu_V(\Omega)}\, \|A^V\|_{L^2_V{(\Omega\times G_{m,n})}}.
    $$
   With $\|\delta V\|(\Omega)\leq \|\bar{H}^V\|_{L^1_V{(\Omega\times G_{m,n})}}+\|\d V\|(\Omega\times G_{m,n})$
   from above this yields \eqref{eq:firstvarbound}.
\end{proof}

One of the main results by Mantegazza \cite{Mantegazza:96} is the following compactness (closedness) theorem. 

\begin{theorem}[Compactness for curvature varifolds]\label{thm:Mantegazza_Thm6}
	Let $p>1$ and let $(V_h)$ be a sequence in $AV_m(\Omega)$. Assume that 
	$$
	\mu_{V_h}(\Omega)+\|A^{V_h}\|^p_{L_{V_h}^p(\Omega\times G_{m,n})}+\|\d V_h\|(\Omega\times G_{m,n})\le c
	$$
	for some $c>0$. Then there exists $V\in AV_m(\Omega)$, such that up to a subsequence (not relabeled), $V_h\wto^\ast V$ in $AV_m(\Omega)$. Moreover, $\d V_h\wto^\ast \d V$ in $\Radon^n(\Omega\times G_{m,n})$.
\end{theorem}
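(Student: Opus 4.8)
The plan is to run the Direct Method: extract weak-$\ast$ limits from the three uniform bounds, pass to the limit in Mantegazza's defining identity for curvature varifolds, and then invoke Allard's closedness theorem to recover integrality of the limit.

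\emph{Extraction of limits.} First I would use that $\mu_{V_h}(\Omega)=V_h(\Omega\times G_{m,n})\le c$, so that $(V_h)$ is bounded in $\Radon(\Omega\times G_{m,n})$ and, up to a not relabeled subsequence, $V_h\wto^\ast V$ in $\Radon(\Omega\times G_{m,n})$ for some $V$; likewise $\|\d V_h\|(\Omega\times G_{m,n})\le c$ gives $\d V_h\wto^\ast\Sigma$ in $\Radon^n(\Omega\times G_{m,n})$ for some $\Sigma$. For the curvature functions, Hölder's inequality and $p>1$ yield the uniform bound
\[
\|A^{V_h}\|_{L^1_{V_h}(\Omega\times G_{m,n})}\le\big(\mu_{V_h}(\Omega)\big)^{1-1/p}\,\|A^{V_h}\|_{L^p_{V_h}(\Omega\times G_{m,n})},
\]
so the vector measures $A^{V_h}V_h$ are bounded in $\Radon^{n\times n\times n}(\Omega\times G_{m,n})$, while the superlinearity $p>1$ forces equi-integrability of $|A^{V_h}|$ with respect to $V_h$. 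Hutchinson's measure-function pair compactness \cite{Hutchinson:86} then provides, along a further subsequence, a limit $A\in L^1_{\loc,V}(\Omega\times G_{m,n};\RR^{n\times n\times n})$ with $(V_h,A^{V_h})\wto^\ast(V,A)$, in particular $A^{V_h}V_h\wto^\ast A\,V$.

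\emph{Passing to the limit.} I would then fix $\varphi\in C_c^1(\Omega\times G_{m,n})$ and $i\in\{1,\dots,n\}$ and pass to the limit term by term in Mantegazza's identity for $V_h$: the term $\int P_{ij}\d_j\varphi\dint V_h$ converges by $V_h\wto^\ast V$ since its integrand lies in $C_c(\Omega\times G_{m,n})$, the terms $\int(\d_{P_{jk}}\varphi)A^{V_h}_{ijk}\dint V_h$ and $\int A^{V_h}_{jij}\varphi\dint V_h$ converge by $A^{V_h}V_h\wto^\ast A\,V$, and the right-hand side $-\int\varphi\dint(\d V_h)_i$ converges to $-\int\varphi\dint\Sigma_i$ by $\d V_h\wto^\ast\Sigma$. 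The limiting identity is precisely Mantegazza's with curvature functions $A$ and boundary measure $\Sigma$, so once $V$ is known to be integral we may set $A^V:=A$, $\d V:=\Sigma$ and conclude $V\in AV_m(\Omega)$ as well as $\d V_h\wto^\ast\d V$.

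\emph{Integrality of the limit, and the main obstacle.} This last point is the real content of the theorem. Each $V_h$ is integral, and combining the first variation formula \eqref{eq:Mantegazzaformula} with the pointwise estimate \eqref{eq:HA} and the $L^1$ bound above gives
\[
\|\delta V_h\|(\Omega)\le\|\bar H^{V_h}\|_{L^1_{V_h}(\Omega\times G_{m,n})}+\|\d V_h\|(\Omega\times G_{m,n})\le\sqrt2\,\|A^{V_h}\|_{L^1_{V_h}(\Omega\times G_{m,n})}+c,
\]
which is uniformly bounded; hence $\mu_{V_h}(W)+\|\delta V_h\|(W)$ is uniformly bounded for every $W\subset\subset\Omega$. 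Allard's compactness (closedness) theorem for integral varifolds \cite{Allard:72} then applies to the subsequence already extracted and forces $V\in IV_m(\Omega)$, which completes the proof. I expect this step to be the crux: passing to the limit in the curvature identity is routine because that identity is linear in $V$ and in the pair $(V,A^{V}V)$, but the fact that the weak-$\ast$ limit of integral varifolds is again integral is not elementary and relies on Allard's rectifiability/compactness machinery. It is also the point where $p>1$ is genuinely used twice --- once to obtain an absolutely continuous representative $A\,V$ of $\lim A^{V_h}V_h$ via equi-integrability, and once, through Hölder, to bound $\|\delta V_h\|$ by $\|A^{V_h}\|_{L^p_{V_h}}$; for $p=1$ the limit of $A^{V_h}V_h$ might develop a part singular with respect to $V$ and $V$ need not be a curvature varifold.
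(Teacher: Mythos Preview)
The paper does not supply its own proof of this statement: Theorem~\ref{thm:Mantegazza_Thm6} is quoted from Mantegazza \cite{Mantegazza:96} and used as a black box, so there is nothing internal to compare your argument against.

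That said, your sketch is essentially the argument Mantegazza gives. The three moves --- extract weak-$\ast$ limits of $V_h$, $A^{V_h}V_h$, and $\d V_h$; pass to the limit in the linear defining identity; and invoke Allard's integral compactness via the uniform first-variation bound --- are exactly the ingredients of the original proof, and your identification of integrality as the nontrivial step is accurate. One small caveat: the measure-function pair compactness you need here is the general $p>1$ version from \cite{Hutchinson:86}, not the $p=2$ statement recorded in this paper as Theorem~\ref{thm:mfp}(i); you cite Hutchinson directly, which is correct, but be aware that the paper's own toolbox as written would not quite cover $1<p<2$.
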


Here, {\it curvature varifold convergence}  
$$
V_h\wto^\ast V\quad\text{in}\quad AV_m(\Omega)
$$ 
is defined as the measure-function pairs convergence $(V_h,A^{V_h})\wto^\ast (V,A^{V})$ over $\Omega\times G_{m,n}$ with values in $\RR^{n\times n\times n}$, cf.\ Section \ref{sec:mfp}.

Varifolds including orientation were introduced by Hutchinson \cite{Hutchinson:86}. An {\it oriented $m$-varifold} in $\Omega$ is a Radon measure 
$$
V\in\Radon(\Omega\times G_{m,n}^{o}),
$$
with the {\it oriented Grassmannian} $G_{m,n}^{o}$ given by the set of all oriented $m$-dimensional linear subspaces of $\RR^n$. The elements $\xi\in G_{m,n}^{o}$ can be represented by $m$-vectors in $\RR^n$. The two-fold covering map 
$$
q\colon G_{m,n}^o\to G_{m,n},\quad q(\pm\xi)=P
$$ 
allows us to define the {\it unoriented counterpart} of the varifold $V$ as the pushforward measure of $V$ by  $\id_{\RR^n}\otimes q\colon \Omega\times G_{m,n}^o\to \Omega\times G_{m,n}$, which is a continuous and proper map. We shorten the notation and write
$$
q_\sharp V:=(\id_{\RR^n}\otimes q)_\sharp V\in\Radon(\Omega\times G_{m,n}).
$$
The masses of $V$ and $q_\sharp V$ coincide:
$$
\mu_{q_\sharp V}(\Omega)=\int_{\Omega\times G_{m,n}^o}\dint (q_\sharp V)=\int_{\Omega\times G_{m,n}}\dint V=\mu_V(\Omega).
$$ 

\begin{lemma}[Convergence of oriented and unoriented varifolds]\label{lem:orconv}
Let $h\to\infty$. If $V_h\wto^\ast V$ in $\Radon(\Omega\times G_{m,n}^o)$ then $q_\sharp V_{h}\wto^\ast q_\sharp V$ in $\Radon(\Omega\times G_{m,n})$. 
\end{lemma}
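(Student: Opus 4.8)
The plan is to prove this as a routine consequence of the definition of the pushforward and the continuity and properness of the map $\Phi := \id_{\RR^n}\otimes q\colon \Omega\times G_{m,n}^o\to \Omega\times G_{m,n}$. The key observation is that weak-$*$ convergence of Radon measures is preserved under pushforward by continuous proper maps; indeed $\Phi$ is exactly such a map since $q$ is a (two-fold) covering, hence continuous, and $q^{-1}(K)$ is compact whenever $K$ is (it is a closed subset of the compact $G_{m,n}^o$, which is compact as the double cover of the compact Grassmannian $G_{m,n}$). In particular $q_\sharp V_h, q_\sharp V\in\Radon(\Omega\times G_{m,n})$ so the statement is well posed.

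First I would recall that for any $\varphi\in C_c(\Omega\times G_{m,n})$ one has the change-of-variables identity
$$
\int_{\Omega\times G_{m,n}}\varphi\dint(q_\sharp V_h)=\int_{\Omega\times G_{m,n}^o}(\varphi\circ\Phi)\dint V_h,
$$
and likewise with $V$ in place of $V_h$. Next I would check that $\psi := \varphi\circ\Phi$ is an admissible test function for the convergence $V_h\wto^\ast V$ on $\Omega\times G_{m,n}^o$: it is continuous, being the composition of the continuous maps $\Phi$ and $\varphi$, and it has compact support, since $\supp\psi\subseteq\Phi^{-1}(\supp\varphi)$ and $\Phi^{-1}(\supp\varphi)$ is compact by properness of $\Phi$. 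Hence $\psi\in C_c(\Omega\times G_{m,n}^o)$.

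With this in hand the proof concludes in one line: applying the hypothesis $V_h\wto^\ast V$ to the test function $\psi$ gives
$$
\int_{\Omega\times G_{m,n}}\varphi\dint(q_\sharp V_h)=\int_{\Omega\times G_{m,n}^o}\psi\dint V_h\;\longrightarrow\;\int_{\Omega\times G_{m,n}^o}\psi\dint V=\int_{\Omega\times G_{m,n}}\varphi\dint(q_\sharp V),
$$
and since $\varphi\in C_c(\Omega\times G_{m,n})$ was arbitrary this is precisely $q_\sharp V_h\wto^\ast q_\sharp V$ in $\Radon(\Omega\times G_{m,n})$. There is no real obstacle here; the only point requiring a moment of care is the compactness of the support of $\varphi\circ\Phi$, i.e.\ the properness of $\Phi$, which is why I would state it explicitly rather than take it for granted — it is this property (already invoked in the excerpt just before the lemma) that makes the pushforward of a test function again a test function, and it is genuinely used.
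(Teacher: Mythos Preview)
Your proof is correct and follows essentially the same route as the paper: pull back a test function $\varphi\in C_c(\Omega\times G_{m,n})$ to $\psi=\varphi\circ\Phi\in C_c(\Omega\times G_{m,n}^o)$ and apply the hypothesis $V_h\wto^\ast V$ to $\psi$. Your version is in fact slightly more careful, since you explicitly justify $\psi\in C_c$ via the properness of $\Phi$ (the paper leaves this implicit and also inserts a spurious factor $\tfrac12$, attributed to the symmetry of $\psi$, which is not part of the pushforward change-of-variables formula but harmlessly cancels on both sides).
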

\begin{proof}
Let $\varphi\in C_c(\Omega\times G_{m,n})$ and consider $\psi\in C_c(\Omega\times G_{m,n}^o)$ given by 
$$
\psi(x,\xi):=\varphi(x,q(\xi))=\varphi(x,q(-\xi))=\psi(x,-\xi).
$$ 
Then, with $P=q(\pm\xi)$,
\begin{multline}
(q_\sharp V_{h})(\varphi)
	=\int_{\Omega\times G_{m,n}}\varphi(x,P)\dint(q_\sharp V_{h})(x,P)
	=\frac{1}{2}\int_{\Omega\times G_{m,n}^o}\varphi(x,q(\xi))\dint V_{h}(x,\xi)
	=\frac{1}{2}V_h(\psi) \\
	\to\frac{1}{2}V(\psi)
	=\frac{1}{2}\int_{\Omega\times G_{m,n}^o}\varphi(x,q(\xi))\dint V(x,\xi)
	=\int_{\Omega\times G_{m,n}}\varphi(x,P)\dint(q_\sharp V)(x,P)=(q_\sharp V)(\varphi),\nn
\end{multline}
where the factor $\frac{1}{2}$ comes from the symmetry of $\psi$.
\end{proof}

The classical divergence theorem for closed hypersurfaces of $\RR^n$ motivates us to assign an {\it enclosed volume} to an oriented $(n-1)$-varifold $V\in \Radon(\Omega\times G_{n-1,n}^o)$ by 
$$
\encvol(V):=\frac{1}{n}\int_{\Omega\times G_{n-1,n}^o}x\cdot(\ast\,\xi)\dint V(x,\xi).
$$
Here, $\ast$ stands for the Hodge operator from exterior calculus, which is a bijection between $m$- and $(n-m)$-(co-)vectors in $\RR^n$, $0\leq m\leq n$. In particular, in the relevant case $n=3$ and $m=2$, we have $\ast(a\wedge b)=a\times b$ for $a,b\in\RR^3$. Linearity of $\encvol$ directly implies that 
\begin{equation}\label{eq:enclosedvolumeconv}
\text{$V_h\wto^\ast V$ in $\Radon(\Omega\times G_{n-1,n}^o)$, $\:\supp (V_h)$ compact}\:\:\Longrightarrow\:\:\encvol(V_h)\to \encvol(V).
\end{equation}

Similarly to an integral varifold, an {\it oriented integral $m$-varifold} in $\Omega$ is defined by 
$$
V(\psi):=\int_M\left(\psi(x,\xi(x))\theta_+(x)+\psi(x,-\xi(x))\theta_-(x)\right)\dint\Haus^m(x) \quad\text{for all}\quad\psi\in C_c(\Omega\times G_{m,n}^o),
$$
where $M\sbs\Omega$ is (countably) $m$-rectifiable, $\theta_\pm\colon M\to\NN_0$, called {\it multiplicities}, are locally summable w.r.t.\ $\Haus^m\llc M$, and $\xi$ is the orientation as for a rectifiable current (cf.\ Section \ref{sec:currents}). We will use the notation $V=v^o(M,\xi,\theta_+,\theta_-)$ to indicate such a varifold and we denote by $IV_m^o(\Omega)$ the space of all oriented integral $m$-varifolds in $\Omega$. The unoriented counterpart of $V$ is given by $q_\sharp V=v(M,\theta_++\theta_-)\in IV_m(\Omega)$. Moreover, one can associate to $V$ the integer multiplicity $m$-rectifiable current $\curr{V}:=[M,\xi,\theta_+-\theta_-]$. As $\theta_\pm$ take values in the nonnegative integers, the current mass is bounded by the mass of $V$: 
\begin{equation}\label{eq:massbound}
\mathbb{M}_{\curr{V}}(\Omega)=\int_M|\theta_+-\theta_-|\dint\Haus^m
\leq\int_M(\theta_++\theta_-)\dint\Haus^m=\mu_{q_\sharp V}(\Omega)=\mu_V(\Omega).
\end{equation} 

In \cite{Hutchinson:86}, Hutchinson presented the following compactness (closedness) result. 

\begin{theorem}[Compactness for oriented varifolds]\label{thm:Hutchinson_Thm3} 
Let $(V_h)$ be a sequence in $IV_m^o(\Omega)$ with 
$$
\mu_{V_h}(\Omega)+\|\delta (q_\sharp V_h)\|(\Omega)+\MM_{\d\curr{V_h}}(\Omega)\le c
$$
for some $c>0$. Then there exists $V\in IV_m^o(\Omega)$, such that up to a subsequence (not relabeled), $V_h \rightharpoonup^* V$ in $\Radon(\Omega\times G_{m,n}^o)$.
\end{theorem}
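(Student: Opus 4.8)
The plan is to run the Direct Method at the level of Radon measures on $\Omega\times G_{m,n}^o$, recovering the oriented integral structure of the limit by combining Allard's compactness theorem for integral varifolds \cite{Allard:72} — applied to the unoriented counterparts $q_\sharp V_h$ — with the Federer--Fleming Theorem \ref{thm:FF}, applied to the associated currents $\curr{V_h}$. Since $V_h(\Omega\times G_{m,n}^o)=\mu_{V_h}(\Omega)\le c$, the sequence $(V_h)$ is bounded in $\Radon(\Omega\times G_{m,n}^o)$, so a first subsequence (not relabeled) satisfies $V_h\wto^\ast V$ for some $V\in\Radon(\Omega\times G_{m,n}^o)$; it then remains to prove $V\in IV_m^o(\Omega)$.

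The first task is to analyse the unoriented counterpart. By Lemma \ref{lem:orconv}, $q_\sharp V_h\wto^\ast q_\sharp V$, and the $q_\sharp V_h=v(M_h,\theta_+^h+\theta_-^h)$ are integral $m$-varifolds with uniformly bounded mass $\mu_{q_\sharp V_h}(\Omega)=\mu_{V_h}(\Omega)\le c$ and uniformly bounded first variation $\|\delta(q_\sharp V_h)\|(\Omega)\le c$ by hypothesis. Allard's compactness theorem then identifies the limit as $q_\sharp V=v(M,\theta)\in IV_m(\Omega)$ for an $m$-rectifiable $M\sbs\Omega$ and $\theta\colon M\to\NN$. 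Disintegrating $V$ along the continuous and proper map $\id_{\RR^n}\otimes q$ — which pushes $V$ forward onto $q_\sharp V$ — and using that $q_\sharp V$ is carried by the graph $\{(x,T_xM):x\in M\}$, whose $q$-fibres are the antipodal orientation pairs $\{\pm\xi(x)\}$ of $T_xM$, one obtains a Borel orientation $\xi$ of $M$ and nonnegative Borel functions $\theta_\pm$ with $\theta_++\theta_-=\theta$ and $V=v^o(M,\xi,\theta_+,\theta_-)$ — so far with only real-valued multiplicities.

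Next I would bring in the currents. For $\omega\in\mathcal D^m(\Omega)$ the function $\psi_\omega(x,\xi):=\langle\omega(x),\xi\rangle$ lies in $C_c(\Omega\times G_{m,n}^o)$ and is odd in $\xi$, and directly from the definitions $\curr{V_h}(\omega)=V_h(\psi_\omega)$; hence $\curr{V_h}(\omega)\to V(\psi_\omega)=\int_M\langle\omega(x),\xi(x)\rangle(\theta_+-\theta_-)(x)\dint\Haus^m(x)$. On the other hand, \eqref{eq:massbound} and the hypothesis give $\MM_{\curr{V_h}}(\Omega)+\MM_{\d\curr{V_h}}(\Omega)\le 2c$ — and since $\MM_{\d\curr{V_h}}<\infty$ the $\curr{V_h}$ are integral by boundary rectifiability — so Theorem \ref{thm:FF} yields, along a further subsequence, $\curr{V_h}\wto^\ast T$ with $T$ an integral $m$-current. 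Uniqueness of the weak-$\ast$ limit forces $T=[M,\xi,\theta_+-\theta_-]$, so that $\theta_+-\theta_-$ coincides $\Haus^m$-a.e.\ on $M$ with an integer-valued function. (The total masses and $\encvol$ also pass to the limit by linearity, but play no role here.)

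The main obstacle is the last step: upgrading ``$\theta_++\theta_-\in\NN$ and $\theta_+-\theta_-\in\mathbb{Z}$'' to ``$\theta_\pm\in\NN_0$'', i.e.\ ruling out genuinely half-integer orientation multiplicities — note that $(\theta_+,\theta_-)=(3/2,1/2)$ has both integral sum and integral difference, so integrality does \emph{not} follow from the information gathered so far. I expect this to require an Allard-type blow-up analysis at $\Haus^m$-a.e.\ $x_0\in M$, controlling simultaneously the tangent measure of $q_\sharp V$ (namely $\theta(x_0)$ copies of the plane $T_{x_0}M$) and the tangent current of $\curr{V}$ (that plane with constant integer multiplicity), together with the arguments of Hutchinson \cite{Hutchinson:86} that force the fibre weights of an oriented integral varifold to be nonnegative integers. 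Once that integrality is in place, $V=v^o(M,\xi,\theta_+,\theta_-)\in IV_m^o(\Omega)$, completing the proof.
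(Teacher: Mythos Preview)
The paper does not actually prove this theorem: it is stated as a result of Hutchinson \cite{Hutchinson:86} and quoted without proof, so there is no ``paper's own proof'' to compare against. What you have written is a reasonable outline of Hutchinson's original strategy --- extract a Radon-measure limit, use Allard compactness on $q_\sharp V_h$ to get $\theta_++\theta_-\in\NN$, use Federer--Fleming on $\curr{V_h}$ to get $\theta_+-\theta_-\in\mathbb{Z}$, then argue integrality of each $\theta_\pm$ separately.

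You are right that the last step is the genuine content, and your half-integer example $(\theta_+,\theta_-)=(3/2,1/2)$ correctly shows that integrality of the sum and the difference is insufficient on its own. This is precisely the point where Hutchinson's argument does real work: at $\Haus^m$-a.e.\ $x_0\in M$ one blows up the \emph{oriented} varifolds $V_h$ themselves (not just $q_\sharp V_h$ and $\curr{V_h}$) and uses the monotonicity formula together with the structure of oriented integral varifolds supported on a single plane to conclude that the limiting fibre weights over $\{\pm\xi(x_0)\}$ are nonnegative integers. Your sketch defers exactly this step to ``the arguments of Hutchinson'', which is honest but means the proposal is not a self-contained proof --- the substantive closedness argument is still missing. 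Since the paper itself only cites the result, your write-up is already more detailed than what the paper provides; to make it a complete proof you would need to reproduce Hutchinson's blow-up/integrality lemma rather than invoke it.
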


We eventually introduce the class of {\it oriented curvature $m$-varifolds},
\begin{equation}\label{eq:AVo}
AV_m^o(\Omega):=\{V \in IV_m^o(\Omega) : q_\sharp V \in AV_m(\Omega)\}.
\end{equation}
Given $V_h,V\in AV_m^o(\Omega)$ and $h\to\infty$, we say that 
$$
V_h\wto^\ast V\:\:\text{in}\:\:AV_m^o(\Omega)
$$ 
if $V_h\wto^\ast V \:\:\text{in}\:\:\Radon(\Omega\times G_{m,n}^o)$ and $q_\sharp V_{h}\wto^\ast q_\sharp V$ in $AV_m(\Omega)$.

If  $M\sbs\Omega$ is a smooth hypersurface of $\RR^n$ with unit normal $\nu \colon M \to \mathbb S^{n-1}$ 
we associate to it the canonical oriented curvature $(n-1)$-varifold 
\begin{equation}
\label{eq:VM}
V_M:=v^o(M,\xi,1,0)\in AV_{n-1}^o(\Omega), \quad \nu=\ast\xi.
\end{equation}
Using smooth differential geometry (see for instance Hutchinson \cite{Hutchinson:86}) it can be verified that 
$$
(A^{q_\sharp V_M})_{ijk} = A_{ijk} := P_{il}\partial_lP_{jk}\quad\text{and} \quad \bar H^{q_\sharp V_M}= \bar H :=  H\nu,
$$
where $P=\mathbb{I}-\nu\otimes \nu$ is the tangential projection matrix and $H$ is the scalar mean curvature of $M$. In particular, in case of smooth surfaces $M$ in $\RR^3$ with $\kappa_1$, $\kappa_2$ denoting their principal curvatures, we have $H=\kappa_1+\kappa_2$ and $K=\kappa_1\kappa_2$ defines the Gauss curvature of $M$.  Since  $\frac{1}{2}(\kappa_1+\kappa_2)^2-\frac{1}{2}(\kappa_1^2+\kappa_2^2)=\kappa_1\kappa_2$,  we have 
$$
K=\frac{1}{2}|\bar H|^2-\frac{1}{2}|\II|^2
=\frac{1}{2}|\bar H|^2-\frac{1}{4}|A|^2.
$$
For the second equality we employed $|A|^2=2|\II|^2$, which is a consequence of $A_{ijk}=\II^j_{ik}+\II^k_{ij}$ and the mapping properties of 
the second fundamental form $\II=-(\nabla^P\nu)\otimes\nu$,  i.e.\ in components,  $\II_{ij}^k=-(\d^P_j\nu_i)\nu_k$ with the tangential partial derivative $\d_j^P=P_{jk}\d_k$.

\section{The Canham-Helfrich functional for oriented curvature varifolds}\label{sec:formulation}

We introduce a formulation of the Canham-Helfrich energy in the framework of oriented  curvature varifolds,  see \eqref{eq:AVo}.  Let $M$ be a compact smooth surface embedded in an open subset $\Omega\sbs\RR^3$ and assume for the time being that $V\in AV_2^o(\Omega)$ is the associated varifold \eqref{eq:VM}. Then the quantities 
\[
\bar H_i=H\,\nu_i=\sum_{j=1}^3 A_{jij}\:\: (i=1,2,3), \quad  K=\frac{1}{2} |\bar H|^2 -\frac{1}{4}|A|^2,\quad \text{and}\quad \nu=\ast\xi
\]
are defined for $V$ and one can rewrite the integrand of the functional \eqref{CH} as
\begin{align*}
\frac{\beta}{2}\,(H-H_0)^2+\gamma\, K
&=\frac{\beta}{2}\,|\bar{H}-(\ast\xi) H_0|^2+\frac{\gamma}{2}|\bar{H}|^2-\frac{\gamma}{4}|A|^2\nn\\
&=\sum_{i=1}^3\left(\frac{\beta}{2}\,\left(\bar{H}_i-(\ast\xi)_i H_0\right)^2+\frac{\gamma}{2}\bar{H}_i^2-\frac{\gamma}{4}\sum_{j,k=1}^3 A_{ijk}^2\right).
\end{align*} 
The latter suggests to define the {\it generalized Canham-Helfrich functional} as 
\begin{equation}\label{eq:FCH}
\FCH\colon AV_2^o(\Omega)\to\RR,\qquad 
\FCH(V):=\int_{\Omega\times G_{2,3}^o} f_{\CH}\big(\xi,A^{q_\sharp V}(x,q(\xi))\big)\dint V(x,\xi),
\end{equation}
where the density $f_\CH\colon G_{2,3}^o\times\RR^{3\times 3\times 3}\to\RR$ is given by 
$$
f_\CH(\xi,A):=\sum_{i=1}^3\left(\frac{\beta}{2}\left(\Bigg(\sum_{j=1}^3A_{jij}\Bigg)-(\ast\,\xi)_iH_0\right)^2
+\frac{\gamma}{2}\Bigg(\sum_{j=1}^3A_{jij}\Bigg)^2
-\frac{\gamma}{4}\sum_{j,k=1}^3 A_{ijk}^2\right).
$$
Recall from Section \ref{sec:varifolds} that the elements of $AV_2^o(\Omega)$  are oriented integral $2$-varifolds  
$$
V=v^o(M,\xi,\theta_+,\theta_-)\in IV_2^o(\Omega)
$$
in the sense of Hutchinson \cite{Hutchinson:86}, whose unoriented counterparts   
$$
q_\sharp V=v(M,\theta_++\theta_-)\in AV_2(\Omega)
$$
are curvature $2$-varifolds in the sense of Mantegazza \cite{Mantegazza:96}.
Let 
$$
P\colon M\to G_{2,3}\quad\text{and}\quad \pm\xi\colon M\to G_{2,3}^o
$$ 
denote the mappings that assign the approximate tangent space $P(x):=T_xM$ and its oriented counterparts $\pm\xi(x)$ respectively to $(\Haus^2\llc M)$-a.e.\ $x\in M$. Then $P$ and $\xi$ are related by the covering map $q\colon G_{2,3}^o\to G_{2,3}$, namely $P(x)=q(\xi(x))=q(-\xi(x))$. It follows that the generalized Canham-Helfrich functional \eqref{eq:FCH} takes the form 
\begin{align}
\FCH(V)&=\int_M\Big(f_{\CH}(\xi(x),A^{q_\sharp V}(x,P(x)))\,\theta_+(x)
+f_{\CH}(-\xi(x),A^{q_\sharp V}(x,P(x)))\,\theta_-(x)\Big)\dint\Haus^2(x) \nonumber\\
&=\sum_{i=1}^3\int_M\Bigg(
\frac{\beta}{2}\Big({\bar H}_i^{q_\sharp V}(\cdot,P) -(\ast\,\xi)_iH_0\Big)^2\,\theta_+
+\frac{\beta}{2}\Big({\bar H}_i^{q_\sharp V}(\cdot,P)+(\ast\,\xi)_iH_0\Big)^2\,\theta_-\nonumber\\
&\qquad\qquad\qquad +\bigg(\frac{\gamma}{2}\big({\bar H}_i^{q_\sharp V}(\cdot,P)\big)^2
-\frac{\gamma}{4}\sum_{j,k=1}^3 \big(A_{ijk}^{q_\sharp V}(\cdot,P)\big)^2\bigg)(\theta_++\theta_-)\Bigg)\dint\Haus^2,\label{eq:FCHlong}
\end{align}
where for $\Haus^2$-a.e.\ $x\in M$ and $i=1,2,3$,
\begin{equation*}
{\bar H}_i^{q_\sharp V}(x,P(x))=\sum_{j=1}^3A_{jij}^{q_\sharp V}(x,P(x)).
\end{equation*}
We want to investigate now coercivity and lower semicontinuity of $\FCH$ with respect to the convergence in $AV_2^o(\Omega)$. Let us point out that the lower semicontinuity of functionals of Canham-Helfrich type is a delicate issue, for counterexamples in specific situations are known \cite[Rem. (ii), p.\ 550]{GrosseBrauckmann:93}. 

Firstly, we first establish conditions on the material parameters $\beta$ and $\gamma$ that guarantee strict convexity of the energy density in the curvature variables. 

\begin{prop}[Strict convexity of $f_{\CH}$]\label{prop:convexity}
If $\beta$ and $\gamma$ satisfy the relation 
\begin{equation}\label{eq:bounds}
-\frac{6}{5}\,\beta<\gamma<0,
\end{equation}
then 
$f_\CH(\xi,\cdot)\colon\RR^{3\times 3\times 3}\to\RR$ is strictly convex for all $\xi\in G_{2,3}^o$. Moreover, there exist $c_1,c_2>0$ such that
\begin{equation}\label{eq:estimate}
\|A^{q_\sharp V}\|^2_{L^2_{q_\sharp V}(\Omega\times G_{2,3})}\leq c_1\left(\FCH(V)+c_2\,\mu_{V}(\Omega)\right) \quad\text{for all}\quad V\in AV_2^o(\Omega).
\end{equation}
\end{prop}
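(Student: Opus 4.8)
The plan is to reduce the convexity question to a pointwise algebraic statement about the quadratic form
\[
q(A):=\sum_{i=1}^3\left(\frac{\beta}{2}\Big(\sum_{j=1}^3A_{jij}\Big)^2+\frac{\gamma}{2}\Big(\sum_{j=1}^3A_{jij}\Big)^2-\frac{\gamma}{4}\sum_{j,k=1}^3A_{ijk}^2\right),
\]
since $f_\CH(\xi,\cdot)$ differs from this by an affine term in $A$ (the linear and constant contributions coming from the $-(\ast\,\xi)_iH_0$ shift), and affine perturbations do not affect convexity. So it suffices to show $q$ is positive definite on $\RR^{3\times 3\times 3}$ under \eqref{eq:bounds}. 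First I would split each tensor entry according to whether the repeated-index structure $A_{jij}$ is involved: decompose $\RR^{3\times 3\times 3}$ orthogonally into the subspace spanned by the ``trace-type'' directions that feed into $\bar H_i=\sum_jA_{jij}$ and its complement. On the complement, $q$ acts as $-\frac{\gamma}{4}|A|^2$, which is positive definite precisely because $\gamma<0$. On the trace part, one has to be a little careful because the same entry $A_{iii}$ contributes both to $\sum_jA_{jij}$ and to the $\sum_{j,k}A_{ijk}^2$ penalization, and entries like $A_{jij}$ with $j\neq i$ also appear in both; this coupling is exactly where the constant $6/5$ enters.

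**Key steps.** (1) Rewrite $q(A)=\sum_i\big(\tfrac{\beta+\gamma}{2}H_i^2-\tfrac{\gamma}{4}|A_{i\cdot\cdot}|^2\big)$ where $H_i=\sum_jA_{jij}$ and $|A_{i\cdot\cdot}|^2=\sum_{j,k}A_{ijk}^2$; note the three index-blocks $i=1,2,3$ are NOT decoupled because $H_i$ involves $A_{jij}$ for all $j$, i.e.\ entries from block $j$. (2) For fixed $(i,k)$-type structure, isolate the vector $v^{(i)}=(A_{1i1},A_{2i2},A_{3i3})$—wait, more precisely collect, for each $i$, the relevant entries and estimate $H_i^2=\big(\sum_jA_{jij}\big)^2\le 3\sum_jA_{jij}^2$ by Cauchy–Schwarz; this gives $q(A)\ge\sum_i\big(\tfrac{3(\beta+\gamma)}{2}\wedge 0\big)\sum_jA_{jij}^2-\tfrac{\gamma}{4}|A|^2$ only when $\beta+\gamma\ge0$, which is too crude. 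Instead, the sharp approach is to diagonalize: since $\beta+\gamma$ may be negative (indeed $\gamma>-\tfrac65\beta$ allows $\beta+\gamma$ of either sign), write $q$ as a genuine quadratic form on the finite-dimensional space, group the nine entries $\{A_{jij}:i,j\in\{1,2,3\}\}$ into the matrix $M=(M_{ji})=(A_{jij})$, observe $H_i=\sum_jM_{ji}=(\mathbf 1^\top M)_i$, so $\sum_iH_i^2=|M^\top\mathbf 1|^2=\mathbf 1^\top MM^\top\mathbf 1$, and note $\sum_{i,j}A_{jij}^2=|M|^2$ plus the diagonal entries $A_{iii}$ are the diagonal of $M$. (3) Bound $\mathbf 1^\top MM^\top\mathbf 1\le 3\,|M\mathbf 1|^2/|\mathbf 1|^2\cdot|\mathbf 1|^2$— no; just use $|M^\top\mathbf 1|^2\le |\mathbf 1|^2\|M\|_{op}^2\le 3|M|^2$, giving $q(A)\ge\big(\tfrac{3(\beta+\gamma)}{2}\big)_-|M|^2-\tfrac{\gamma}{4}|M|^2-\tfrac{\gamma}{4}|A^{\perp}|^2$ where $A^\perp$ are the remaining entries. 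Positivity on the $A^\perp$ part needs only $\gamma<0$; positivity of the $|M|^2$-coefficient needs $-\tfrac\gamma4+\tfrac{3(\beta+\gamma)}{2}>0$ when $\beta+\gamma<0$, i.e. $-\tfrac\gamma4>-\tfrac{3(\beta+\gamma)}{2}$, i.e. $6\beta+6\gamma>-\gamma/... $ — tracking the arithmetic carefully yields exactly $\gamma>-\tfrac65\beta$. When $\beta+\gamma\ge0$ positivity is automatic from $\gamma<0$. (4) Strictness: the coefficient is strictly positive, so $q(A)\ge\lambda|A|^2$ for some $\lambda=\lambda(\beta,\gamma)>0$; strict convexity of $f_\CH(\xi,\cdot)$ follows since its Hessian equals $2q$ as a form.

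**Coercivity estimate \eqref{eq:estimate}.** Once $f_\CH(\xi,A)\ge\lambda|A|^2-C(|A|+1)$ pointwise (the lower-order terms bounded using $|(\ast\,\xi)_iH_0|\le|H_0|$ and $|\bar H|\le\sqrt2|A|$ via \eqref{eq:HA} applied blockwise), absorb the linear term by Young's inequality: $\lambda|A|^2-C|A|-C\ge\tfrac\lambda2|A|^2-C'$. Integrating over $\Omega\times G_{2,3}^o$ against $V$, using $\int|A^{q_\sharp V}|^2\dint V=\int|A^{q_\sharp V}|^2\dint q_\sharp V=\|A^{q_\sharp V}\|^2_{L^2_{q_\sharp V}}$ and $\int\dint V=\mu_V(\Omega)$, yields $\tfrac\lambda2\|A^{q_\sharp V}\|^2_{L^2_{q_\sharp V}}\le\FCH(V)+C'\mu_V(\Omega)$, which is \eqref{eq:estimate} with $c_1=2/\lambda$, $c_2=C'$.

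**Main obstacle.** The delicate point is step (3): getting the \emph{sharp} threshold $6/5$ rather than a suboptimal constant. A lazy Cauchy–Schwarz bound $H_i^2\le 3\sum_jA_{jij}^2$ overcounts and produces a worse condition; the factor $3$ must be applied to the combined vector $M^\top\mathbf 1$ rather than summed entrywise, and one must verify that the extremal configuration (where equality is approached) is consistent with the rest of the form so that the constant $6/5$ is not further degraded by interaction with the $A_{iii}$ diagonal terms. I expect the cleanest route is to just compute the smallest eigenvalue of the explicit $27\times 27$ (block-diagonalizing to small blocks) matrix of $2q$ and read off the sign condition; the block structure reduces this to a handful of $1\times1$ and $3\times 3$ eigenvalue computations, where the binding one gives the $6/5$.
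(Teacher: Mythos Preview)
Your strategy is correct and would succeed, but it differs from the paper's in one structural respect worth noting. In step (2) you write ``the three index-blocks $i=1,2,3$ are NOT decoupled because $H_i$ involves $A_{jij}$ for all $j$''. This is true only because you group $|A|^2$ by the \emph{first} index, $|A_{i\cdot\cdot}|^2=\sum_{j,k}A_{ijk}^2$. The paper's key observation is the trivial identity $\sum_{i,j,k}A_{ijk}^2=\sum_{i,j,k}A_{jik}^2$, which lets one group by the \emph{middle} index instead. Setting $a^i_{jk}:=A_{jik}$, both $H_i=\sum_ja^i_{jj}$ and the $|A|^2$-contribution $\sum_{j,k}(a^i_{jk})^2$ depend only on $a^i$, so the quadratic form splits as $q(A)=\sum_{i=1}^3 f(a^i)$ with three \emph{identical and independent} $9$-variable forms. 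The paper then simply writes down the $9\times 9$ Hessian of $f$ (a $3\times 3$ block with entries $\tfrac{2\beta+\gamma}{2}$ on the diagonal and $\beta+\gamma$ off, plus six scalar entries $-\tfrac{\gamma}{2}$) and reads off the eigenvalues $\tfrac12(6\beta+5\gamma)$ and $-\tfrac{\gamma}{2}$. This is exactly the ``cleanest route'' you suspect at the end, made short by the decoupling you thought was unavailable.

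Your alternative route via the trace matrix $M_{ji}=A_{jij}$ and the Cauchy--Schwarz bound $\sum_iH_i^2=|M^\top\mathbf{1}|^2\le 3|M|^2$ is in fact \emph{sharp} and yields precisely $6\beta+5\gamma>0$, despite your hesitation: equality holds for $M$ with constant columns ($A_{jij}=c_i$, all other entries zero), and plugging this extremal configuration into $q$ gives $q(A)=\tfrac34(6\beta+5\gamma)\sum_ic_i^2$. So your ``main obstacle'' is not one --- the lazy bound is already optimal here, and no ``interaction with the $A_{iii}$ diagonal terms'' degrades it. What each approach buys: the paper's decoupling gives the full spectrum (hence the explicit $\lambda$ in $q(A)\ge\lambda|A|^2$) with no inequalities at all; your approach avoids writing down matrices but requires checking sharpness separately. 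The coercivity argument for \eqref{eq:estimate} is the same in both.
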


\begin{proof} We split $f_\CH$ in the sum of quadratic and linear terms in $A$. By using the fact that $\sum_{i,j,k=1}^3 A_{ijk}^2=\sum_{i,j,k=1}^3A_{jik}^2$ we write
\begin{align*}
&f_\CH(\xi,A)
=\sum_{i=1}^3\Bigg(\frac{\beta}{2}\bigg(\Big(\sum_{j=1}^3A_{jij}\Big)-(\ast\,\xi)_iH_0\bigg)^2
+\frac{\gamma}{2}\Big(\sum_{j=1}^3A_{jij}\Big)^2
-\frac{\gamma}{4}\sum_{j,k=1}^3 A_{ijk}^2\Bigg)\\
&=\sum_{i=1}^3\underbrace{\Bigg(\frac{\beta+\gamma}{2}\Bigg(\sum_{j=1}^3A_{jij}\Bigg)^2
-\frac{\gamma}{4}\sum_{j,k=1}^3 A_{jik}^2\Bigg)}_{=:\,f(a^i)}
+\sum_{i=1}^3\underbrace{\Bigg(-\beta\Big(\sum_{j=1}^3A_{jij}\Big)(\ast\,\xi)_iH_0+\frac{\beta}{2}(\ast\,\xi)_i^2H_0^2\Bigg)}_{=:\,l_i(\xi,a^i)}\\
&=\sum_{i=1}^3 \left(f(a^i)+l_i(\xi,a^i)\right).
\end{align*}
For the last equality we introduced
$$
a^i:=(a_{11}^i, a_{22}^i, a_{33}^i, a_{12}^i, a_{13}^i, a_{23}^i, a_{21}^i, a_{31}^i, a_{32}^i)\in\RR^{9}
$$
with $a_{jk}^i:=A_{jik}$. Let us now check that $f: \RR^9 \to \RR$ is strictly convex. Indeed, one has that  
\begin{align*}
f(a)&=\frac{\beta+\gamma}{2}\Big(\sum_{j=1}^3a_{jj}\Big)^2-\frac{\gamma}{4}\sum_{j,k=1}^3 a_{jk}^2\\
&=\frac{\beta+\gamma}{2}\left(a_{11}^2+a_{22}^2+a_{33}^2+2a_{11}a_{22}+2a_{11}a_{33}+2a_{22}a_{33}\right)\\
&\qquad -\frac{\gamma}{4}\left(a_{11}^2+a_{22}^2+a_{33}^2+a_{12}^2+a_{13}^2+a_{23}^2+a_{21}^2+a_{31}^2+a_{32}^2\right)\\
&=\frac{2\beta+\gamma}{4}\left(a_{11}^2+a_{22}^2+a_{33}^2\right)+(\beta+\gamma)(a_{11}a_{22}+a_{11}a_{33}+a_{22}a_{33})\\
&\qquad -\frac{\gamma}{4}\left(a_{12}^2+a_{13}^2+a_{23}^2+a_{21}^2+a_{31}^2+a_{32}^2\right).
\end{align*}	
By computing partial derivatives one obtains 
\begin{align*}
\frac{\d f}{\d a_{11}}&=\frac{2\beta+\gamma}{2}\,a_{11}+(\beta+\gamma)\left(a_{22}+a_{33}\right),
\qquad
\frac{\d f}{\d a_{12}}=-\frac{\gamma}{2}\,a_{12}, \\ 
\frac{\d^2 f}{\d a_{11}^2}&=\frac{2\beta+\gamma}{2},\qquad 
\frac{\d^2 f}{\d a_{11}\d a_{22}}=\beta+\gamma,\qquad\quad
\frac{\d^2 f}{\d a_{12}^2}=-\frac{\gamma}{2}  
\end{align*}
and analogously for the other components. The Hessian
$D^2 f(a)\in\RR^{9\times 9}$ then reads 
$$
D^2 f(a)=\begin{pmatrix}
\frac{2\beta+\gamma}{2} & \beta+\gamma & \beta+\gamma & 0 & 0 & 0 & 0 & 0 & 0\\
\beta+\gamma &\frac{2\beta+\gamma}{2} & \beta+\gamma & 0 & 0 & 0 & 0 & 0 & 0\\
\beta+\gamma & \beta+\gamma &\frac{2\beta+\gamma}{2} & 0 & 0 & 0 & 0 & 0 & 0\\
0 & 0 & 0 & -\frac{\gamma}{2} & 0 & 0 & 0 & 0 & 0\\
0 & 0 & 0 & 0 & -\frac{\gamma}{2} & 0 & 0 & 0 & 0\\
0 & 0 & 0 & 0 & 0 & -\frac{\gamma}{2} & 0 & 0 & 0\\
0 & 0 & 0 & 0 & 0 & 0 & -\frac{\gamma}{2} & 0 & 0\\
0 & 0 & 0 & 0 & 0 & 0 & 0 & -\frac{\gamma}{2} & 0\\
0 & 0 & 0 & 0 & 0 & 0 & 0 & 0 & -\frac{\gamma}{2}
\end{pmatrix}. 
$$
Its eigenvalues are  
$$
\lambda_1=\frac{1}{2}(6\beta+5\gamma)\qquad\text{and}\qquad
\lambda_{2} = \lambda_3 = \dots = \lambda_9 =-\frac{\gamma}{2}.
$$
Thus the Hessian is positive definite ($\lambda_k>0$ for $1\leq k\leq 9$) if and only if $6\beta+5\gamma>0$ and $\gamma<0$, which is equivalent to $-6\beta<5\gamma<0$, namely \eqref{eq:bounds}. The integrand $f_\CH(\xi,\cdot)$ is hence a strictly convex second-order polynomial for all $\xi\in G_{2,3}^o$. Moreover, $\xi$ only occurs explicitly in the term $\sum_{i=1}^3l_i(\xi,a^i)$, which is smooth, being a second-order polynomial in the components of $\xi$. Consequently, there exist a constant $c_1>0$ and continuous maps $c_2^\pm\colon G_{2,3}^o\to(0,\infty)$,  such that
$$
f_\CH(\xi,A)\geq  \frac{1}{c_1}|A|^2-c_2^+(\xi),\qquad f_\CH(-\xi,A)\geq  \frac{1}{c_1}|A|^2-c_2^-(\xi).
$$
Since $\theta_\pm(x)\geq 0$ for $\Haus^2$-a.e.\ $x\in M$, from \eqref{eq:FCHlong} we then get
\begin{align*}
f_\CH(\xi,A)\theta_++f_\CH(-\xi,A)\theta_-
&\geq \left(\frac{1}{c_1}|A|^2-c_2^+(\xi)\right)\theta_++\left(\frac{1}{c_1}|A|^2-c_2^-(\xi)\right)\theta_-\\
&=\frac{1}{c_1}|A|^2(\theta_++\theta_-)-(c_2^+(\xi)\theta_++c_2^-(\xi)\theta_-)\nn\\
&\geq\frac{1}{c_1}|A|^2(\theta_++\theta_-)-c_2(\theta_++\theta_-)
=\left(\frac{1}{c_1}|A|^2-c_2\right)(\theta_++\theta_-)\nn
\end{align*}
where $ c_2:=\max_{\xi\in G_{2,3}^o}\max\{c_2^+(\xi),c_2^-(\xi)\}$.
Therefore, we conclude that
\begin{align*}
\FCH(V)&\geq\int_M\left(\frac{1}{c_1}|A^{q_\sharp V}(x,q(\xi(x)))|^2-c_2\right)(\theta_++\theta_-)(x)\dint\Haus^2(x)\nn\\
&=\int_{\Omega\times G_{2,3}} \left(\frac{1}{c_1}|A^{q_\sharp V}(x,P)|^2-c_2\right)\dint(q_\sharp V)(x,P)\\
&=\frac{1}{c_1}\|A^{q_\sharp V}\|^2_{L^2_{q_\sharp V}(\Omega\times G_{2,3})}-c_2\,\mu_V(\Omega)
\end{align*}
and the estimate \eqref{eq:estimate} is proved.
\end{proof}

Secondly, we state the main result of this section:
 
\begin{theorem}[Lower semicontinuity]\label{thm:lsc}
Assume \eqref{eq:bounds}. Then $\FCH$ is lower semicontinuous with respect to the convergence in $AV_2^o(\Omega)$.
\end{theorem}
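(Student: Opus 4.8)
The plan is to run the Direct Method via a measure--function pair lower semicontinuity argument, combining the coercivity estimate \eqref{eq:estimate} of Proposition \ref{prop:convexity} with Theorem \ref{thm:mfp}. Let $V_h\wto^\ast V$ in $AV_2^o(\Omega)$; we may assume $\liminf_h\FCH(V_h)<\infty$ and, up to a non-relabeled subsequence, that $\FCH(V_h)$ tends to this $\liminf$ and is bounded. Since $q_\sharp V_h\wto^\ast q_\sharp V$ in $AV_2(\Omega)$, the weights $\mu_{V_h}(\Omega)=\mu_{q_\sharp V_h}(\Omega)$ are uniformly bounded, whence \eqref{eq:estimate} yields a uniform bound on $\|A^{q_\sharp V_h}\|_{L^2_{q_\sharp V_h}(\Omega\times G_{2,3})}$.

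Next I split the energy. Expanding the square in $f_\CH$ and using $\sum_{i=1}^3(\ast\xi)_i^2=1$, one checks that for every $V\in AV_2^o(\Omega)$
\[
\FCH(V)=\int_{\Omega\times G_{2,3}}Q\big(A^{q_\sharp V}\big)\dint(q_\sharp V)\;+\;\tfrac{\beta}{2}H_0^2\,\mu_V(\Omega)\;-\;\beta H_0\!\!\int_{\Omega\times G_{2,3}^o}\!\!\big\langle\bar H^{q_\sharp V}(x,q(\xi)),\ast\xi\big\rangle\dint V,
\]
where $Q(A):=\sum_{i=1}^3 f(a^i)$ is the orientation-independent quadratic form from the proof of Proposition \ref{prop:convexity}; under \eqref{eq:bounds} its Hessian is positive definite, so $Q$ is continuous, convex and coercive. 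The first summand is lower semicontinuous along $(V_h)$ by Theorem \ref{thm:mfp}(ii) applied to the measure--function pairs $(q_\sharp V_h,A^{q_\sharp V_h})$ over $\Omega\times G_{2,3}$ — which converge by the very definition of curvature varifold convergence — with convex, coercive integrand $Q$; the second is lower semicontinuous because $\mu_{V_h}=\pi_\sharp V_h\wto^\ast\pi_\sharp V=\mu_V$ and $\beta>0$ by \eqref{eq:bounds}.

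It remains to treat the third term. It is affine in the curvature and linear — hence odd — in $\xi$; not being convex, it must be shown to converge \emph{exactly}, and this is the heart of the proof. One cannot simply invoke Theorem \ref{thm:mfp} over $\Omega\times G_{2,3}^o$, because the curvature is controlled only on the unoriented Grassmannian and a measure--function pair limit of $A^{q_\sharp V_h}$ against $V_h$ need not be symmetric in $\xi$. Instead I integrate by parts: elements of $AV_2^o(\Omega)$ carry Hutchinson's oriented curvature / first-variation identity (the orientation being regular on the $C^2$-rectifiable support, cf.\ Lemma \ref{lem:menne}), and testing it with the affine, $x$-independent map $\xi\mapsto(\ast\xi)_i$ gives
\[
\int_{\Omega\times G_{2,3}^o}\!\!\big\langle\bar H^{q_\sharp V_h}(x,q(\xi)),\ast\xi\big\rangle\dint V_h=\int_{\Omega\times G_{2,3}^o}\!\!(\ast\xi)_i\dint(\d V_h)_i\;-\;\int_{\Omega\times G_{2,3}}\!\!\big\langle c,A^{q_\sharp V_h}\big\rangle\dint(q_\sharp V_h),
\]
with $c\in\RR^{3\times3\times3}$ the fixed tensor encoding the (constant) $\xi$-gradient of $\ast\xi$. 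The first term converges since $\xi\mapsto\ast\xi$ is continuous and the boundary measures converge weakly-$\ast$; the second has integrand even in $\xi$, hence lives on $\Omega\times G_{2,3}$ and converges by $q_\sharp V_h\wto^\ast q_\sharp V$ in $AV_2(\Omega)$. Thus the third term converges to its value at $V$, and combining this with the previous paragraph by superadditivity of the $\liminf$ gives $\FCH(V)\le\liminf_h\FCH(V_h)$.

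The main obstacle is precisely this last term: the orientation-coupled, curvature-linear contribution is not amenable to convexity, and its convergence rests on the integration-by-parts step. Making that step rigorous — identifying the oriented curvature and the boundary of a general $V\in AV_2^o(\Omega)$, and securing weak-$\ast$ convergence of the boundary measures along the sequence — is where the real work lies; the lower semicontinuity of the convex part and of the area term is then a routine consequence of \eqref{eq:estimate} and Theorem \ref{thm:mfp}.
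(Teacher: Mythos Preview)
Your splitting strategy and the handling of the first two terms are fine, but the argument for the third, orientation-coupled term has a genuine gap. The identity you write,
\[
\int_{\Omega\times G_{2,3}^o}\big\langle\bar H^{q_\sharp V_h}(x,q(\xi)),\ast\xi\big\rangle\dint V_h=\int_{\Omega\times G_{2,3}^o}(\ast\xi)_i\dint(\d V_h)_i-\int_{\Omega\times G_{2,3}}\langle c,A^{q_\sharp V_h}\rangle\dint(q_\sharp V_h),
\]
presupposes an \emph{oriented} integration-by-parts formula and an \emph{oriented} boundary measure $\d V_h\in\Radon^3(\Omega\times G_{2,3}^o)$, together with its weak-$\ast$ convergence along the sequence. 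None of this is part of the framework: in $AV_2^o(\Omega)$ only the unoriented counterpart $q_\sharp V$ carries Mantegazza's curvature and boundary measure, and the test function $\xi\mapsto(\ast\xi)_i$ is odd in $\xi$, so it does not factor through $q$ and cannot be inserted into Mantegazza's identity. You recognise that ``making that step rigorous \ldots\ is where the real work lies'', but as written the proof simply assumes the conclusion of that work. Without it the third term is left uncontrolled and the argument does not close.

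The paper avoids this difficulty entirely by \emph{not} splitting the energy. Since by Proposition~\ref{prop:convexity} the full density $f_\CH(\xi,\cdot)$ is strictly convex and coercive in $A$ for every fixed $\xi\in G_{2,3}^o$, Theorem~\ref{thm:mfp}(ii) is applied directly on $\Omega\times G_{2,3}^o$ to the measure--function pairs $(V_h,B_h)$ with $B_h(x,\xi):=A^{q_\sharp V_h}(x,q(\xi))$. The point you dismiss---``one cannot simply invoke Theorem~\ref{thm:mfp} over $\Omega\times G_{2,3}^o$''---is exactly what the paper does: the $\xi$-symmetry $B_h(x,\xi)=B_h(x,-\xi)$ (which holds by construction, since $B_h$ factors through $q$) is used to lift the measure--function pair convergence $(q_\sharp V_h,A^{q_\sharp V_h})\wto^\ast(q_\sharp V,A^{q_\sharp V})$ to $(V_h,B_h)\wto^\ast(V,B)$ over the oriented Grassmannian, and lower semicontinuity then follows in one stroke. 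No oriented boundary measure, no integration by parts, no separate treatment of the linear term is needed.
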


\begin{proof} 
Let $V_h\wto^\ast V$ in $AV_2^o(\Omega)$ and assume with no loss of generality that 
$$
\sup_h\FCH(V_h)<\infty. 
$$
We have that $V_h\wto^\ast V$ converges as oriented varifolds in $\Radon(\Omega\times G_{2,3}^o)$ and  $(q_\sharp V_h,A^{q_\sharp V_h})\wto^\ast (q_\sharp V,A^{q_\sharp V})$ converges as measure-function pairs over $\Omega\times G_{2,3}$. We observe that 
$$
\FCH(V)=\int_{\Omega\times G_{2,3}^o} f_{\CH}(\xi,A^{q_\sharp V}(x,q(\xi)))\dint V(x,\xi)
= \int_{\Omega \times G^o_{2,3}}f_{\CH} (\xi, B(x,\xi)) \dint V(x,\xi)
$$
where the function $B\colon\Omega\times G_{2,3}^o\to\RR^{3\times 3 \times 3}$ is given by
$$
B(x,\xi):=A^{q_\sharp V}(x,q(\xi)).
$$
We now pass to the limit with respect to the measure-function pair $(V,B)$ over $\Omega\times G_{2,3}^o$ with values in $\RR^{3\times 3 \times 3}$. The symmetries 
$$
B(x,\xi)=A^{q_\sharp V}(x,q(\xi))=A^{q_\sharp V}(x,q(-\xi))=B(x,-\xi)
$$
and
$$
B_h(x,\xi):=A^{q_\sharp V_h}(x,q(\xi))=A^{q_\sharp V_h}(x,q(-\xi))=B_h(x,-\xi)
$$
allow us to infer that
$$
(V_h,B_h)\wto^\ast (V,B)
$$ 
in the sense of measure-function  pair  convergence over $\Omega\times G_{2,3}^o$.  The integrand  $f_\CH$ is continuous on $G_{2,3}^o\times\RR^{3\times 3 \times 3}$ and a strictly convex quadratic function in the second variable  by Proposition \ref{prop:convexity}.  In particular, there exist $c_1, c_2>0$ such that for all $\xi\in G_{2,3}^o$
$$
f_\CH(\xi,B)\geq\frac{1}{c_1}|B|^2-c_2.
$$
Therefore, Theorem \ref{thm:mfp} (ii) applies and we obtain
\begin{align*}
\FCH(V)&=\int_{\Omega\times G_{2,3}^o} f_{\CH}(\xi,B(x,\xi))\dint V(x,\xi)\\
& \leq \liminf_{h\to\infty}\int_{\Omega\times G_{2,3}^o} f_{\CH}(\xi,B_h(x,\xi))\dint V_h(x,\xi) \\
&=\liminf_{h\to\infty}\FCH(V_{h}),
\end{align*}
which concludes the proof.
\end{proof}

\section{Existence of minimizers}\label{sec:existence}

In order to establish the existence of minimizers of $\FCH$ among varifolds with fixed total mass, we assume that the spatial support of the admissible varifolds is contained in a fixed compact set. Additionally, to guarantee that weak limits are still orientable, we require that the mass of the boundary current of the admissible varifolds is bounded by a fixed constant.

\begin{lemma}[Closedness of admissible varifolds]\label{lem:Aclosed}
Let $K\subset\subset\Omega$ have nonempty interior and let $m^o>0$. Then the class  
\begin{equation}\label{eq:admissible}
\cA:=\left\{V\in AV_2^o(\Omega):\:\supp\,\mu_V\sbs K,\:\:\mathbb{M}_{\d\curr{V}}(\Omega)\leq m^o\right\}
\end{equation}
is closed in $AV_2^o(\Omega)$.
\end{lemma}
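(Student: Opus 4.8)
The plan is to prove sequential closedness: take a sequence $(V_h)\sbs\cA$ with $V_h\wto^\ast V$ in $AV_2^o(\Omega)$ — so that in particular $V\in AV_2^o(\Omega)$, the existence of such a limit being supplied in the applications by the compactness Theorems~\ref{thm:Mantegazza_Thm6} and \ref{thm:Hutchinson_Thm3} — and to check that the two constraints defining $\cA$ are stable under this convergence.

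\emph{The support constraint.} Since $G_{2,3}^o$ is compact, $\varphi\circ\pi\in C_c(\Omega\times G_{2,3}^o)$ for every $\varphi\in C_c(\Omega)$, hence $V_h\wto^\ast V$ in $\Radon(\Omega\times G_{2,3}^o)$ yields $\mu_{V_h}=\pi_\sharp V_h\wto^\ast\pi_\sharp V=\mu_V$ in $\Radon(\Omega)$. As every $\mu_{V_h}$ vanishes on the open set $\Omega\setminus K$, testing against an arbitrary nonnegative $\varphi\in C_c(\Omega\setminus K)$ gives $\mu_V(\varphi)=\lim_h\mu_{V_h}(\varphi)=0$; thus $\mu_V(\Omega\setminus K)=0$ by inner regularity, i.e.\ $\supp\mu_V\sbs K$. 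In particular $\curr{V}$ is carried by $K$, since the multiplicity $\theta_+-\theta_-$ of $\curr{V}$ satisfies $|\theta_+-\theta_-|\le\theta_++\theta_-$ (the pointwise inequality underlying \eqref{eq:massbound}) and hence vanishes $\Haus^2$-a.e.\ outside $\supp\mu_V$, and the same holds for each $\curr{V_h}$.

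\emph{The boundary-mass constraint.} The key point is that convergence of oriented varifolds already encodes convergence of the associated integer-multiplicity currents. For a smooth $2$-form $\eta$ with $\supp\eta\subset\subset\Omega$ I would introduce the test function $\psi_\eta\in C_c(\Omega\times G_{2,3}^o)$, $\psi_\eta(x,\xi):=\langle\eta(x),\xi\rangle$, which is odd in $\xi$; writing $V_h=v^o(M_h,\xi_h,\theta_{+,h},\theta_{-,h})$, the definitions of $v^o$ and of $\curr{\cdot}$ give, using $\psi_\eta(x,-\xi)=-\psi_\eta(x,\xi)$, the identity $V_h(\psi_\eta)=\int_{M_h}\langle\eta,\xi_h\rangle(\theta_{+,h}-\theta_{-,h})\dint\Haus^2=\curr{V_h}(\eta)$, and likewise $V(\psi_\eta)=\curr{V}(\eta)$. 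Therefore $\curr{V_h}(\eta)\to\curr{V}(\eta)$ for all such $\eta$, that is $\curr{V_h}\wto^\ast\curr{V}$ in $\mathcal D_2(\Omega)$; taking $\eta=\dint\omega$ with $\omega\in\mathcal D^1(\RR^3)$, $\supp\omega\subset\subset\Omega$, gives $\d\curr{V_h}(\omega)=\curr{V_h}(\dint\omega)\to\curr{V}(\dint\omega)=\d\curr{V}(\omega)$, so $\d\curr{V_h}\wto^\ast\d\curr{V}$. Passing to the limit in $\d\curr{V_h}(\omega)$ for each admissible $\omega$ with $\|\omega\|_{L^\infty(\RR^3)}\le1$ and taking the supremum, weak-$\ast$ lower semicontinuity of the mass gives $\mathbb{M}_{\d\curr{V}}(\Omega)\le\liminf_h\mathbb{M}_{\d\curr{V_h}}(\Omega)\le m^o$. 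Combining the two parts, $V\in\cA$, which is the claim.

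I expect the only genuinely non-formal step to be the identity $\curr{V_h}(\eta)=V_h(\psi_\eta)$ linking the rectifiable current with the oriented varifold tested against the odd function $\psi_\eta$; once that bookkeeping is done, everything reduces to standard weak-$\ast$ semicontinuity for Radon measures and for currents. As a variant, one may drop the requirement $\supp\eta\subset\subset\Omega$ by inserting a cutoff $\chi\in C_c^\infty(\Omega)$ with $\chi\equiv1$ on a neighbourhood of $K$ — legitimate precisely because $\curr{V_h}$ is carried by $K$ (cf.\ \eqref{eq:massbound}) — which upgrades the current convergence to $\curr{V_h}\wto^\ast\curr{V}$ in $\mathcal D_2(\RR^3)$.
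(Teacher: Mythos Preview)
Your argument is correct and follows essentially the same route as the paper: verify $\supp\mu_V\sbs K$ via $\mu_{V_h}\wto^\ast\mu_V$, then deduce $\curr{V_h}\wto^\ast\curr{V}$ in $\mathcal D_2(\Omega)$, hence $\d\curr{V_h}\wto^\ast\d\curr{V}$, and conclude by lower semicontinuity of the mass. The one place where you go beyond the paper is in spelling out, via the odd test function $\psi_\eta(x,\xi)=\langle\eta(x),\xi\rangle$, \emph{why} oriented-varifold convergence forces current convergence; the paper simply records this implication without proof and then cites Theorem~\ref{thm:FF} for the lower semicontinuity of $\MM_{\d\curr{\cdot}}$.
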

\begin{proof}
Consider $V_h\in\cA$  with $V_{h}\wto^\ast V$ in $AV_2^o(\Omega)$ \eqref{eq:AVo} as $h\to\infty$. With the convergence of the mass measures, $\mu_{V_h}\wto^\ast\mu_V$ in $\Radon(\Omega)$, standard results from measure theory then give
$$ 
V_{h}\wto^\ast V\:\:\text{in}\:\:AV_2^o(\Omega),\quad \supp\,\mu_{V_h}\sbs K \quad\Longrightarrow\quad \supp\,\mu_V\sbs K.
$$
Moreover, the $2$-currents $\curr{V_h}$ and $\curr{V}$ corresponding to $V_h$ and $V$ respectively are $2$-rectifiable with integer multiplicity. We have
$$
V_{h}\wto^\ast V\:\:\text{in}\:\:AV_2^o(\Omega)
\quad\Longrightarrow\quad
\curr{V_{h}}\wto^\ast \curr{V}\:\:\text{in}\:\:\mathcal D_2(\Omega)
\quad\Longrightarrow\quad
\d\curr{V_{h}}\wto^\ast \d\curr{V}\:\:\text{in}\:\:\mathcal D_1(\Omega).
$$
The masses $\mathbb{M}_{\curr{V_h}}(\Omega)$ are bounded by virtue of estimate \eqref{eq:massbound}. With $\mathbb{M}_{\d\curr{V_h}}(\Omega)\leq m^o$, the lower semicontinuity of the mass of currents (Theorem \ref{thm:FF}) then yields  
$$
\mathbb{M}_{\d\curr{V}}(\Omega)\leq \liminf_{h\to\infty}\mathbb{M}_{\d\curr{V_h}}(\Omega)\leq m^o,
$$
completing the proof.
\end{proof}

\subsection{Single-phase membranes}

In this paragraph we present an existence result of varifold-minimizers for the single-phase Canham-Helfrich energy \eqref{eq:FCH}, including the constraints of fixed surface area and enclosed volume, 
\begin{equation}\label{eq:me}
m>0\quad\text{and}\quad e>0.
\end{equation}
In order not to rule out embedded smooth solutions, we further assume that $m$ and $e$ satisfy the isoperimetric inequality in $\RR^3$,
$$
(6\,\sqrt{\pi}\,e)^{1/3}\leq m^{1/2}.
$$
Equality among smooth closed  embedded  surfaces is achieved  by  a sphere.

\begin{theorem}[Existence]\label{thm:single}
Assume \eqref{eq:bounds} and \eqref{eq:me}. Then there exists a solution of
$$
\min\{\FCH(V):\:V\in\cA,\, \mu_V(\Omega)=m,\, \d(q_\sharp V)=0,\,\encvol(V)=e\}.
$$
\end{theorem}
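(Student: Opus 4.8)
The plan is to apply the Direct Method. First I would take a minimizing sequence $(V_h) \subset \cA$ with $\mu_{V_h}(\Omega) = m$, $\d(q_\sharp V_h) = 0$, and $\encvol(V_h) = e$, so that $\FCH(V_h) \to \inf$. The first task is to verify that the admissible set is nonempty, which is where the isoperimetric hypothesis $(6\sqrt{\pi}\, e)^{1/3} \le m^{1/2}$ enters: one can realize a smooth closed embedded surface (e.g.\ a suitably rescaled sphere, or a sphere together with small spheres to absorb the area/volume discrepancy) inside $K$ with the prescribed area $m$ and enclosed volume $e$, and $m^o$ large enough to accommodate $\d\curr{V}=0$; its associated varifold $V_M$ from \eqref{eq:VM} lies in $\cA$ with finite energy. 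Hence $\inf \FCH < \infty$ over the constraint set.

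Next I would extract compactness. From $\sup_h \FCH(V_h) < \infty$ and $\mu_{V_h}(\Omega) = m$, estimate \eqref{eq:estimate} of Proposition~\ref{prop:convexity} gives a uniform bound on $\|A^{q_\sharp V_h}\|_{L^2_{q_\sharp V_h}}$. Then the first variation bound \eqref{eq:firstvarbound} controls $\|\delta(q_\sharp V_h)\|(\Omega)$ uniformly (note $\|\d(q_\sharp V_h)\| = 0$), and $\MM_{\d\curr{V_h}}(\Omega) \le m^o$ by membership in $\cA$. Thus the hypotheses of Hutchinson's compactness Theorem~\ref{thm:Hutchinson_Thm3} are met (with $\mu_{V_h}(\Omega) = m$ fixed), yielding a subsequence $V_h \wto^\ast V$ in $\Radon(\Omega \times G_{2,3}^o)$ with $V \in IV_2^o(\Omega)$. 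Separately, the $L^2$-bound on $A^{q_\sharp V_h}$ together with $\mu_{q_\sharp V_h}(\Omega) = m$ and $\|\d(q_\sharp V_h)\| = 0$ lets us invoke Mantegazza's compactness Theorem~\ref{thm:Mantegazza_Thm6} (with $p=2$) to get $q_\sharp V_h \wto^\ast$ some limit in $AV_2(\Omega)$; by Lemma~\ref{lem:orconv}, $q_\sharp V_h \wto^\ast q_\sharp V$, so this limit is $q_\sharp V \in AV_2(\Omega)$, i.e.\ $V \in AV_2^o(\Omega)$ and in fact $V_h \wto^\ast V$ in $AV_2^o(\Omega)$.

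Then I would check that $V$ is admissible and satisfies the constraints. Closedness of $\cA$ is exactly Lemma~\ref{lem:Aclosed}, so $V \in \cA$. For the mass constraint, $\mu_{V_h} \wto^\ast \mu_V$ in $\Radon(\Omega)$ with all $\mu_{V_h}$ supported in the fixed compact $K$; testing against a cutoff $\varphi \in C_c(\Omega)$ with $\varphi \equiv 1$ on $K$ gives $\mu_V(\Omega) = \lim_h \mu_{V_h}(\Omega) = m$. For the boundary condition, $\d(q_\sharp V_h) \wto^\ast \d(q_\sharp V)$ as a consequence of curvature-varifold convergence (this is part of Theorem~\ref{thm:Mantegazza_Thm6}), and since each $\d(q_\sharp V_h) = 0$ we get $\d(q_\sharp V) = 0$. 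For the volume constraint, the supports are uniformly bounded (contained in $K$), so implication \eqref{eq:enclosedvolumeconv} gives $\encvol(V) = \lim_h \encvol(V_h) = e$. Finally, lower semicontinuity from Theorem~\ref{thm:lsc} gives $\FCH(V) \le \liminf_h \FCH(V_h) = \inf$, so $V$ is a minimizer.

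The main obstacle is not any single step — each is essentially a citation of earlier results — but rather the bookkeeping of assembling the two compactness theorems consistently: one must be careful that the limit $V$ produced by Hutchinson's theorem for oriented varifolds and the limit produced by Mantegazza's theorem for the unoriented projections $q_\sharp V_h$ genuinely agree, and that $A^{q_\sharp V}$ is the curvature function of the \emph{same} limit varifold. Lemma~\ref{lem:orconv} is the bridge that forces the two to coincide, but one should also confirm that the uniform bounds feeding Theorem~\ref{thm:Hutchinson_Thm3} (namely $\|\delta(q_\sharp V_h)\|(\Omega)$) really do follow from the energy bound via \eqref{eq:estimate} and \eqref{eq:firstvarbound}, and that nonemptiness of the constraint set is not vacuous — the isoperimetric assumption and the freedom in choosing $m^o$ and $K$ with nonempty interior are precisely what guarantees a competitor exists.
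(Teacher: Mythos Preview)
Your proposal is correct and follows essentially the same route as the paper: take a minimizing sequence, use Proposition~\ref{prop:convexity} and \eqref{eq:firstvarbound} to feed Hutchinson's and Mantegazza's compactness theorems, match the two limits via Lemma~\ref{lem:orconv}, and close with Lemma~\ref{lem:Aclosed}, \eqref{eq:enclosedvolumeconv}, and Theorem~\ref{thm:lsc}. The paper's only difference is cosmetic---it cites ellipsoids (rather than unions of spheres) as competitors realizing arbitrary admissible $(m,e)$, and is slightly terser about the mass-constraint passage, but the structure and the ``bookkeeping obstacle'' you flag are exactly what the paper addresses.
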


\begin{proof} As varifolds $V\in \cA$ with $\mu_V(\Omega) = m$, $\d(q_\sharp V)=0$, and $\encvol(V)=e$ exist (think of ellipsoids), one can find a minimizing sequence $(V_h)$  for $\FCH$, i.e.,  
$$
\lim_{h\to\infty}\FCH(V_h)=\inf\{\FCH(V):\,V\in\cA,\, \mu_V(\Omega)=m,\, \d(q_\sharp V)=0,\,\encvol(V)=e\},
$$
where $V_h\in\cA$ and satisfies $\mu_{V_h}(\Omega)=m$, $\d(q_\sharp V_h)=0$, $\encvol(V_h)=e$. By Proposition \ref{prop:convexity},
$$
\|A^{q_\sharp V_h}\|^2_{L^2_{q_\sharp V_h}(\Omega\times G_{2,3})}\leq c'\left( \FCH(V_h)+\mu_{V_h}(\Omega)\right),
$$
for some $c'>0$, which with \eqref{eq:firstvarbound} yields
\begin{align}
\|\delta(q_\sharp V_h)\|(\Omega)
&\leq \sqrt{2\,\mu_{V_h}(\Omega)}\, \|A^{q_\sharp V_h}\|_{L^2_{q_\sharp V_h}(\Omega\times G_{2,3})}
+\underbrace{\|\d (q_\sharp V_h)\|(\Omega\times G_{2,3})}_{=\:0}\nn\\
&\leq c\sqrt{\mu_{V_h}(\Omega)}\,\sqrt{\FCH(V_h)+\mu_{V_h}(\Omega)}
\label{eq:firstvarestimate}
\end{align}
for some $c>0$. From $\mu_{V_h}(\Omega)=m$ and the minimality of $V_h$ we thus obtain 
$$
\|\delta(q_\sharp V_h)\|(\Omega)\leq c\sqrt{m}\,\sqrt{\FCH(V_h)+m}<\infty
\quad \text{uniformly in $h$}.
$$ 
By the compactness Theorem \ref{thm:Hutchinson_Thm3},
$$
\mu_{V_{h}}(\Omega)+\|\delta(q_\sharp V_{h})\|(\Omega)+\mathbb{M}_{\d\curr{V_{h}}}(\Omega) <\infty
\:\Longrightarrow\: \exists\:\: V_{h'}\wto^\ast V\:\:\text{in}\:\:\Radon(\Omega\times G_{2,3}^o),\:\: V \in IV_2^o(\Omega).
$$
Consequently, by Lemma \ref{lem:orconv}, the subsequence $(V_{h'})$ also satisfies
$$
q_\sharp V_{h'}\wto^\ast q_\sharp V\:\:\text{in}\:\:\Radon(\Omega\times G_{2,3}),\quad q_\sharp V\in IV_2(\Omega).
$$
By the compactness Theorem \ref{thm:Mantegazza_Thm6},
$$
\mu_{q_\sharp V_{h'}}(\Omega)+\|A^{q_\sharp V_{h'}}\|^2_{L^2_{q_\sharp V_{h'}}(\Omega\times G_{2,3})}
+\underbrace{\|\d (q_\sharp V_{h'})\|(\Omega\times G_{2,3})}_{=0}<\infty
\:\Longrightarrow\: q_\sharp V_{h'}\wto^\ast \widetilde{V}\:\:\text{in}\:\: AV_2(\Omega).
$$
Since convergence in $AV_2(\Omega)$ and $IV_2^o(\Omega)$ imply convergence as Radon measures  $\Radon(\Omega\times G_{2,3})$,  uniqueness of limits yields that $
\widetilde{V}=q_\sharp V$. Consequently, the minimizing sequence $V_h$ has a subsequence which converges in $\cA$, i.e.\
$$
V_{h'}\wto^\ast V\quad\text{in}\quad AV_2^o(\Omega).
$$
The limit $V$ also satisfies the constraints on admissibility, mass, zero boundary, and enclosed volume. Indeed, $V\in\cA$ follows from closedness of $\cA$ established in Lemma \ref{lem:Aclosed}. The compact support of admissible varifolds allows us to infer $\mu_{V}(\Omega)=m$ via results from measure theory. The  condition on the boundary measure  $\d(q_\sharp V)=0$ follows from Mantegazza's Theorem \ref{thm:Mantegazza_Thm6}, according to which $\d(q_\sharp V_{h'})\wto^\ast \d(q_\sharp V)$ in $\Radon^3(\Omega\times G_{2,3})$. Finally, $\encvol(V)=e$ holds by virtue of \eqref{eq:enclosedvolumeconv}. Moreover, Theorem \ref{thm:lsc} shows that $\FCH$ is lower semicontinuous on $AV_2^o(\Omega)$. Therefore, by the direct method, the limit $V\in\cA$ is a minimizer.
\end{proof} 

\subsection{Sharp-interface multiphase membranes}

We introduce the two-phase energy 
$$
{\FCH}_{,2}(V^{\one},V^{\two}):=\FCH^{\one}(V^{\one})+\FCH^{\two}(V^{\two})+\sigma^\one\|\d(q_\sharp V^\one)\|(\Omega\times G_{2,3})+\sigma^\two\|\d(q_\sharp V^\two)\|(\Omega\times G_{2,3})
$$
for $V^\one,V^\two\in {\mathcal A}$ and some $\sigma^\one,\sigma^\two>0$. Here, $\FCH^{\one}$ and $\FCH^{\two}$ are defined as in \eqref{eq:FCHlong} with parameters $ \beta^\one, \, \gamma^\one$ and $\beta^\two,\,\gamma^\two$, respectively. The two varifolds $V^\one$ and $V^\two$ correspond to the individual phases.

In order to model sharp interfaces, the spatial supports of $V^\one$ and $V^\two$ are required  not to overlap on $\Haus^2$-nonnegligible sets. According to definition \eqref{eq:admissible}, admissible varifolds $V_i\in\cA$ ($i=1,2$) are integral, whence their weight measures are of the form 
$$
\mu_i:=\mu_{V^{i}}=\theta_i\,\Haus^2\llc M_i\in\Radon(\Omega)
$$ 
with $M_i\sbs\Omega$ countably $2$-rectifiable, $\Haus^2$-measurable and with positive, integer-valued multiplicity $\theta_i\in L^1_{\loc;\Haus^2\llc M_i}(M_i;\NN)$. Overlap is quantified in terms of the scaling of the product-mass $\mu_{{\one}}\otimes\mu_{{\two}}\in\Radon(\Omega\times\Omega)$ evaluated on the open $\veps$-neighborhood 
$$
D_{\Omega,\veps}:=\{(x,y)\in \Omega\times\Omega:\,|x-y|<\veps\}
$$
of the diagonal in $\Omega\times\Omega$: We say that the varifolds $V_\one$, $V_\two\in\cA$  satisfy the {\em no-overlap condition} with $\veps_0>0$, if
\begin{equation}\label{eq:nooverlap}
(\mu_{\one}\otimes\mu_{\two})(D_{\Omega,\veps})\leq \veps^3/\veps_0\quad\text{for all}\quad \veps\in(0,\veps_0).
\end{equation}
The next lemma shows that this condition rules out $\Haus^2$-overlap of $M_\one$ and $M_\two$, still allowing $\Haus^1$- or $\Haus^0$-overlap. 

\begin{lemma}[{{No-Overlap}}]\label{lem:nooverlap}
Let $\mu_i=\theta_i\,\Haus^2\llc M_i\in\Radon(\Omega)$ be as above and have fixed mass $m^i=\mu_i(\Omega)>0$ for $i=1,2$. Then, by letting $S_\veps:=\{x\in M_\one:\:\dist(x,M_\two)<\veps\}$ we have that 
\begin{equation}\label{eq:nooverlapformula}
(\mu_{\one}\otimes\mu_{\two})(D_{\Omega,\veps})=\int_{S_\veps}\mu_{\two}(B_\veps(x))\dint \mu_{\one}(x)
\end{equation}
and the following statements hold:

\begin{enumerate}[{(i)}]
\item If there exists $c>0$ such that for all $\veps\in (0,\veps_0)$, $\mu_\two(B_\veps(x))\geq c\,\veps^2$ for all $x\in M_\two$ and $\Haus^2(M_\one\cap M_\two)>0$, then condition \eqref{eq:nooverlap} is not satisfied. 
\item If there exists $C>0$ such that for all $\veps\in (0,\veps_0)$, $\mu_\two(B_\veps(x))\leq C\,\veps^2$ for all $x\in S_\veps$, $\mu_\one(S_\veps)\leq\veps/(C\veps_0)$, and $0<\Haus^m(M_1\cap M_2)<\infty$ with $ m\in[0,1]$, then \eqref{eq:nooverlap} is fulfilled.
\end{enumerate} 
Analogous statements hold if $\mu_\one$ and $\mu_\two$ are interchanged.
\end{lemma}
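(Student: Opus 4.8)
The plan is to derive all three assertions from Tonelli's theorem combined with the elementary fact that each $\mu_i=\theta_i\,\Haus^2\llc M_i$ is concentrated on $M_i$ with $\theta_i\ge1$. First I would prove the identity \eqref{eq:nooverlapformula}. Since $\mu_\one,\mu_\two$ are finite Radon measures and $(x,y)\mapsto\chi_{\{|x-y|<\veps\}}$ is a nonnegative Borel function, Tonelli gives
\[
(\mu_\one\otimes\mu_\two)(D_{\Omega,\veps})
=\int_\Omega\Big(\int_\Omega\chi_{\{|x-y|<\veps\}}\dint\mu_\two(y)\Big)\dint\mu_\one(x)
=\int_\Omega\mu_\two(B_\veps(x))\dint\mu_\one(x).
\]
Because $\mu_\one$ lives on $M_\one$ the outer integral runs over $M_\one$, and because $\mu_\two$ lives on $M_\two$, the value $\mu_\two(B_\veps(x))$ is positive only if $B_\veps(x)\cap M_\two\neq\emptyset$, i.e.\ $\dist(x,M_\two)<\veps$. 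Hence the integrand vanishes outside $S_\veps$ and the outer integral restricts to $S_\veps$, which is \eqref{eq:nooverlapformula}.

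For part (i) I would bound from below. For every $\veps>0$ one has $M_\one\cap M_\two\sbs S_\veps$, and by hypothesis $\mu_\two(B_\veps(x))\ge c\,\veps^2$ for all $x\in M_\two\supseteq M_\one\cap M_\two$, so \eqref{eq:nooverlapformula} and $\theta_\one\ge1$ yield
\[
(\mu_\one\otimes\mu_\two)(D_{\Omega,\veps})
\ge\int_{M_\one\cap M_\two}\mu_\two(B_\veps(x))\dint\mu_\one(x)
\ge c\,\veps^2\,\mu_\one(M_\one\cap M_\two)
\ge c\,\veps^2\,\Haus^2(M_\one\cap M_\two).
\]
If \eqref{eq:nooverlap} held, then dividing by $\veps^2$ would force $c\,\Haus^2(M_\one\cap M_\two)\le\veps/\veps_0$ for all $\veps\in(0,\veps_0)$, which is impossible as $\veps\to0^+$ since $\Haus^2(M_\one\cap M_\two)>0$. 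Hence \eqref{eq:nooverlap} fails.

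For part (ii) I would bound from above, directly from \eqref{eq:nooverlapformula}: using $\mu_\two(B_\veps(x))\le C\,\veps^2$ for $x\in S_\veps$ and $\mu_\one(S_\veps)\le\veps/(C\veps_0)$,
\[
(\mu_\one\otimes\mu_\two)(D_{\Omega,\veps})
=\int_{S_\veps}\mu_\two(B_\veps(x))\dint\mu_\one(x)
\le C\,\veps^2\,\mu_\one(S_\veps)
\le C\,\veps^2\cdot\frac{\veps}{C\veps_0}
=\frac{\veps^3}{\veps_0},
\]
so \eqref{eq:nooverlap} holds. The quantitative decay hypotheses encode exactly the situation of an at most one‑dimensional intersection: the condition $0<\Haus^m(M_\one\cap M_\two)<\infty$ with $m\le1$ in particular forces $\Haus^2(M_\one\cap M_\two)=0$, consistently with (i). The interchanged statements follow by the symmetric argument, writing $(\mu_\one\otimes\mu_\two)(D_{\Omega,\veps})=\int_{\widetilde S_\veps}\mu_\one(B_\veps(y))\dint\mu_\two(y)$ with $\widetilde S_\veps:=\{y\in M_\two:\dist(y,M_\one)<\veps\}$.

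There is no deep obstacle here; the only point requiring care is the measure‑concentration bookkeeping — that $\mu_i$ charges only $M_i$ (so the integrands are supported where claimed and $\mu_\two(B_\veps(x))$ detects proximity to $M_\two$) and that $\theta_i\ge1$ is what lets one pass from $\mu_\one$ to $\Haus^2\llc M_\one$ in the one direction needed in (i).
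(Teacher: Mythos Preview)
Your proof is correct and follows essentially the same route as the paper: derive \eqref{eq:nooverlapformula} by iterated integration, restrict the outer integral to $S_\veps$ since $\mu_\two(B_\veps(x))=0$ when $\dist(x,M_\two)\ge\veps$, and then argue (i) via the lower bound $c\,\veps^2\,\Haus^2(M_\one\cap M_\two)$ and (ii) via the direct upper bound $C\,\veps^2\,\mu_\one(S_\veps)$. The only cosmetic differences are that the paper invokes the disintegration theorem for probability measures where you use Tonelli (your choice is the more elementary one here), and in (i) the paper picks a specific small $\veps$ to force the contradiction whereas you let $\veps\to0^+$.
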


See Figures \ref{fig:H2}, \ref{fig:H1}, and \ref{fig:H0} for some illustration of the statement above and its proof.

\begin{figure}
		\vspace{0.5em}
	\includegraphics[width=0.55\textwidth]{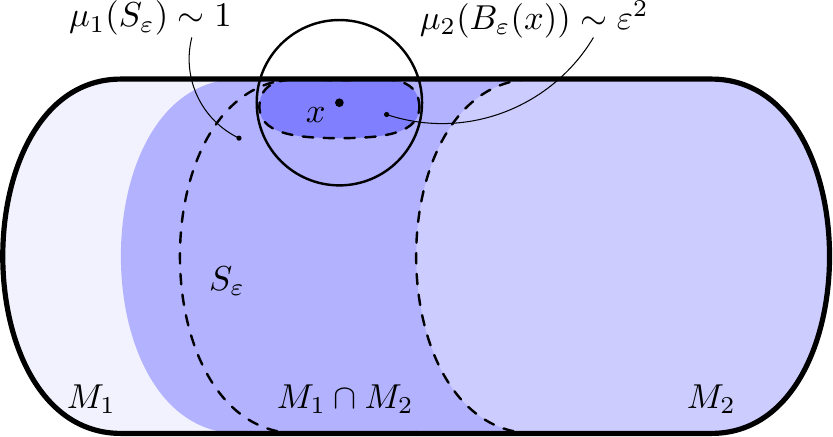}
	\caption{$\Haus^2$-overlap, Lemma \ref{lem:nooverlap}{\em (i)}.}
	\label{fig:H2}
\end{figure}

\begin{figure}
	\vspace{2em}
	\includegraphics[width=0.55\textwidth]{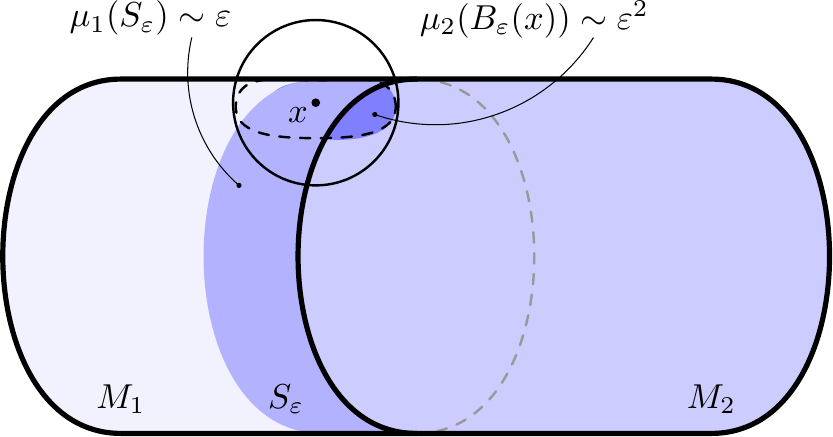}
	\caption{$\Haus^1$-overlap, Lemma \ref{lem:nooverlap}{\em (ii)} with $m=1$.}
	\label{fig:H1}
\end{figure}

\begin{figure}
	\vspace{2em}
	\includegraphics[width=0.55\textwidth]{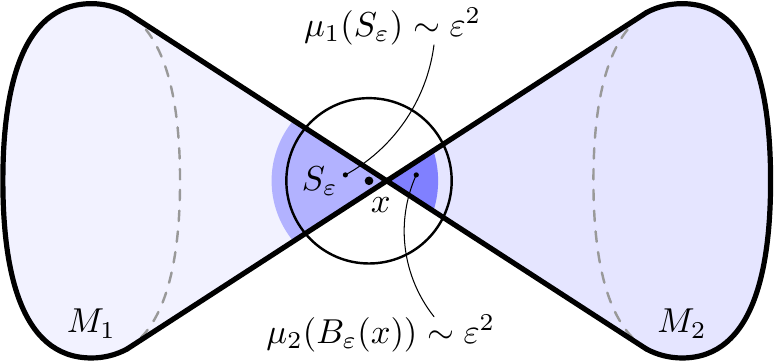}
	\caption{$\Haus^0$-overlap, Lemma \ref{lem:nooverlap}{\em (ii)} with $m=0$.}
	\label{fig:H0}
\end{figure}

\begin{proof} Let $\veps>0$.  The fixed total masses imply that $\mu_i/m^i\in\Radon(\Omega)$ 
and $(\mu_{\one}\otimes\mu_{\two})/(m^{\one}m^{\two})\in\Radon(\Omega\times\Omega)$ 
are probability measures. By disintegration for probability measures (see \cite[Thm.\ 2.28, p.\ 57]{AFP:00} or \cite[Thm.\ 5.3.1, p.\ 121]{AGS:08}) and $|x-y|<\veps$ being equivalent to $y\in B_\veps(x)$,
we obtain
$$
(\mu_{\one}\otimes\mu_{\two})(D_{\Omega,\veps})
=\int_{D_{\Omega,\veps}}\dint (\mu_{\one}\otimes\mu_{\two})
=\int_{\Omega}\mu_{\two}(B_\veps(x))\dint \mu_{\one}(x)
=\int_{M_\one}\mu_{\two}(B_\veps(x))\dint \mu_{\one}(x).
$$
It holds that 
$$
\mu_{\two}(B_\veps(x))=0\quad\text{for all}\quad x\in \Omega,\:\dist(x,M_2)\geq\veps.
$$
With $S_\veps=\{x\in M_\one:\:\dist(x,M_\two)<\veps\}$, this implies that $\mu_{\two}(B_\veps(x))=0$ for all $x\in M_\one\setminus S_\veps$, which shows \eqref{eq:nooverlapformula}. 

To prove statement {\em (i)}, assume that $\Haus^2(M_\one\cap M_\two)>0$. With 
$S_\veps\supseteq M_\one\cap M_\two$ and the lower bound $\mu_\two(B_\veps(x))\geq c\,\veps^2$ for  $x\in M_\two$ we arrive at the estimate
\begin{align*}
(\mu_{\one}\otimes\mu_{\two})(D_{\Omega,\veps})
=\int_{S_\veps}\mu_{\two}(B_\veps(x))\dint \mu_{\one}(x)
&\geq \int_{M_\one\cap M_\two}\mu_{\two}(B_\veps(x))\dint \mu_{\one}(x)\\
&\geq c\,\veps^2\int_{M_\one\cap M_\two}\underbrace{\theta_\one(x)}_{\geq\, 1}\dint\Haus^2(x)
\geq c\,\veps^2 \Haus^2(M_\one\cap M_\two).
\end{align*}
This gives a contradiction to \eqref{eq:nooverlap} for any $\veps_0>0$, as the choice 
$$
0<\veps<\min\{1,\,\veps_0,\,\veps_0\,c\,\Haus^2(M_1\cap M_2)\}
$$ 
yields $c\,\Haus^2(M_\one\cap M_\two)>\veps/\veps_0$ and thus 
$(\mu_{\one}\otimes\mu_{\two})(D_{\Omega,\veps})\geq c\,\veps^2 \Haus^2(M_\one\cap M_\two)>\veps^3/\veps_0$.

Under the assumptions in  {\em (ii)},  namely $\mu_\two(B_\veps(x))\leq C\,\veps^2$ for $x\in S_\veps$ and $\mu_\one(S_\veps)\leq\veps/(C\veps_0)$,  we obtain
$$
(\mu_{\one}\otimes\mu_{\two})(D_{\Omega,\veps})
=\int_{S_\veps}\mu_{\two}(B_\veps(x))\dint \mu_{\one}(x)
\leq C\,\veps^2\,\mu_\one(S_\veps)\leq C\,\veps^2\,\veps/(C\veps_0)= \veps^3/\veps_0
$$
so that  \eqref{eq:nooverlap} holds.
\end{proof}

The no-overlap condition \eqref{eq:nooverlap} is closed with respect to oriented curvature varifold convergence, which is crucial for including it as a constraint in the direct method:

\begin{lemma}[Closedness of the no-overlap condition] Let $V^i_h\wto^\ast V^i$ in $AV_2^o(\Omega)$ for $i=\one,\two$. If all $V^\one_h$ and $V^\two_h$ satisfy \eqref{eq:nooverlap} with $\veps_0>0$, then this is also true for the limits $V^\one$ and $V^\two$.
\end{lemma}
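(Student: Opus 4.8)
The plan is to pass to the limit in the defining inequality \eqref{eq:nooverlap} by exploiting the weak-$\ast$ convergence of the mass measures. From $V^i_h\wto^\ast V^i$ in $AV_2^o(\Omega)$ we get, via Lemma \ref{lem:orconv} and the definition of curvature varifold convergence, that $\mu_{V^i_h}\wto^\ast\mu_{V^i}$ in $\Radon(\Omega)$ for $i=\one,\two$. Writing $\mu^i_h:=\mu_{V^i_h}$ and $\mu^i:=\mu_{V^i}$, it is a standard fact that weak-$\ast$ convergence of finite Radon measures is preserved under taking products, so $\mu^\one_h\otimes\mu^\two_h\wto^\ast\mu^\one\otimes\mu^\two$ in $\Radon(\Omega\times\Omega)$; this can be checked on products $\varphi(x)\psi(y)$ of test functions and then extended by density in $C_c(\Omega\times\Omega)$, or cited from measure theory. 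Since all supports sit in the fixed compact set $K$ (because each $V^i_h\in\cA$ and $\cA$ is closed by Lemma \ref{lem:Aclosed}, so also $V^i\in\cA$), the product measures are supported in the compact set $K\times K$, which makes the passage to the limit unproblematic.

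Next I would handle the set $D_{\Omega,\veps}$ against which the product measure is tested. The obstacle is that $D_{\Omega,\veps}$ is \emph{open}, and weak-$\ast$ convergence only yields lower semicontinuity on open sets, $(\mu^\one\otimes\mu^\two)(D_{\Omega,\veps})\le\liminf_{h}(\mu^\one_h\otimes\mu^\two_h)(D_{\Omega,\veps})$ — which is exactly the direction we want. Combining this with the uniform hypothesis $(\mu^\one_h\otimes\mu^\two_h)(D_{\Omega,\veps})\le\veps^3/\veps_0$ valid for every $h$, we obtain
$$
(\mu^\one\otimes\mu^\two)(D_{\Omega,\veps})\le\liminf_{h\to\infty}(\mu^\one_h\otimes\mu^\two_h)(D_{\Omega,\veps})\le\frac{\veps^3}{\veps_0}
$$
for each fixed $\veps\in(0,\veps_0)$. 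Since this holds for all such $\veps$, the limit pair $V^\one,V^\two$ satisfies the no-overlap condition \eqref{eq:nooverlap} with the same $\veps_0$, which is the claim.

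The one point that needs a short justification is the lower-semicontinuity-on-open-sets property, i.e.\ the Portmanteau-type statement that $\rho_h\wto^\ast\rho$ in $\Radon(Y)$ with $Y$ locally compact implies $\rho(U)\le\liminf_h\rho_h(U)$ for every open $U\sbs Y$; this follows by testing against an increasing sequence of functions $\varphi_k\in C_c(U)$ with $0\le\varphi_k\le1$ and $\varphi_k\uparrow\chi_U$, using $\rho(\varphi_k)=\lim_h\rho_h(\varphi_k)\le\liminf_h\rho_h(U)$ and then letting $k\to\infty$ by monotone convergence. Here $Y=\Omega\times\Omega$ (or, since everything is supported in $K\times K$, one may equally work on the compact space $K\times K$), and $U=D_{\Omega,\veps}$. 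With this remark in place the argument is complete; I do not expect any genuine difficulty, the proof being essentially a one-line application of the direct-method machinery once the product convergence of the mass measures is recorded.
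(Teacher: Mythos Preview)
Your proof is correct and follows essentially the same route as the paper's: weak-$\ast$ convergence of the mass measures, passage to the product measure, and lower semicontinuity on the open set $D_{\Omega,\veps}$. The only quibble is that the lemma as stated does not assume $V^i_h\in\cA$, so your appeal to compact support in $K$ is not literally justified; however this point is inessential, and the paper simply cites a standard result (Bogachev) for the product convergence without invoking compact support.
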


\begin{proof} The convergence $V^i_h\wto^\ast V^i$ in $AV_2^o(\Omega)$ implies $\mu_{V^i_h}\wto^\ast\mu_{V^i}$ in $\Radon(\Omega)$.
	The product of Radon measures in $\Radon(\Omega)$ commutes with weak-$\ast$ limits \cite[Thm.\ 8.4.10, p.\ 198]{Bogachev2:07}, namely
	$$
	\mu_{V^{\one}_h}\wto^\ast\mu_{V^{\one}},\quad \mu_{V^{\two}_h}\wto^\ast\mu_{V^{\two}}\quad\text{in}\quad\Radon(\Omega)
	\quad\Longrightarrow\quad
	\mu_{V^{\one}_h}\otimes\mu_{V^{\two}_h}\wto^\ast\mu_{V^{\one}}\otimes\mu_{V^{\two}}\quad\text{in}\quad\Radon(\Omega\times\Omega).
	$$
	Let $\veps\in (0,\veps_0)$. By lower semicontinuity of measures on the open set $D_{\Omega,\veps}$ and by \eqref{eq:nooverlap} for $V^\one_h$ and $V^\two_h$, we obtain
	$$
	(\mu_{V^{\one}}\otimes\mu_{V^{\two}})(D_{\Omega,\veps})
	\leq\liminf_{h\to \infty}\left(\mu_{V^{\one}_h}\otimes\mu_{V^{\two}_h}\right)(D_{\Omega,\veps})\leq \veps^3/\veps_0,
	$$
which proves the claim.
\end{proof}

Note that the no-overlap condition \eqref{eq:nooverlap} implies mutual singularity of varifolds. We have the following. 

\begin{lemma}[Mutual singularity]\label{lem:sing}
	If $V^\one,V^\two\in AV_2(\Omega)$ satisfy \eqref{eq:nooverlap} and, for $i=\one$ or $\two$, there exists $c>0$ with $\mu_i(B_\veps(x))\geq c\,\veps^2$ for all $x\in M_i$, $\veps\in (0,\veps_0)$, then $V^\one\bot V^\two$ in $\Radon(\Omega\times G_{2,3})$. 
\end{lemma}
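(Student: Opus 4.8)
The plan is to read the statement off from item~(i) of Lemma~\ref{lem:nooverlap} and then to lift mutual singularity of the weight measures to mutual singularity of the varifolds through the canonical projection $\pi\colon\Omega\times G_{2,3}\to\Omega$.

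First I would record the structure of the objects. Since $V^\one,V^\two\in AV_2(\Omega)\sbs IV_2(\Omega)$, we may write $V^i=v(M_i,\theta_i)$, so that $\mu_{V^i}=\theta_i\,\Haus^2\llc M_i$ with $M_i\sbs\Omega$ countably $2$-rectifiable and $\Haus^2$-measurable and $\theta_i\colon M_i\to\NN$ a positive multiplicity. If $M_\one=\emptyset$ or $M_\two=\emptyset$, then the corresponding varifold is $0$ and $V^\one\bot V^\two$ holds trivially; so assume $M_\one,M_\two\neq\emptyset$. The assumed lower density bound $\mu_i(B_\veps(x))\ge c\,\veps^2$, evaluated at some $x\in M_i$ and some $\veps\in(0,\veps_0)$, forces $\mu_i(\Omega)\ge\mu_i(B_\veps(x))>0$; hence both $\mu_\one$ and $\mu_\two$ have positive finite mass and the standing hypotheses of Lemma~\ref{lem:nooverlap} are met.

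Next I would argue by contraposition of Lemma~\ref{lem:nooverlap}(i) (or of its analogue with $\mu_\one,\mu_\two$ interchanged, according to whether the density bound is assumed for $i=\two$ or $i=\one$; note that $(\mu_\one\otimes\mu_\two)(D_{\Omega,\veps})=(\mu_\two\otimes\mu_\one)(D_{\Omega,\veps})$, since $D_{\Omega,\veps}$ is symmetric, so the two versions coincide). Indeed, if $\Haus^2(M_\one\cap M_\two)>0$ held, then that lemma would produce a violation of \eqref{eq:nooverlap}, contradicting the assumption that $V^\one,V^\two$ satisfy the no-overlap condition with $\veps_0$. Therefore $\Haus^2(M_\one\cap M_\two)=0$.

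It remains to conclude, which is elementary measure theory. Because $\theta_i\ge1$ we have $\Haus^2(M_i)\le\mu_i(\Omega)<\infty$, so $\lambda:=\Haus^2\llc(M_\one\cup M_\two)$ is a finite measure, and both $\mu_{V^\one}$ and $\mu_{V^\two}$ are absolutely continuous with respect to $\lambda$, with densities $\theta_\one\,\chi_{M_\one}$ and $\theta_\two\,\chi_{M_\two}$; since their product is carried by $M_\one\cap M_\two$, which is $\lambda$-null, the two densities have disjoint supports mod $\lambda$, whence $\mu_{V^\one}\bot\mu_{V^\two}$ in $\Radon(\Omega)$. Choose a Borel set $B\sbs\Omega$ with $\mu_{V^\one}(\Omega\setminus B)=0=\mu_{V^\two}(B)$. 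As $\mu_{V^i}=\pi_\sharp V^i$ and the Borel cylinder $B\times G_{2,3}=\pi^{-1}(B)$, we obtain $V^\two(B\times G_{2,3})=\mu_{V^\two}(B)=0$ and $V^\one\bigl((\Omega\times G_{2,3})\setminus(B\times G_{2,3})\bigr)=\mu_{V^\one}(\Omega\setminus B)=0$, so $V^\one$ and $V^\two$ are concentrated on complementary Borel subsets of $\Omega\times G_{2,3}$, i.e.\ $V^\one\bot V^\two$ in $\Radon(\Omega\times G_{2,3})$. I do not expect any genuine obstacle: the substance of the argument is the single application of Lemma~\ref{lem:nooverlap}(i), and the only points requiring a little care are the degenerate case $V^i=0$ (needed to guarantee the positive-mass hypothesis of that lemma) and the routine pushforward step at the end.
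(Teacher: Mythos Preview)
Your proof is correct and follows essentially the same route as the paper: apply the contrapositive of Lemma~\ref{lem:nooverlap}\,(i) to obtain $\Haus^2(M_\one\cap M_\two)=0$, deduce $\mu_{V^\one}\bot\mu_{V^\two}$ by elementary measure theory, and lift to $V^\one\bot V^\two$ via $\pi$. The paper is terser (it takes $M_\two$ directly as the separating Borel set rather than passing through a dominating measure $\lambda$, and it leaves the pushforward step implicit), while you are more careful about edge cases and spell out the cylinder argument, but the substance is identical.
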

\begin{proof}
	By Lemma \ref{lem:nooverlap} {\em (i)}, $\mu_i=\mu_{V^{i}}=\theta_i\,\Haus^2\llc M_i$  ($i=\one,\two$) satisfy $\Haus^2(M_\one\cap M_\two)=0$.  Therefore, as $\theta_1\in L^1_{\loc;\Haus^2\llc M_\one}(M_\one;\NN)$ and $M_\two\cap(\Omega\setminus M_\two)=\emptyset$, we have
	$$
	\mu_\one(M_2)=\int_{M_\one\cap M_\two}\theta_\one\,\dint \Haus^2=0
	\qquad\text{and}\qquad
	\mu_\two(\Omega\setminus M_\two)=\int_{M_\two\cap(\Omega\setminus M_\two)}\theta_\two\,\dint \Haus^2=0.
	$$
	Thus $\mu_\one$ and $\mu_\two$ are separated by the Borel set $M_\two$ (and also $M_1$ by an analogous argument). This shows that $\mu_\one\,\bot\,\mu_\two$ in $\Radon(\Omega)$, which yields $V^\one\bot V^\two$ in $\Radon(\Omega\times G_{2,3})$.
\end{proof}
For modeling closed two-phase membranes, the individual phases are required to have matching boundaries, or equivalently, the boundary of the sum (union) of $V^\one$ and $V^\two$ must be empty. 
The sum of mutually singular varifolds features curvature functions that can be expressed in terms of characteristic functions of individual supports; the boundary measure of the sum is simply the sum of the individual boundary measures. Indeed, we have the following.
\begin{lemma}[{{Sum of mutually singular curvature varifolds}}]\label{lem:sum}
If $V^1,V^2\in AV_2(\Omega)$ satisfy $V^\one\bot V^\two$, then $V^1+V^2\in AV_2(\Omega)$ with the curvature functions
\begin{equation}\label{eq:sum_A}
A^{V^\one+V^\two}:=A^{V^\one}\chi_{\supp(V^\one)}+A^{V^\two}\chi_{\supp(V^\two)}
\end{equation}
and the boundary measure 
\begin{equation}\label{eq:sum_bdry}
\d(V^\one+V^\two):=\d V^\one+\d V^\two.
\end{equation}
\end{lemma}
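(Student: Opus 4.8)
The plan is to verify that the proposed curvature functions and boundary measure indeed satisfy Mantegazza's defining identity for $V^\one+V^\two$. First I would recall that since $V^\one \bot V^\two$ in $\Radon(\Omega\times G_{2,3})$, there are disjoint Borel sets carrying the two varifolds; in the integral setting with $\mu_i = \theta_i\,\Haus^2\llc M_i$ this means $\Haus^2(M_\one\cap M_\two)=0$, so we may assume $M_\one\cap M_\two=\emptyset$ after modifying on a null set, and $\supp(V^\one)$, $\supp(V^\two)$ play the role of separating sets. Consequently $\chi_{\supp(V^\one)}$ and $\chi_{\supp(V^\two)}$ are complementary ($V^\one+V^\two$)-almost everywhere on the relevant carriers, and the right-hand side of \eqref{eq:sum_A} is a well-defined $L^1_{\loc,V^\one+V^\two}$ function.

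The core computation is the following. Fix $\varphi\in C_c^1(\Omega\times G_{2,3})$ and $1\leq i\leq 3$. Mantegazza's identity holds for $V^\one$ with $A^{V^\one}$, $\d V^\one$ and for $V^\two$ with $A^{V^\two}$, $\d V^\two$. I would add the two identities; on the left-hand side every term is an integral against $\dint V^\one$ plus the same term against $\dint V^\two$, which equals the corresponding integral against $\dint(V^\one+V^\two)$ since $V^\one+V^\two$ is just the sum of the two measures. In the two curvature-dependent terms $(\d_{P_{jk}}\varphi)A_{ijk}$ and $A_{jij}\varphi$, on $\supp(V^\one)$ one has $A^{V^\one+V^\two}=A^{V^\one}$ (because $\chi_{\supp(V^\one)}=1$, $\chi_{\supp(V^\two)}=0$ there, using mutual singularity), and symmetrically on $\supp(V^\two)$; since $V^i$ is concentrated on $\supp(V^i)$, replacing $A^{V^i}$ by $A^{V^\one+V^\two}$ inside the integral $\dint V^i$ changes nothing. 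Hence the summed left-hand side equals the single integral against $\dint(V^\one+V^\two)$ with curvature function $A^{V^\one+V^\two}$, and the summed right-hand side is $-\int \varphi\,\dint((\d V^\one)_i+(\d V^\two)_i)$. This is exactly Mantegazza's identity for $V^\one+V^\two$ with the curvature function \eqref{eq:sum_A} and boundary measure \eqref{eq:sum_bdry}, and one checks the requisite integrability: $A^{V^\one+V^\two}\in L^1_{\loc,V^\one+V^\two}$ and $\d V^\one+\d V^\two\in\Radon^3(\Omega\times G_{2,3})$, so that $V^\one+V^\two\in AV_2(\Omega)$. One should also note that $V^\one+V^\two$ is still an integral $2$-varifold, being $v(M_\one\cup M_\two,\theta_\one\chi_{M_\one}+\theta_\two\chi_{M_\two})$, since $M_\one\cup M_\two$ is $2$-rectifiable and the multiplicity is locally summable.

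I expect the only genuinely delicate point to be the bookkeeping around mutual singularity: one must be careful that $\supp(V^\one)$ and $\supp(V^\two)$ need not be disjoint (they may meet on an $\Haus^2$-null, e.g.\ $\Haus^1$-dimensional, interface), so $\chi_{\supp(V^\one)}+\chi_{\supp(V^\two)}$ could exceed $1$ on a $(V^\one+V^\two)$-null set. This is harmless because each $V^i$ gives zero mass to $\supp(V^\two)\cap\supp(V^\one)$-type overlaps of the wrong carrier — more precisely, $\mu_\one(M_\two)=0=\mu_\two(M_\one)$ from $\Haus^2(M_\one\cap M_\two)=0$, so the potential double-counting sits inside a $(V^\one+V^\two)$-null set and does not affect any integral. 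I would make this explicit by working with the carriers $M_\one,M_\two$ rather than the closed supports when splitting the integrals, invoking $\mu_\one\bot\mu_\two$ exactly as in the proof of Lemma \ref{lem:sing}. Everything else is a routine linearity-of-integrals argument.
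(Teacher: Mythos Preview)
Your proposal is correct and follows essentially the same approach as the paper: both verify Mantegazza's integration-by-parts identity for $V^\one+V^\two$ by adding the two individual identities and using mutual singularity to replace $A^{V^i}$ by $A^{V^\one+V^\two}$ inside each $\dint V^i$-integral. Your discussion of the null-set bookkeeping (supports versus carriers, possible overlap on $\Haus^2$-null sets) is in fact more careful than the paper's own proof, which presents the computation tersely without spelling out these measure-theoretic points.
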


\begin{proof}
	We show that the sum $V^\one+V^\two$ satisfies Mantegazza's integration by parts formula: Let $\varphi\in C_c^1(\Omega\times G_{2,3})$. Then $A^{V^\one+V^\two}=A^{V^1}\chi_{\supp(V^\one)}+A^{V^2}\chi_{\supp(V^\two)}$ yields
	\begin{align*}
	&\int_{\Omega\times G_{2,3}}
	\left(P_{ij}\d_j\varphi+(\d_{P_{jk}}\varphi)\,A_{ijk}^{V^1+V^2}+\varphi\,A_{jij}^{V^1+V^2}\right)\dint(V^1+V^2)\\
	&=\int_{\Omega\times G_{2,3}}\left(P_{ij}\d_j\varphi+(\d_{P_{jk}}\varphi)\,A_{ijk}^{V^1}+\varphi\, A_{jij}^{V^1}\right)\dint V^1
	\\
&\quad +\int_{\Omega\times G_{2,3}}\left(P_{ij}\d_j\varphi+(\d_{P_{jk}}\varphi)\,A_{ijk}^{V^2}+\varphi\, A_{jij}^{V^2}\right)\dint V^2\\
	&=-\int_{\Omega\times G_{2,3}} \varphi\dint (\d V^1)_i-\int_{\Omega\times G_{2,3}} \varphi\dint (\d V^2)_i\\
	&=-\int_{\Omega\times G_{2,3}}\varphi\dint (\d V^1+\d V^2)_i=-\int_{\Omega\times G_{2,3}}  \varphi\dint (\d(V^1+V^2))_i,
	\end{align*}
	where the last equality follows from $\d V^1+\d V^2=\d(V^1+V^2)$.
\end{proof}

\begin{remark}[{{Sum of general curvature varifolds}}]\label{eichmann}
For $V^\one, V^\two\in AV_m(\Omega)$ with $\Omega\sbs\RR^n$ open we have \eqref{eq:sum_bdry} and $A^{V^\one+V^\two}=f^1A^{V^\one}+f^2A^{V^\two}$ where $f^i=\dint V^i/\dint(V^\one+V^\two)\in L^1_{\loc, V^\one+V^\two}(\Omega\times G_{m,n})$ for $i=\one,\two$.
The density (Radon-Nikodym derivative) $f^i$ always exists because of the absolute continuity $V^i\ll V^\one+V^\two$, as for all $U\subseteq \Omega\times G_{m,n}$ open,  $(V^\one+V^\two)(U)=0$ implies $V^i(U)=0$, for $i=\one,\two$. In case of mutual singularity, we recover the specific case of  \eqref{eq:sum_A}, for the densities reduce to the characteristic functions $f^i=\chi_{\supp(V^i)}$.
\end{remark}

We note that requiring a zero boundary measure of the sum $V^\one+V^\two$ rules out kinks at phase-boundaries. Indeed, consider two compact $C^2$ surfaces $M_\one$ and $M_\two$ in $\Omega$ with associated curvature varifolds 
$$V^i:=v(M_i,1)\in AV_2(\Omega)$$ 
for $i=1,2$. Their Mantegazza and Allard boundary measures are given by
$$
\d V^i=\delta_{\d M_i}\otimes\nu_i\,\delta_{q(\ast\nu_i)}
\qquad\text{and}\qquad
\pi_\sharp(\d V^i)=\nu_i\,\delta_{\d M_i}
$$
respectively. Here $\nu_i$ denotes the inwards-pointing unit normal of $\d M_i$ and $q(\ast\nu_i)\in G_{2,3}$ is the corresponding unoriented tangent plane.  Assume that the sets only overlap at their common boundary 
$$\d M_\one=\d M_\two=M_\one\cap M_\two\neq\emptyset$$ 
and satisfy the no-overlap condition \eqref{eq:nooverlap} (thanks to Lemma \ref{lem:nooverlap} this is the case if $M_\one$ and $M_\two$ have minimal distance $\veps_0$ elsewhere; cf.\ Figure \ref{fig:H1}). Then their sum reads
$$V^\one+V^\two=v(M^\one\cup M^\two,1)$$
and with \eqref{eq:sum_bdry} satisfies
$\d (V^\one+V^\two)=\d V^\one+\d V^\two
=\delta_{M_\one\cap M_\two}\otimes\left(\nu_\one\,\delta_{q(\ast\nu_\one)}+\nu_\two\,\delta_{q(\ast\nu_\two)}\right)$ 
and
$\pi_\sharp(\d (V^\one+V^\two))=(\nu_\one+\nu_\two)\,\delta_{M_\one\cap M_\two}$.
Consequently, if 
$$\d (V^\one+V^\two)=0,$$ 
then $\nu_\one=-\nu_\two$. 
Thus the unit normals must be antiparallel, showing that $M_\one\cup M_\two$ is $C^1$  and no kinks occur. 

We are in the position to present the main results of this section.

\begin{theorem}[Existence, two-phase case]
Fix $m^\one,m^\two,\veps_0,e >0$ and let the bounds \eqref{eq:bounds} hold true both for $\beta^\one,\gamma^\one$ and $\beta^\two,\gamma^\two$. Then there exists a pair $V^{\one},V^{\two}\in {\mathcal A}$ solving
\begin{align*}
\min\Big\{{\FCH}_{,2}(V^{\one},V^{\two}):
&\:\:V^{\one},V^{\two}\in\mathcal{A},\quad
\mu_{V^{\one}}(\Omega)=m^{\one},\quad \mu_{V^{\two}}(\Omega)=m^{\two},\quad \d(q_\sharp V^\one+q_\sharp V^\two)=0\\
& (\mu_{V^{\one}}\otimes\mu_{V^{\two}})(D_{\Omega,\veps}) \leq \veps^3/\veps_0\:\:\text{for all}\:\: \veps\in(0,\veps_0),\quad\encvol(V^\one+V^\two)=e\:\Big\}.
\end{align*}
\end{theorem}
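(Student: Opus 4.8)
The plan is to run the Direct Method on the pair $(V^\one,V^\two)$, following the single-phase Theorem~\ref{thm:single} componentwise and then checking that the three genuinely new features — matching boundaries, no overlap, and the joint enclosed-volume constraint — survive the passage to the limit. First I would note that admissible pairs exist (e.g.\ two smooth compact surfaces meeting along a common boundary curve and otherwise at distance at least $\veps_0$, carrying the prescribed areas $m^\one,m^\two$ and enclosing volume $e$; cf.\ Figure~\ref{fig:H1}), so one can fix a minimizing sequence $(V^\one_h,V^\two_h)$ with $V^i_h\in\cA$, $\mu_{V^i_h}(\Omega)=m^i$, $\d(q_\sharp V^\one_h+q_\sharp V^\two_h)=0$, $\encvol(V^\one_h+V^\two_h)=e$, and \eqref{eq:nooverlap} holding for the pair with parameter $\veps_0$. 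Since each $\FCH^i$ is bounded below by $-c_2^i\,\mu_{V^i_h}(\Omega)=-c_2^i\,m^i$ through the coercivity estimate \eqref{eq:estimate} (applied with $\beta^i,\gamma^i$, which satisfy \eqref{eq:bounds}) and the two line-tension terms $\sigma^i\|\d(q_\sharp V^i_h)\|(\Omega\times G_{2,3})$ are nonnegative, the uniform bound on ${\FCH}_{,2}(V^\one_h,V^\two_h)$ forces each of the four summands to be bounded; in particular both $\|\d(q_\sharp V^i_h)\|(\Omega\times G_{2,3})$ and, again by \eqref{eq:estimate}, $\|A^{q_\sharp V^i_h}\|^2_{L^2_{q_\sharp V^i_h}(\Omega\times G_{2,3})}$ are bounded uniformly in $h$, for $i=\one,\two$.

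Next I would extract compactness for each phase exactly as in Theorem~\ref{thm:single}. The first-variation bound \eqref{eq:firstvarbound}, together with the two bounds just obtained, gives $\|\delta(q_\sharp V^i_h)\|(\Omega)$ bounded; combining this with $\mathbb{M}_{\d\curr{V^i_h}}(\Omega)\le m^o$ (from $V^i_h\in\cA$) and $\mu_{V^i_h}(\Omega)=m^i$, Hutchinson's Theorem~\ref{thm:Hutchinson_Thm3} yields, along a not relabeled subsequence, $V^i_h\wto^\ast V^i$ in $\Radon(\Omega\times G_{2,3}^o)$ with $V^i\in IV_2^o(\Omega)$. Lemma~\ref{lem:orconv} then provides $q_\sharp V^i_h\wto^\ast q_\sharp V^i$ in $\Radon(\Omega\times G_{2,3})$, and Mantegazza's Theorem~\ref{thm:Mantegazza_Thm6} with $p=2$ (using the uniform $L^2$ bound on $A^{q_\sharp V^i_h}$, the mass bound, and the bound on $\|\d(q_\sharp V^i_h)\|$) produces $\widetilde V^i\in AV_2(\Omega)$ with $q_\sharp V^i_h\wto^\ast \widetilde V^i$; uniqueness of weak-$\ast$ limits forces $\widetilde V^i=q_\sharp V^i$, hence $V^i\in AV_2^o(\Omega)$ and $V^i_h\wto^\ast V^i$ in $AV_2^o(\Omega)$ for $i=\one,\two$. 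The same theorem also records $\d(q_\sharp V^i_h)\wto^\ast\d(q_\sharp V^i)$ in $\Radon^3(\Omega\times G_{2,3})$.

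It then remains to pass the constraints to the limit and invoke lower semicontinuity. Closedness of $\cA$ is Lemma~\ref{lem:Aclosed}; the area constraints $\mu_{V^i}(\Omega)=m^i$ survive because the supports sit in the fixed compact $K$; the no-overlap condition \eqref{eq:nooverlap} survives by its closedness lemma proved above (products of Radon measures commute with weak-$\ast$ convergence, and weak-$\ast$ limits of measures are lower semicontinuous on the open set $D_{\Omega,\veps}$). For the enclosed volume, linearity of $\encvol$ and \eqref{eq:enclosedvolumeconv} applied to $V^\one_h+V^\two_h\wto^\ast V^\one+V^\two$ (uniformly compact supports) give $\encvol(V^\one+V^\two)=\lim_h\encvol(V^\one_h+V^\two_h)=e$. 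For the boundary, $q_\sharp V^\one+q_\sharp V^\two$ and each $q_\sharp V^\one_h+q_\sharp V^\two_h$ are again curvature varifolds with additive boundary measure by Remark~\ref{eichmann}, whence $\d(q_\sharp V^\one+q_\sharp V^\two)=\lim_h\big(\d(q_\sharp V^\one_h)+\d(q_\sharp V^\two_h)\big)=0$. Finally, each $\FCH^i$ is lower semicontinuous along $V^i_h\wto^\ast V^i$ in $AV_2^o(\Omega)$ by Theorem~\ref{thm:lsc}, and each line-tension term $\sigma^i\|\d(q_\sharp V^i)\|(\Omega\times G_{2,3})$ is lower semicontinuous under $\d(q_\sharp V^i_h)\wto^\ast\d(q_\sharp V^i)$ because total variation is lower semicontinuous under weak-$\ast$ convergence; summing, ${\FCH}_{,2}(V^\one,V^\two)\le\liminf_h{\FCH}_{,2}(V^\one_h,V^\two_h)$, so $(V^\one,V^\two)$ is a minimizer.

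The main obstacle is organizational rather than a single hard estimate: the line-tension term must sit inside ${\FCH}_{,2}$ precisely so that the boundary masses $\|\d(q_\sharp V^i_h)\|$ stay bounded, which — together with the coercivity estimate \eqref{eq:estimate} — is exactly what feeds the first-variation bound \eqref{eq:firstvarbound} needed for both compactness theorems; and one must keep the several convergence notions in play (oriented varifolds, unoriented curvature varifolds, the integral currents behind $\curr{\cdot}$, and plain Radon measures) mutually consistent, so that the single limit pair $(V^\one,V^\two)$ simultaneously inherits the curvature, boundary, area, volume, and no-overlap information. The no-overlap and matching-boundary constraints are the new ingredients compared with Theorem~\ref{thm:single}, but both have already been isolated into closedness statements above, so what is left is essentially bookkeeping.
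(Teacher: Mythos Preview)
Your proposal is correct and follows essentially the same route as the paper's proof: run the single-phase compactness argument of Theorem~\ref{thm:single} componentwise, using the line-tension terms in $\FCH_{,2}$ to control $\|\d(q_\sharp V^i_h)\|$ and thereby feed the first-variation bound \eqref{eq:firstvarbound}, and then close the no-overlap, matching-boundary, and enclosed-volume constraints via the lemmas already isolated. If anything, your write-up is more careful than the paper's (you make explicit the lower bound $\FCH^i\ge -c_2^i m^i$ needed to split the energy, invoke Remark~\ref{eichmann} rather than the mutual-singularity Lemma~\ref{lem:sum} for additivity of the boundary measure, and record lower semicontinuity of the line-tension terms separately).
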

\begin{proof}  We essentially follow the argument  for the single-phase case (Theorem \ref{thm:single}),  by letting  $V=V^{i}$ for $i=\one,\,\two$.  In order to reproduce estimate \eqref{eq:firstvarestimate}, we use now  
$$
\|\d (q_\sharp V^i)\|(\Omega\times G_{2,3})\leq c\, {\FCH}_{,2}(V^\one,V^\two).
$$
 Thus the  minimizing $V_h^i\in\cA$ ($i=\one,\,\two$) enjoy the first variation bounds
\begin{align*}
\|\delta(q_\sharp V_h^i)\|(\Omega)
&\leq \sqrt{2\,\mu_{V_h^i}(\Omega)}\, \|A^{q_\sharp V_h^i}\|_{L^2_{q_\sharp V_h^i}(\Omega\times G_{2,3})}
+\|\d (q_\sharp V_h^i)\|(\Omega\times G_{2,3})\nn\\
&\leq c'\left(\sqrt{\mu_{V_h^i}(\Omega)}\,\sqrt{{\FCH}_{,2}(V^\one,V^\two)+\mu_{V_h^i}(\Omega)}+{\FCH}_{,2}(V^\one,V^\two)\right)<\infty.
\end{align*} 
Lemma \ref{lem:nooverlap} gives closedness for the no-overlap condition. We can calculate the boundary measure of the sum of curvature varifolds via formula \eqref{eq:sum_bdry} and by Theorem \ref{thm:Mantegazza_Thm6} we obtain 
$$
\d (q_\sharp V_h^\one+q_\sharp V_h^\two)=\d (q_\sharp V_h^\one)+\d(q_\sharp V_h^\two)
\wto^\ast \d (q_\sharp V^\one)+\d(q_\sharp V^\two)=\d (q_\sharp V^\one+q_\sharp V^\two).
$$
This shows conservation of the zero boundary measure, because if $\d(q_\sharp V^\one_h+ q_\sharp V^\two_h)=0$, then also 
$\d(q_\sharp V^\one+ q_\sharp V^\two)=0$.  We note that the enclosed volume is defined for the sum $V^\one+ V^\two$ of oriented varifolds $\Radon(\Omega\times G_{2,3}^o)$. Therefore, the volume condition is stable under varifold limits also in the multiphase case.
\end{proof}

The existence result directly extends to sharp-interface $N$-phase membranes with energy
$$
\FCHN(V^1,\ldots,V^N)
:={\sum_{i=1}^N}\left(\FCH^{i}(V^i)+\sigma^i\,\|\d (q_\sharp V^i)\|\right)
$$
for $V^i\in\cA$ and $\sigma^i>0$ ($i=1,\ldots,N$).

\begin{theorem}[Existence, multiphase case]\label{thm:multi}
Let $m^i>0$ ($i=1,\ldots,N$) and $\veps_0, e >0$ be fixed. If $-{6} \,\beta^i<5\gamma^i<0$, for $i=1,\ldots,N$, then there exists a solution $(V^i)_{i=1}^N$ of
\begin{align*}
\min\Big\{&\FCHN(V^1,\ldots,V^N):\:\: V^i\in\mathcal{A},\quad \mu_{V^i}(\Omega)=m^i\:\: (i=1,\ldots,N),\quad \textstyle{\d(\sum_{i=1}^N q_\sharp V^i)}=0,\\
& \: \textstyle{\sum_{i,j=1,\:i\neq j}^N}\left(\mu_{V^i}\otimes\mu_{V^j}\right)(D_{\Omega,\veps}) \leq \veps^3/\veps_0\:\:\text{for all}\:\: \veps\in(0,\veps_0),\quad\textstyle{\encvol(\sum_{i=1}^N V^i)=e}\:\Big\}.
\end{align*}
\end{theorem}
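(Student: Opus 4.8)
The plan is to follow the template of the two-phase existence proof, which itself mirrors the single-phase Theorem \ref{thm:single}, extended in the obvious way to $N$ components. First I would note that admissible configurations exist: take $N$ pairwise-disjoint embedded spheres (or ellipsoids) with the prescribed areas $m^i$, placed with mutual distance at least $\veps_0$, and scaled/positioned so that the total enclosed volume of the sum equals $e$; Lemma \ref{lem:nooverlap}{\em (ii)} guarantees they meet the no-overlap condition, and their boundary currents vanish so $\d(\sum_i q_\sharp V^i)=0$. Hence the infimum is finite and a minimizing sequence $(V^1_h,\dots,V^N_h)$ exists.

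Next I would extract the compactness bounds componentwise. The bound $-6\beta^i<5\gamma^i<0$ lets me invoke Proposition \ref{prop:convexity} for each phase, so $\|A^{q_\sharp V^i_h}\|^2_{L^2}$ is controlled by $\FCH^i(V^i_h)+\mu_{V^i_h}(\Omega)\le \FCHN(V^1_h,\dots,V^N_h)+m^i$, and the boundary term is bounded by $\sigma^{-i}\FCHN(\cdots)$, all uniformly in $h$ by minimality. Combining with \eqref{eq:firstvarbound} exactly as in \eqref{eq:firstvarestimate} yields $\|\delta(q_\sharp V^i_h)\|(\Omega)$ bounded, and $\mathbb{M}_{\d\curr{V^i_h}}(\Omega)\le m^o$ by admissibility. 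Applying Theorems \ref{thm:Hutchinson_Thm3}, \ref{thm:Mantegazza_Thm6}, Lemma \ref{lem:orconv}, and uniqueness of limits to each index $i$ in turn (passing to a common subsequence, not relabeled) produces limits $V^i$ with $V^i_h\wto^\ast V^i$ in $AV_2^o(\Omega)$, and $V^i\in\cA$ by Lemma \ref{lem:Aclosed}.

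Then I would check that every constraint passes to the limit. The area constraints $\mu_{V^i}(\Omega)=m^i$ follow from the compact support contained in $K$ together with $\mu_{V^i_h}\wto^\ast\mu_{V^i}$, as in Theorem \ref{thm:single}. Each of the $N(N-1)$ no-overlap inequalities $(\mu_{V^i}\otimes\mu_{V^j})(D_{\Omega,\veps})\le\veps^3/\veps_0$ is preserved by the closedness lemma (commutation of products of Radon measures with weak-$\ast$ limits plus lower semicontinuity on open sets). For the zero-boundary condition I would use that all the $V^i_h$, hence all the $V^i$, are mutually singular — this needs Lemma \ref{lem:sing}, which requires the lower density bound $\mu_i(B_\veps(x))\ge c\veps^2$; for integral $2$-varifolds of locally bounded first variation this is the standard Allard-type monotonicity/density lower bound, which I would cite — so that Lemma \ref{lem:sum} gives $\d(\sum_i q_\sharp V^i_h)=\sum_i\d(q_\sharp V^i_h)$, and by Theorem \ref{thm:Mantegazza_Thm6} this converges weakly-$\ast$ to $\sum_i\d(q_\sharp V^i)=\d(\sum_i q_\sharp V^i)$, which therefore vanishes. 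The enclosed-volume constraint $\encvol(\sum_i V^i)=e$ is stable by linearity of $\encvol$ and \eqref{eq:enclosedvolumeconv}.

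Finally, lower semicontinuity of the energy: each $\FCH^i$ is lower semicontinuous with respect to convergence in $AV_2^o(\Omega)$ by Theorem \ref{thm:lsc}, and each boundary term $\sigma^i\|\d(q_\sharp V^i_h)\|(\Omega\times G_{2,3})$ is lower semicontinuous under weak-$\ast$ convergence of the boundary measures (lower semicontinuity of the total variation). Summing these over $i$ gives $\FCHN(V^1,\dots,V^N)\le\liminf_h\FCHN(V^1_h,\dots,V^N_h)$, and the Direct Method concludes. The main obstacle — and the only point not entirely routine given the earlier results — is securing the uniform density lower bound needed to apply Lemma \ref{lem:sing} (equivalently, ensuring the limiting phases remain mutually singular so that Lemma \ref{lem:sum} and hence the boundary formula \eqref{eq:sum_bdry} apply); this rests on the Allard density estimate for integral varifolds with locally bounded first variation, whose hypotheses are met thanks to the uniform first-variation bound already established.
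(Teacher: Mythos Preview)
Your outline is essentially the paper's own argument: componentwise coercivity via Proposition \ref{prop:convexity}, compactness via Theorems \ref{thm:Hutchinson_Thm3} and \ref{thm:Mantegazza_Thm6}, closedness of the constraints, and lower semicontinuity from Theorem \ref{thm:lsc} plus lower semicontinuity of the total variation for the boundary terms.

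The one place where you take a longer route than necessary is the zero-boundary constraint. You propose to establish mutual singularity of the phases via Lemma \ref{lem:sing}, which in turn requires a uniform lower density bound $\mu_i(B_\veps(x))\ge c\,\veps^2$ on all of $M_i$ for all $\veps\in(0,\veps_0)$, and you flag this as the main obstacle. But this detour is unnecessary: Remark \ref{eichmann} states that the boundary identity \eqref{eq:sum_bdry}, namely $\d(V^\one+V^\two)=\d V^\one+\d V^\two$, holds for \emph{arbitrary} curvature varifolds $V^\one,V^\two\in AV_m(\Omega)$, without any mutual-singularity hypothesis (mutual singularity is only needed for the specific form \eqref{eq:sum_A} of the curvature function, which plays no role here). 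Hence $\d\big(\sum_i q_\sharp V^i_h\big)=\sum_i\d(q_\sharp V^i_h)$ and the analogous identity for the limits hold outright, and Theorem \ref{thm:Mantegazza_Thm6} passes the zero-boundary condition to the limit exactly as in the two-phase proof. With this observation your ``main obstacle'' disappears and the argument is complete.
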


As for the  single-phase case,  the area and enclosed-volume bounds $m^i$ and $e$ are required to respect the isoperimetric inequality
$$
(6\,\sqrt{\pi}\,e)^{1/3}\leq \left(\textstyle{\sum_{i=1}^N}m^i\right)^{1/2}
$$
to include smooth multiphase minimizers.

\subsection{Regularity and diameter bounds}

We collect here some remarks on all minimizers $V$  for  the single-phase problem (Theorem \ref{thm:single}) and  on all  individual phases $V^i$ of minimizers (or their sum)  for  the sharp-interface multiphase problem (Theorem \ref{thm:multi}). 

\begin{lemma}[$C^2$-rectifiability]\label{lem:menne}
Let $V\in AV_2^o(\Omega)$. Then, there exists a countable collection $(\Sigma_i)_{i=1}^\infty$ of $C^2$ surfaces in $\RR^3$ such that 
$$
\textstyle{\mu_V\left(\Omega\setminus\bigcup_{i=1}^\infty\Sigma_i\right)=0.}
$$
Moreover, for each $\Sigma_i$, the second fundamental forms of $V$ and $\Sigma_i$ $\mu_V$-a.e.\ agree on $\Omega\cap \Sigma_i$.
\end{lemma}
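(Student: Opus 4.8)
The plan is to pass to the unoriented counterpart $W:=q_\sharp V\in AV_2(\Omega)$, which is harmless since $\mu_W=\mu_V$ as measures on $\Omega$ (both equal $(\theta_++\theta_-)\,\Haus^2\llc M$), so that a set is $\mu_V$-negligible exactly when it is $\mu_W$-negligible, and then to invoke Menne's second-order rectifiability theorem for integral varifolds of locally bounded first variation. \textbf{Step 1 (locally bounded first variation).} By Mantegazza's first variation formula \eqref{eq:Mantegazzaformula} we have $\delta W=-\pi_\sharp(\bar H^W\,W+\d W)$, which is an $\RR^3$-valued Radon measure because $\bar H^W\in L^1_{\loc,W}$ — a consequence of \eqref{eq:HA} together with $A^W\in L^1_{\loc,W}$ — and $\d W\in\Radon^3(\Omega\times G_{2,3})$; hence $W$ has locally bounded first variation. \textbf{Step 2 (apply Menne).} Since $W\in IV_2(\Omega)$ is an integral $2$-varifold, Menne's theorem — together with the accompanying description of the second-order structure — then provides a countable family $(\Sigma_i)_{i=1}^\infty$ of $2$-dimensional $C^2$ submanifolds of $\RR^3$ with $\mu_V\big(\Omega\setminus\bigcup_i\Sigma_i\big)=\mu_W\big(\Omega\setminus\bigcup_i\Sigma_i\big)=0$, the $\mu_W$-a.e.\ existence of the approximate second fundamental form of $W$, and the property that, for $\mu_W$-a.e.\ $x\in\Sigma_i$, the approximate tangent plane of $W$ at $x$ is $T_x\Sigma_i$ and the approximate second fundamental form of $W$ at $x$ coincides with the classical second fundamental form $\II^{\Sigma_i}(x)$.

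\textbf{Step 3 (matching the two notions of second fundamental form).} It remains to identify the approximate second fundamental form supplied by Menne with the generalized second fundamental form $\II^W$ of the curvature varifold $W$, equivalently with the curvature functions $A^W$. I would do this by a uniqueness argument localized to each $C^2$ piece: on $\Sigma_i$ the classical divergence theorem for $C^2$ surfaces — precisely the computation recalled for $V_M$ at the end of Section \ref{sec:varifolds}, giving $A_{ijk}=P_{il}\d_lP_{jk}$ and $\II=-(\nabla^P\nu)\otimes\nu$ — forces any curvature functions compatible with Mantegazza's integration-by-parts identity along $\Sigma_i$ to equal the classical ones $\mu_W$-a.e., so that $\II^W(x,T_xM)=\II^{\Sigma_i}(x)$ for $\mu_W$-a.e.\ $x\in\Omega\cap\Sigma_i$. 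Combining Steps 2 and 3 and rewriting $\mu_W=\mu_V$ yields the assertion.

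The step I expect to be delicate is Step 3: reconciling the a priori distinct notions of generalized/approximate second fundamental form — Hutchinson--Mantegazza's curvature functions on the one hand, Menne's second-order blow-up of the varifold on the other — and, in the localization to a single $\Sigma_i$, keeping careful track of the boundary term $\d W$ in Mantegazza's identity, which contributes on $\Sigma_i$ even in configurations where $\d W$ vanishes globally. Steps 1 and 2, together with the bookkeeping of $\mu_V$-null sets and the passage between $V$ and $q_\sharp V$, are routine.
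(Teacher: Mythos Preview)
Your approach is essentially identical to the paper's, which in a single line reduces to observing $q_\sharp V\in AV_2(\Omega)$ and citing Menne's theorem directly. Your Steps 1 and 3 merely unpack details the paper leaves implicit; in particular, the identification of the Hutchinson--Mantegazza second fundamental form with the classical one on the $C^2$ pieces that you flag as delicate is not carried out separately in the paper either.
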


\begin{proof}
It suffices to observe $q_\sharp V\in AV_2(\Omega)$ and apply \cite[Theorem 1]{Menne:13}.
\end{proof}

In order to proceed further, we remark that all minimizers have finite Willmore energy \eqref{Willmore}, which in the context of varifolds (and up to normalization) is given by 
$$
\Will(W):=\int_{\Omega} |\bar H^W|^2\,\dint\mu_W=\int_{\Omega\times G_{2,3}} |\bar H^W(x,P)|^2\dint W(x,P).
$$

\begin{lemma}[Bound on the Willmore energy]\label{lem:Willmore}
Assume \eqref{eq:bounds}. Then, for all $V\in AV_2^o(\Omega)$ we have 
$$
\Will(V):=\Will(q_\sharp V)\leq 2\,c_1\left(\FCH(V)+c_2\,\mu_{V}(\Omega)\right),
$$
where $c_1$ and $c_2$ are the constants from Proposition
{\em\ref{prop:convexity}}. In particular, if $\mu_{V}(\Omega)<\infty$ and $\FCH(V)<\infty$, one deduces $\Will(V)<\infty$. 	
\end{lemma}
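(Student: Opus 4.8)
The plan is to chain together two facts already established in the excerpt. First, recall that for the unoriented curvature varifold $q_\sharp V\in AV_2(\Omega)$ the pointwise inequality \eqref{eq:HA} gives $|\bar H^{q_\sharp V}|^2\le 2|A^{q_\sharp V}|^2$ $(q_\sharp V)$-a.e.\ on $\Omega\times G_{2,3}$. Integrating this against $q_\sharp V$ and using the definition of the generalized Willmore energy yields
$$
\Will(q_\sharp V)=\int_{\Omega\times G_{2,3}}|\bar H^{q_\sharp V}(x,P)|^2\dint(q_\sharp V)(x,P)\le 2\int_{\Omega\times G_{2,3}}|A^{q_\sharp V}(x,P)|^2\dint(q_\sharp V)(x,P)=2\,\|A^{q_\sharp V}\|^2_{L^2_{q_\sharp V}(\Omega\times G_{2,3})}.
$$

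Second, I would invoke the coercivity estimate \eqref{eq:estimate} from Proposition \ref{prop:convexity}, which under the parameter bounds \eqref{eq:bounds} provides constants $c_1,c_2>0$ with
$$
\|A^{q_\sharp V}\|^2_{L^2_{q_\sharp V}(\Omega\times G_{2,3})}\le c_1\big(\FCH(V)+c_2\,\mu_V(\Omega)\big).
$$
Combining the two displays gives $\Will(V)=\Will(q_\sharp V)\le 2\,c_1\big(\FCH(V)+c_2\,\mu_V(\Omega)\big)$, which is exactly the claimed bound; the final assertion that $\mu_V(\Omega)<\infty$ and $\FCH(V)<\infty$ imply $\Will(V)<\infty$ is then immediate. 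The only minor bookkeeping point is to recall that the masses of $V$ and its unoriented counterpart coincide, $\mu_{q_\sharp V}(\Omega)=\mu_V(\Omega)$, as noted just before Lemma \ref{lem:orconv}, so that the right-hand side can be written in terms of $\mu_V(\Omega)$.

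There is no genuine obstacle here: the statement is essentially a corollary obtained by composing inequality \eqref{eq:HA} with estimate \eqref{eq:estimate}. The one thing to be careful about is that \eqref{eq:HA} and \eqref{eq:estimate} are statements about the \emph{unoriented} curvature varifold $q_\sharp V\in AV_2(\Omega)$, while $\Will(V)$ and $\FCH(V)$ are defined on the oriented object $V\in AV_2^o(\Omega)$; one must therefore pass through the unoriented counterpart $q_\sharp V$ consistently, which is precisely how $\Will(V)$ was defined in the statement, $\Will(V):=\Will(q_\sharp V)$.
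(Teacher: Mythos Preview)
Your proposal is correct and follows essentially the same approach as the paper: first bound $\Will(q_\sharp V)$ by $2\|A^{q_\sharp V}\|^2_{L^2_{q_\sharp V}}$ via the pointwise inequality \eqref{eq:HA}, then apply the coercivity estimate \eqref{eq:estimate} from Proposition~\ref{prop:convexity}. Your additional remark about $\mu_{q_\sharp V}(\Omega)=\mu_V(\Omega)$ is accurate but not strictly needed, since \eqref{eq:estimate} is already stated with $\mu_V(\Omega)$ on the right-hand side.
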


\begin{proof} Letting $W:=q_\sharp V\in AV_2(\Omega)$, the algebraic estimate $|\bar H^W|^2\leq 2|A^W|^2$, see 
\eqref{eq:HA}, gives
$$
\Will(W)\leq 2\int_{\Omega\times G_{2,3}} |A^W|^2\dint W=2\|A^W\|^2_{L^2_W(\Omega\times G_{2,3})}.
$$
Relation \eqref{eq:estimate} ensures that $\|A^{q_\sharp V}\|^2_{L^2_{q_\sharp V}(\Omega\times G_{2,3})}\leq
c_1\left(\FCH(V)+c_2\,\mu_{V}(\Omega)\right)$, so that the assertion follows. 
\end{proof}

Estimates on the Willmore energy are related to embeddedness. Indeed, in case of a smooth immersion $\psi\colon\Sigma\to\RR^3$ of a compact surface, if there exists $p\in\RR^3$ such that $\psi^{-1}(p)=\{x_1,\ldots, x_k\}$ (distinct points), then one has the Li-Yau inequality  \cite[Theorem 6]{LiYau:82}  
\begin{equation}\label{rem:LiYau}\widetilde{\Will}(\Sigma)\geq 4\pi k,
\end{equation} 
where $\widetilde{\Will}(\Sigma)=\frac{1}{4}\Will(\Sigma)$ is the Willmore energy \eqref{Willmore} normalized to $\widetilde{\Will}(S^2)=4\pi$. The consequences are as follows (see also \cite{KuwertSchaetzle:12}): 
\begin{enumerate}[(i)]
\item $\widetilde{\Will}(\Sigma)\geq 4\pi$ for embedded surfaces $\Sigma$; equality holds iff $\Sigma=S^{2}$.
\item $\widetilde{\Will}(\Sigma)\geq 8\pi$ for surfaces $\Sigma$ with self-intersections.
\item If $\widetilde{\Will}(\Sigma)< 8\pi$ then $\Sigma$ must be embedded.
\end{enumerate} 		

The next two lemmas exclude that the Cahnam-Helfrich varifold-minimizers collapse to a single point or expand to infinity by estimating their diameter. The statements are a varifold analogue of the diameter bounds given in \cite{Simon:93} for smooth connected hypersurfaces $\Sigma$ in $\RR^n$: If $\d\Sigma=\emptyset$ and $\Sigma$ is compact, then \cite[Lemma 1.1]{Simon:93} shows
$$
\sqrt{\frac{|\Sigma|}{\Will(\Sigma)}}\leq\diam(\Sigma)\leq c\sqrt{|\Sigma|\,\Will(\Sigma)}.
$$

For surfaces in $\RR^3$ it has been proven in \cite[Lemma 1]{Topping:98} that $c=2/\pi$. Moreover, if $\Sigma$ is a smooth connected hypersurface in $\RR^n$ whose boundary $\d\Sigma\neq\emptyset$ has finitely many connected components $\Gamma_i$, then one has the upper bound \cite[Lemma 1.2]{Simon:93}
$$
\diam(\Sigma)\leq C\left(\int_{\Sigma}(\kappa_1^2+\kappa_2^2)^{1/2}+\sum_{i}\diam(\Gamma_i)\right).
$$

\begin{lemma}[Lower diameter bound]\label{lem:lowerdiam}
Assume \eqref{eq:bounds} and let $V\in AV_2^o(\Omega)$ with $\mu_V(\Omega)=m>0$ satisfy $\|\d(q_\sharp V)\|(\Omega\times
G_{2,3})<\infty$ and $\FCH(V)<\infty$. Then, we have that $\diam(\supp\,\mu_V)>0$.
\end{lemma}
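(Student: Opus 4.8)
The plan is to pass to the unoriented curvature varifold $W:=q_\sharp V\in AV_2(\Omega)$, for which $\mu_W=\mu_V$ and $\mu_W(\Omega)=m$. Since $W\in AV_2(\Omega)\sbs IV_2(\Omega)$ is integral, write $W=v(M,\theta)$ with $M\sbs\Omega$ countably $2$-rectifiable, so that $\mu_V=\theta\,\Haus^2\llc M$. The quickest way to the claim is then to rule out concentration on a point: if $\diam(\supp\,\mu_V)=0$ the closed set $\supp\,\mu_V$ is empty or a single point $\{x_0\}$; in the first case $\mu_V\equiv0$, and in the second $\mu_V$ is concentrated on $\{x_0\}$, hence $\mu_V=\mu_V(\{x_0\})\,\delta_{x_0}$ with
\[
\mu_V(\{x_0\})=\int_{M\cap\{x_0\}}\theta\,\dint\Haus^2\leq\theta(x_0)\,\Haus^2(\{x_0\})=0.
\]
Either way $\mu_V(\Omega)=0$, contradicting $\mu_V(\Omega)=m>0$. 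Hence $\supp\,\mu_V$ has at least two distinct points and $\diam(\supp\,\mu_V)>0$. This already is a complete proof of the stated claim.

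The route that uses the remaining hypotheses, and that additionally yields the quantitative Simon-type bound $\diam(\supp\,\mu_V)^2\gtrsim m/\Will(V)$ in the spirit of \cite{Simon:93}, proceeds via the monotonicity formula. First one records the regularity of $W$: by \eqref{eq:bounds} and \eqref{eq:estimate} one has $A^W\in L^2_W$, hence $\bar H^W\in L^2_W$ by \eqref{eq:HA}, and $\Will(W)=\Will(V)<\infty$ by Lemma \ref{lem:Willmore}; together with $\|\d W\|(\Omega\times G_{2,3})<\infty$ and the first variation formula \eqref{eq:Mantegazzaformula}, $\delta W$ extends to a finite $\RR^3$-valued Radon measure (see also \eqref{eq:firstvarbound}). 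Fix $x_0\in\supp\,\mu_V$ and set $d:=\diam(\supp\,\mu_V)$; since $\supp\,\mu_V\sbs K\subset\subset\Omega$ is compact, $d<\infty$, $\supp\,\mu_V\sbs\overline{B_d(x_0)}$, and $\mu_W(B_{2d}(x_0))=m$, with all the balls involved staying well inside $\Omega$ whenever $d$ is small (which is the only case of interest). The monotonicity formula for integral $2$-varifolds with $L^2$ generalized mean curvature and locally bounded first variation then controls $\sigma^{-2}\mu_W(B_\sigma(x_0))$, for $0<\sigma<\rho$, by $\rho^{-2}\mu_W(B_\rho(x_0))$ plus contributions bounded by $\Will(W)$ and by the boundary measure of $W$. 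Letting $\rho\to\infty$ (admissible since $\mu_W$ is finite and compactly supported) and reading off the inequality at $\sigma=2d$ gives $m=\mu_W(B_{2d}(x_0))\leq C\,d^{2}\big(\Will(V)+\text{(boundary terms)}\big)$, whence $d^{2}\geq m/\big(C(\Will(V)+\dots)\big)>0$.

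The delicate point of this second argument is precisely the boundary contribution: only $\|\d W\|(\Omega\times G_{2,3})<\infty$ is assumed, not that it be small, so the monotonicity identity carries a boundary term involving $\pi_\sharp(\d W)$ that must be absorbed. Here one uses $\|\pi_\sharp(\d W)\|(\Omega)\leq\|\d W\|(\Omega\times G_{2,3})$ together with the observation that, were $d$ small, the whole of $\supp\,\mu_V$ — and with it the relevant part of the boundary — is confined to a ball of radius comparable to $d$; the criticality of $L^2$ mean curvature in dimension $2$ also forces the monotonicity formula to be used in its sharp form. For the bare statement $\diam(\supp\,\mu_V)>0$ none of this is required, the first paragraph being sufficient.
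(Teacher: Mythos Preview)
Your first paragraph is a correct and complete proof of the bare statement $\diam(\supp\,\mu_V)>0$: since $V\in AV_2^o(\Omega)\sbs IV_2^o(\Omega)$, the weight $\mu_V=(\theta_++\theta_-)\,\Haus^2\llc M$ is absolutely continuous with respect to $\Haus^2$, hence cannot be supported on a single point while having positive total mass. This route is more elementary than the paper's and, as you note yourself, does not use the hypotheses $\|\d(q_\sharp V)\|<\infty$ and $\FCH(V)<\infty$ at all.

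The paper, however, takes a different and more quantitative path: it tests the first variation formula \eqref{eq:Mantegazzaformula} with a vector field $X$ equal to the identity on $\supp\,\mu_V$ and bounded by a constant times the diameter. Since $\div^PX=2$ on the support, one obtains
\[
2\,\mu_V(\Omega)\leq c\,\diam(\supp\,\mu_V)\left(\sqrt{\mu_V(\Omega)\,\Will(V)}+\|\d(q_\sharp V)\|(\Omega\times G_{2,3})\right),
\]
hence the explicit lower bound $\diam(\supp\,\mu_V)\geq m\big/\!\left(\sqrt{m\,\Will(V)}+\|\d(q_\sharp V)\|\right)$. This is the varifold analogue of Simon's bound and is exactly what the extra hypotheses buy. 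Your second sketch aims at the same quantitative control via the monotonicity formula, but this is heavier machinery and your handling of the boundary term and of the passage $\rho\to\infty$ (you invoke $\supp\,\mu_V\sbs K\subset\subset\Omega$, which is not among the hypotheses of the lemma) remains imprecise; the paper's direct first-variation computation is both shorter and cleaner for this purpose.
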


\begin{proof}
We argue along the lines of \cite[Theorem 3.1.1]{Wojtowytsch:17thesis} but generalize the proof to varifolds with nonzero Mantegazza boundary
measure. With no loss of generality, let $0\in\supp\,\mu_V$ and consider $X\in C_c^1(\Omega;\RR^3)$ with 
$$
X(x)=x\quad\text{on}\quad \supp\,\mu_V \quad\text{and} \quad\|X\|_{L^\infty(\Omega)}=\sup_{x\in\Omega}|X(x)|\leq c\,\diam(\supp\,\mu_V)
$$
for some $c>1$. Then, for all (projections on) planes $P\in G_{2,3}$ and $x\in\supp\,\mu_V$,
$$
\div^PX(x)=P_{ij}\d_jX_i(x)=2.
$$ 
Consequently, Mantegazza's first variation formula \eqref{eq:Mantegazzaformula} gives (here we write $V$ instead of $q_\sharp V=v(M,\theta_++\theta_-)$ to shorten the expressions)
\begin{align*}
2\,\mu_V(\Omega)&=\int_{\Omega}\div^{P(x)}X(x)\dint\mu_V(x)\\
&=\int_{\Omega}\lara{X(x),\bar H^V(x,P(x))}\dint\mu_V(x)
-\int_{{\Omega}\times G_{2,3}}\lara{X(x),\dint(\d V)(x,P)}\\
&\leq\left(\int_{\Omega}|X|^2\dint\mu_V\right)^{1/2}\left(\int_{\Omega}|\bar H^V(\cdot,P)|^2\dint\mu_V\right)^{1/2}
+\int_{{\Omega}\times G_{2,3}}|\lara{X,\dint(\d V)}|\\
&\leq\|X\|_{L^\infty({\Omega})}\left(\int_{\Omega}\dint\mu_V\right)^{1/2}\left(\int_{{\Omega}\times G_{2,3}}|\bar H^V|^2\dint V\right)^{1/2}
+\|X\|_{L^\infty({\Omega})}\int_{{\Omega}\times G_{2,3}}\dint\|\d V\|\\
&\leq c\,\diam(\supp\,\mu_V)\left(\left(\mu_V({\Omega})\int_{{\Omega}\times G_{2,3}}|\bar H^V|^2\dint V\right)^{1/2}
+\|\d V\|(\Omega\times G_{2,3})\right).
\end{align*}
Hence
$$
2\mu_V(\Omega)\leq c\,\diam(\supp\,\mu_V)\left(\sqrt{\mu_V(\Omega)\,\Will(V)}
+\|\d V\|(\Omega\times G_{2,3})\right).
$$
As $\mu_V(\Omega)=m>0$, $\|\d V\|(\Omega\times G_{2,3})<\infty$, and $\Will(V)<\infty$ (see Lemma \ref{lem:Willmore}) one has the lower bound
$$
0<\frac{\mu_V(\Omega)}{\sqrt{\mu_V(\Omega)\,\Will(V)}+\|\d V\|(\Omega\times G_{2,3})}\leq\diam(\supp\,\mu_V)
$$
and the statement follows. 
\end{proof}

Note that the statement of Lemma \ref{lem:lowerdiam} remains true if the condition $\mu_V(\Omega)=m$ is replaced by  $\mu_{V}(\Omega)>0$. 

\begin{lemma}[Upper diameter bound]\label{lem:lowerdiam2}
Let $\Omega=\RR^3$ and, in addition to the assumptions of Lemma \emph{\ref{lem:lowerdiam}}, assume $\d(q_\sharp V)=0$. Then  $\diam(\supp\,\mu_V)<\infty$.
\end{lemma}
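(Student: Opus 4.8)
The plan is to obtain boundedness of $\supp\,\mu_V$ from two finiteness properties at once, namely finite total mass $\mu_V(\RR^3)=m$ and finite Willmore energy, via the varifold monotonicity formula; connectedness plays no role here (and in fact no quantitative bound of Simon's type $\diam(\supp\,\mu_V)\le C\sqrt{m\,\Will(V)}$ can hold in this generality, as two unit spheres at large mutual distance show — yet each such configuration still has finite diameter, which is all that is claimed). First I would reduce to an Allard-type statement. Put $W:=q_\sharp V\in AV_2(\RR^3)$. By Lemma \ref{lem:Willmore}, the assumptions $\FCH(V)<\infty$ and $\mu_V(\RR^3)=m<\infty$ give $\Will(V)=\Will(W)=\int_{\RR^3\times G_{2,3}}|\bar H^W|^2\dint W<\infty$. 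Since $\d W=0$, the first variation formula \eqref{eq:Mantegazzaformula} collapses to $\delta W(X)=-\int_{\RR^3\times G_{2,3}}\lara{\bar H^W,X}\dint W$, so $W$ is an integral $2$-varifold in $\RR^3$ with vanishing Allard boundary and generalized mean curvature $\bar H^W\in L^2_W(\RR^3\times G_{2,3})$; moreover $\mu_W=\mu_V$, hence $\supp\,\mu_W=\supp\,\mu_V$ and $\mu_W(\RR^3)=m$.

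Next I would invoke the monotonicity formula for integral $2$-varifolds with $L^2$ generalized mean curvature and vanishing boundary (Simon \cite{Simon:93}, Kuwert--Sch\"atzle \cite{KuwertSchaetzle:12}): there is a universal constant $C_0>0$ such that, for every $x_0\in\supp\,\mu_W$ and every $\rho>0$,
\[
\pi\le C_0\left(\frac{\mu_W(B_\rho(x_0))}{\rho^2}+\int_{B_\rho(x_0)}|\bar H^W|^2\dint\mu_W\right).
\]
This follows by letting the inner radius tend to $0$ in the almost-monotonicity of $\sigma\mapsto\sigma^{-2}\mu_W(B_\sigma(x_0))$ (up to lower-order terms controlled by the local Willmore energy), together with the standard fact that an integral varifold of locally bounded first variation has $2$-density at least $1$ at every point of its support.

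With this at hand, suppose for contradiction that $\supp\,\mu_W$ is unbounded. Being closed and unbounded, it allows a greedy choice of points $x_1,x_2,\dots\in\supp\,\mu_W$ with $|x_j-x_k|\ge 2$ for $j\neq k$: having picked $x_1,\dots,x_k$, the set $\bigcup_{j\le k}B_2(x_j)$ is bounded, so $\supp\,\mu_W$ minus it is still unbounded and yields $x_{k+1}$. The balls $B_1(x_k)$ are then pairwise disjoint, and the displayed inequality with $\rho=1$ gives $\mu_W(B_1(x_k))+\int_{B_1(x_k)}|\bar H^W|^2\dint\mu_W\ge\pi/C_0$ for every $k$; summing over $k$ and using disjointness,
\[
+\infty=\sum_k\frac{\pi}{C_0}\le \mu_W(\RR^3)+\int_{\RR^3}|\bar H^W|^2\dint\mu_W=m+\Will(V)<+\infty,
\]
a contradiction. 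Therefore $\supp\,\mu_V=\supp\,\mu_W$ is bounded, i.e.\ $\diam(\supp\,\mu_V)<\infty$. The main obstacle is not in the argument itself but in correctly identifying that the hypothesis $\d(q_\sharp V)=0$ really places $q_\sharp V$ in the scope of the varifold monotonicity formula — which is exactly what \eqref{eq:Mantegazzaformula} and Lemma \ref{lem:Willmore} provide; the density lower bound on the support is a routine consequence of monotonicity, and the greedy selection and summation are elementary.
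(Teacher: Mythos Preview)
Your argument is correct. The paper's own proof is essentially a one-line citation of \cite[Lemma~1.1]{Simon:93}, \cite[Lemma~1]{Topping:98}, and their varifold version \cite[Theorem~3.1.1]{Wojtowytsch:17thesis}, which together yield the \emph{quantitative} Simon bound $\diam(\supp\,\mu_V)\le c\sqrt{\mu_V(\Omega)\,\Will(V)}$ for \emph{connected} boundaryless varifolds with $L^2$ mean curvature. You take a different route: rather than invoking that quantitative inequality, you extract from the monotonicity formula only the uniform density lower bound at every support point, and then run a disjoint-balls/pigeonhole argument against the finite global mass plus Willmore energy. This is more self-contained and, as you point out, does not rely on connectedness of $\supp\,\mu_V$ --- a hypothesis that is neither stated in the lemma nor verified for the minimizers of Theorems~\ref{thm:single} and~\ref{thm:multi}, and which your two-spheres example shows is genuinely needed for the quantitative Simon--Topping estimate. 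The trade-off is that you only obtain boundedness and no explicit upper bound in terms of $m$ and $\Will(V)$; but since the lemma merely asserts $\diam(\supp\,\mu_V)<\infty$, that is precisely what is required. Your identification of the key reduction --- that $\d(q_\sharp V)=0$ together with \eqref{eq:Mantegazzaformula} and Lemma~\ref{lem:Willmore} places $q_\sharp V$ squarely in the scope of the varifold monotonicity formula --- is exactly right.
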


\begin{proof}
The upper diameter bound follows from \cite[Lemma 1.1]{Simon:93} or \cite[Lemma 1]{Topping:98} for smooth surfaces, which can be generalized to the varifold setting \cite[Theorem 3.1.1]{Wojtowytsch:17thesis}.
\end{proof}

The latter result is based on a refined version of Li-Yau inequality \eqref{rem:LiYau}, which seems not to hold for varifolds with boundary. Consequently, as it stands, the upper bound only directly applies to minimizers $V$ of the single-phase problem, but not to the individual phases $V^i$ in the multiphase case. However, by formula \eqref{eq:sum_bdry} and Theorem \ref{thm:multi} the sum $\sum_{i=1}^N q_\sharp V^i$ (corresponding to the union of sets) is a varifold without boundary, for which the upper diameter bound holds. A fortiori, also the individual phases have bounded diameter. 

The upper diameter bound eventually allows us to remove the constraint $\supp\,\mu_V\sbs K$ for a given compact $K\sbs\Omega$ from the definition of admissible varifolds $V\in\cA$ for Canham-Helfrich minimizers,  see \eqref{eq:admissible}. 	

\section*{Acknowledgments}
The authors gratefully acknowledge some valuable comments by Sascha Eichmann on a former version of the paper.
Luca Lussardi is grateful to the kind hospitality of the University of Vienna where part of the work was performed. This work has been partially supported by the Vienna Science and Technology Fund (WWTF) through the project MA14-009 and by the Austrian Science Fund (FWF) projects F\,65 and W\,1245.

\newcommand{\SortNoop}[1]{}

\bibliographystyle{plain}
\end{document}